\newtheorem{theorem}{Theorem}[section]
\newtheorem*{theorem*}{Theorem}
\newtheorem{lemma}[theorem]{Lemma}
\newtheorem{proposition}[theorem]{Proposition}
\newtheorem{corollary}[theorem]{Corollary}
\theoremstyle{definition}
\newtheorem{definition}[theorem]{Definition}
\newtheorem{example}[theorem]{Example}
\theoremstyle{remark}
\newtheorem{remark}[theorem]{Remark}
\numberwithin{equation}{section}
\newcommand{\C}{\mathds{C}}
\DeclareMathOperator{\id}{id}
\DeclareMathOperator{\Irr}{Irr}
\DeclareMathOperator{\Dil}{\mathfrak{Dil}}
\newcommand{\ev}{\mathrm{ev}}
\newcommand{\qG}{\mathds{G}}
\newcommand{\qB}{\mathds{B}}
\newcommand{\dqG}{\hat{\mathds{G}}}
\newcommand{\Glob}{\mathfrak{G}}
\newcommand{\new}{}
\newcommand{\old}{}
\begin{document}

 \title[Partial actions   of ${C^{*}}$-quantum groups]{Partial actions  of $\bm{C^{*}}$-quantum groups}

\author[F. Kraken, P. Quast, and T. Timmermann]{Franziska Kraken$^{1}$, Paula Quast$^{2}$, and Thomas Timmermann$^{3}$$^{*}$} 

 \email{$^{1}$\textcolor[rgb]{0.00,0.00,0.84}{franzi.kraken@googlemail.com}}
\email{$^{2}$\textcolor[rgb]{0.00,0.00,0.84}{paula.quast@web.de}}

\address{$^{3}$FB Mathematik und Informatik, University of Muenster \\ Einsteinstr.\ 62, 48149
  Muenster, Germany}
 \email{\textcolor[rgb]{0.00,0.00,0.84}{timmermt@math.uni-muenster.de}}

\subjclass[2010]{Primary 46L55; Secondary 16T20.}

\keywords{$C^{*}$-algebra, partial action,  quantum group, Hopf algebra, globalization.}
 
\date{\today
\newline \indent $^{*}$Corresponding author}

  \thanks{Partially supported by the SFB 878
      ``Groups, geometry and actions'' funded by the DFG and by the grant H2020-MSCA-RISE-2015-691246-QUANTUM DYNAMICS}
 
\begin{abstract}
  Partial actions of groups on $C^{*}$-algebras and the closely related actions and coactions of Hopf algebras received much attention over the last decades. They arise naturally as restrictions of their global counterparts to non-invariant subalgebras, and the ambient eveloping global (co)actions have proven useful for the study of associated crossed products. In this article, we introduce the partial coactions of $C^{*}$-bialgebras, focussing on $C^{*}$-quantum groups, and prove existence of an enveloping global coaction under mild technical assumptions.  We also show that partial coactions of the function algebra of a discrete group correspond to partial actions on direct summands of a $C^{*}$-algebra, and relate partial coactions of a compact or its dual discrete $C^{*}$-quantum group to partial coactions or partial actions of the dense Hopf subalgebra. \new As a fundamental example, we associate to every  discrete $C^{*}$-quantum group a quantum Bernoulli shift.\old
\end{abstract}

\maketitle

\section{Introduction}

Partial actions of groups on spaces and on $C^{*}$-algebras were gradually
introduced in \cite{exel:circle}, \cite{exel:twisted}, \cite{mcclanahan}, with
more recent study of associated crossed products  shedding new light on the inner structure of many interesting $C^{*}$-algebras; see \cite{exel:book} for a comprehensive introduction and an overview.  In the purely algebraic setting, the corresponding notion of a partial action or a partial coaction of a Hopf algebra on an algebra was introduced in \cite{caenepeel}.

Naturally, such partial (co)actions arise by restricting global (co)actions to
non-invariant subspaces or ideals, and in these cases, all the tools that are
available for the study of global situation can be applied to the study of the
partial one. Therefore, it is highly desirable to know, given a partial group
action or a partial Hopf algebra (co)action, whether it can be identified with
some restriction of a global one, whether there exists a minimal global one ---
called a \emph{globalization} --- and whether the latter, if it exists, can be
constructed explicitly. For partial actions of groups on locally compact
Hausdorff spaces, such a globalization can always be constructed, but the
underlying space need no longer be Hausdorff \cite{abadie:phd},
\cite{abadie:takai}. As a consequence, partial actions of groups on
$C^{*}$-algebras can not always be identified with the restriction of a global
action \cite{abadie:takai}. In the purely algebraic setting, partial (co)actions
of Hopf algebras always have a globalization \cite{alves:enveloping},
\cite{alves:globalization-co}; see also
\cite{alvares:partial-module-categories}, \cite{alves:globalization-twisted},
\cite{castro:weak-partial}.

In this article, we introduce partial coactions of $C^{*}$-bialgebras, in particular, of $C^{*}$-quantum groups, on $C^{*}$-algebras, and relate them to the partial (co)actions discussed above.  In case of the function algebra of a discrete group, partial coactions correspond to partial actions of groups where for every group element, the associated domain of definition is a direct summand of the total $C^{*}$-algebra, and these are precisely the partial actions for which existence of a globalization can be proven. If the $C^{*}$-bialgebra is a discrete $C^{*}$-quantum group, then every partial coaction gives rise to a partial action of the Hopf algebra of matrix coefficients of the dual compact quantum group.  Finally, in case of a compact $C^{*}$-quantum group, partial coactions restrict, under a natural condition, to partial coactions of the Hopf algebra of matrix coefficients on a dense subalgebra.

\new

Partial coactions appear naturally as restrictions of ordinary coactions to ideals or, more generally, to  $C^{*}$-subalgebras that are weakly invariant in a suitable sense. An identification of a partial coaction with such a restriction will be called a dilation of the partial coaction.  
The main result of this article is the existence and a construction of a  minimal dilation, also called a globalization,  under mild assumptions.  We follow the approach for coactions of Hopf algebras \cite{alves:globalization-co}, but face new technical difficulties. To deal with these, we assume that the $C^{*}$-algebra of the quantum group under consideration has the slice map property, which follows, for example, from nuclearity \cite{wassermann:slice}, and is automatic if the quantum group is discrete.  
Briefly, the main result can be summarised as follows.
\begin{theorem*}
  Let $(A,\Delta)$ be a $C^{*}$-quantum group, where $A$ has the slice map property. Then every injective, weakly continuous, regular partial coaction  of $(A,\Delta)$ has a minimal dilation and the latter is unique up tio isomorphism.
\end{theorem*}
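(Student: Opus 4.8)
The plan is to construct the minimal dilation explicitly inside the multiplier algebra $M(B \otimes A)$, imitating the purely algebraic enveloping construction of \cite{alves:globalization-co} but replacing Sweedler-type manipulations by slice-map arguments. Write $\delta \colon B \to M(B \otimes A)$ for the given partial coaction and set $\gamma := \id_B \otimes \Delta$, which is a genuine coaction of $(A,\Delta)$ on $B \otimes A$. Since $\delta$ is a $*$-homomorphism, $\delta(B)$ is a $C^{*}$-subalgebra of $M(B \otimes A)$, and I would take the candidate global algebra to be
\[
  C := \overline{[\delta(B)(1 \otimes A)]} \subseteq M(B \otimes A),
\]
the closed linear span. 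The \emph{regularity} assumption is exactly what should guarantee that this span is already a $C^{*}$-subalgebra (closed under multiplication and adjoint), by forcing the two Podle\'s-type spans $\overline{[\delta(B)(1\otimes A)]}$ and $\overline{[(1\otimes A)\delta(B)]}$ to agree; this is the $C^{*}$-analogue of the algebraic fact that the enveloping comodule algebra is spanned by the elements $\delta(b)(1\otimes a)$. Using the partial-coaction axiom one checks directly that $C$ is $\gamma$-invariant: applying $\id_B\otimes\Delta$ to a generator produces, via the rewriting of $(\id_B\otimes\Delta)\delta(b)$ in terms of $(\delta\otimes\id)\delta(b)$, an element whose first leg again lies in $\delta(B)$.

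Next I would verify that $\gamma$ restricts to a genuine coaction on $C$. Coassociativity is inherited for free from that of $\Delta$, so the two substantive points are (i) $\gamma(C) \subseteq M(C \otimes A)$ and (ii) the Podle\'s density condition $\overline{[\gamma(C)(1\otimes A)]} = C \otimes A$. For (i), on generators one computes $\gamma\big(\delta(b)(1\otimes a)\big) = (\id_B \otimes \Delta)(\delta(b))\,(1 \otimes \Delta(a))$, and the partial-coaction axiom places the first factor in $M(C\otimes A)$ (for nonunital $B$ one works with the strictly continuous extension of $\delta$ to $M(B)$ and the projection $\delta(1)$); weak continuity of $\delta$ is what lets me pass from these identities on a dense subset to the norm-closed algebra $C$. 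This is the step where injectivity and weak continuity of $\delta$ are genuinely used, the latter to keep the various extensions strictly continuous.

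The main obstacle will be (ii) together with making (i) rigorous at the level of multiplier algebras, and this is precisely where the \emph{slice map property} of $A$ is indispensable. In the algebraic case one certifies membership in a subcomodule algebra element-wise; here I have only the closed span $C$, and to show that an element $x \in M(B\otimes A\otimes A)$ in fact lies in $M(C\otimes A)$ I would slice the last leg, forming $(\id_{B\otimes A}\otimes\omega)(x)$ for $\omega \in A^{*}$, and invoke the slice map property to deduce $x \in M(C\otimes A)$ from the fact that all these slices lie in $C$. The same technique, applied to the inclusion $C \subseteq B\otimes A$, should yield the density condition (ii) from the Podle\'s condition $\overline{[\Delta(A)(1\otimes A)]}=A\otimes A$ for the quantum group itself. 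I expect this verification---bridging the gap between the purely algebraic identities that the partial-coaction axioms supply and the norm-closed, multiplier-level statements required of a genuine coaction---to be the technical heart of the argument, and the reason nuclearity (or discreteness) of $A$ is assumed.

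Finally, for minimality and uniqueness I would argue as follows. By construction $C$ is generated by $\delta(B)$ and $1\otimes A$ under $\gamma$, so it is the smallest $\gamma$-invariant $C^{*}$-subalgebra of any dilation containing the image of $B$; this is minimality, and it simultaneously exhibits $B \cong \delta(B)$ as a weakly invariant subalgebra whose restricted coaction is $\delta$, the identification $\gamma|_{\delta(B)} = \delta$ following from the partial-coaction axiom (the correction term by $\delta(1)$ being absorbed upon multiplying by the projection $\delta(1)$). For uniqueness, given two minimal dilations $(C_1,\gamma_1)$ and $(C_2,\gamma_2)$ of $(B,\delta)$, each $C_i$ is the closed linear span of the elements $\gamma_i(\delta(b))(1\otimes a)$, and the relations among these are dictated solely by $\delta$ and $\Delta$; hence the assignment on generators extends to a $*$-isomorphism $C_1 \to C_2$ intertwining the coactions and fixing $B$. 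Checking that this assignment is isometric and well defined is routine once the slice-map machinery of the existence part is in place.
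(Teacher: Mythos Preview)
Your candidate algebra is the wrong object, and the mistake traces to a misreading of the partial-coaction axiom. The axiom says $(\delta\otimes\id_A)\delta(b)=(\delta(1_B)\otimes 1_A)\cdot(\id_B\otimes\Delta)\delta(b)$: one obtains $(\delta\otimes\id)\delta(b)$ from $(\id\otimes\Delta)\delta(b)$ by \emph{cutting down} with the projection $\delta(1_B)\otimes 1_A$, not the other way around. Hence the slices $(\id_B\otimes\id_A\otimes\omega)(\id_B\otimes\Delta)\delta(b)$ are genuinely new elements of $M(B\otimes A)$ that need not lie in $[\delta(B)(1\otimes A)]$; your $C$ is therefore not $\gamma$-invariant, and the whole point of the globalization is to adjoin precisely these elements. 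In the paper the minimal dilation $\Glob(B)$ is taken to be the $C^{*}$-subalgebra of $M(B\otimes A)$ generated by $\delta(B)$ together with all such slices (equivalently, by $\delta(B)$ and $A^{*}\triangleright\delta(B)$ for the coaction $\id_B\otimes\Delta$). Your reading of ``regularity'' is also off: in the paper it is the condition $(\id_B\otimes\Delta)(\delta(B))\cdot(1\otimes 1\otimes A)\subseteq M(B\otimes A)\otimes A$, which guarantees that these slices make sense as multipliers, not that $[\delta(B)(1\otimes A)]=[(1\otimes A)\delta(B)]$.

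Your uniqueness argument is also incomplete. Saying that ``the relations among the generators are dictated solely by $\delta$ and $\Delta$'' does not by itself yield an isometric isomorphism between two minimal dilations; one has to prove that the canonical surjection from an arbitrary minimal dilation onto $\Glob(B)$ (given by $b\mapsto\delta_D(b)|_{\iota(B)\otimes A}$) is injective. The paper accomplishes this using the multiplicative unitary $W$ of the $C^{*}$-quantum group: one first shows the auxiliary fact that for any coaction $\delta_D$ one has $\delta_D(d)(d'\otimes 1)=0$ if and only if $(d\otimes 1)\delta_D(d')=0$ (both being equivalent to $\delta_D(d)(1\otimes\hat A)\delta_D(d')=0$), and then combines this with minimality to conclude that no nonzero element of $D$ can be killed by restriction to $\iota(B)\otimes A$. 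This is where the hypothesis that $(A,\Delta)$ is a $C^{*}$-quantum group, rather than merely a $C^{*}$-bialgebra, is genuinely used.
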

   Presently, we do not see whether this slice map assumption is just convenient or genuinely necessary.  

\old

Parts of the results in this article were obtained in the Master's theses of the first and the second author.  In following articles, we plan to study crossed products for partial coactions, and partial corepresentations of $C^{*}$-bialgebras.

The article is organized as follows.  In Section \ref{sec:prelim}, we recall background on $C^{*}$-quantum groups, strict $*$-homo\-morphisms and the slice map property.  In Section \ref{sec:partial}, we introduce partial coactions of $C^{*}$-bialgebras and discuss a few desirable properties like weak and strong continuity.  In Section \ref{sec:groups}, we show that partial actions of a discrete group $\Gamma$ on a $C^{*}$-algebra correspond to counital partial coactions of the function algebra $C_{0}(\Gamma)$ if and only if the domains of definition are direct summands of the $C^{*}$-algebra.  In Section \ref{sec:hopf}, we relate partial coactions of compact and of discrete $C^{*}$-quantum groups to coactions and actions of the Hopf algebra of matrix elements of the compact quantum group.  In Section \ref{sec:restriction-morphisms}, we show how partial coactions arise from global ones by restriction, and discuss the closely related notion of weak or strong morphisms between partial coactions. \new In Section \ref{section:bernoulli}, we construct for every discrete quantum group a quantum a quantum Bernoulli shift and obtain, by restriction, a partial coaction that is initial in a suitable sense. \old In Section \ref{sec:dilation}, we consider the situation where a partial coaction can be identified with the restriction of a global coaction, and study a few preliminary properties of such identifications.  Finally, in Section \ref{sec:minimal}, we prove the main result stated above.

\section{Preliminaries} \label{sec:prelim}

Let us fix some notation and recall some background.
\subsection*{Conventions and notation} 
Given a locally compact Hausdorff space $X$, we denote by $C_{b}(X)$ and $C_{0}(X)$ the $C^{*}$-algebra of continuous functions that are bounded or vanish at infinity, respectively.

For a subset $F$ of a normed space $E$, we denote by $[F] \subseteq E$ its closed linear span. 

Given a $C^{*}$-algebra $A$, we denote by $A^{*}$ the space of bounded linear functionals on $A$, by $M(A)$ the multiplier algebra and by $1_{A} \in M(A)$ the unit of $M(A)$.  

Given a  Hilbert space $K$, we denote by $1_{K}$ the identity on $H$.

Let $A$ and $B$ be $C^{*}$-algebras. A $*$-homomorphism $\varphi\colon A\to M(B)$ is called \emph{nondegenerate} if $[\varphi(A) B]=B$. Each nondegenerate $*$-homomorphism $\varphi\colon A\to M(B)$ extends uniquely to a unital $*$-homomorphism from $M(A)$ to $M(B)$, which we denote by $\phi$ again. By a \emph{representation} of a $C^{*}$-algebra $A$ on a Hilbert space $H$ we mean a $*$-homomorphism  $\pi \colon A \to \mathcal{B}(H)$. All tensor products of $C^{*}$-algebras will be minimal ones. 

We write $\sigma$ for the tensor flip isomorphism $A\otimes B\to B\otimes A$,  $a \otimes b \mapsto b\otimes a$.

\subsection*{$C^{*}$-bialgebras and $C^{*}$-quantum groups} 

A \emph{$C^{*}$-bialgebra} is a $C^{*}$-algebra $A$ with a non-degenerate $*$-homomorphism $\Delta\colon A \to M(A\otimes A)$, called the \emph{comultiplication}, that is coassociative in the sense that $(\Delta \otimes \id_{A})\circ \Delta = (\id_{A} \otimes \Delta) \circ \Delta$. It satisfies the \emph{cancellation conditions} if
\begin{equation}
  \label{eq:Cancellations}
  [\Delta(A)(1_A\otimes A)] = A\otimes A       = [(A\otimes 1_A)\Delta(A)].
\end{equation}

Given a $C^{*}$-bialgebra $(A,\Delta)$, the dual space $A^{*}$ is an algebra with respect to the convolution product defined by $\upsilon \omega:=(\upsilon \otimes \omega)\circ \Delta$.

A \emph{counit} for a $C^{*}$-bialgebra $(A,\Delta)$ is a character $\varepsilon$  on $A$ satisfying $(\varepsilon \otimes \id_{A})\circ \Delta = \id_{A} = (\id_{A} \otimes \varepsilon)\circ \Delta$. If it exists, such a counit is a unit in the algebra $A^{*}$ and thus unique.

A \emph{morphism} of $C^{*}$-bialgebras $(A,\Delta_{A})$ and $(B,\Delta_{B})$ is a non-degenerate $*$-homo\-morphism $f\colon A \to M(B)$ satisfying $\Delta_{B} \circ f = (f\otimes f)\circ \Delta_{A}$.

A $C^{*}$-quantum group is a $C^{*}$-bialgebra that arises from a well-behaved multiplicative unitary as follows \cite{woron:mu-qg-2,soltan:2,woron:mu-qg}.  Suppose that $H$ is a Hilbert space and that $W \in \mathcal{B}(H \otimes H)$ is a multiplicative unitary \cite{baaj:mu} that is manageable or modular \cite{woron:mu-qg,soltan:2}.  Then the spaces
 \begin{align*} 
   A & :=[(\omega\otimes\id_{H})W : \omega\in\mathcal{B}(H)_*] &&\text{and} & \hat {A} &:= [(\id_{H}\otimes\omega)W : \omega\in\mathcal{B}(H)_*]
  \end{align*}
  are separable, nondegenerate $C^{*}$-subalgebras of $\mathcal{B}(H)$, the unitary $W$ is a multiplier of $\hat{A} \otimes A \subseteq \mathcal{B}(H\otimes H)$, and the formulas
  \begin{align} \label{eq:comult} \Delta(a) &= W(a\otimes 1_{H})W^{*}, & \hat \Delta(\hat a) &= \sigma(W^{*}(1_{H} \otimes \hat a)W)
\end{align}
define comultiplications on $A$ and $\hat{A}$, respectively, such that $(A,\Delta)$ and $(\hat{A},\hat{\Delta})$ become $C^{*}$-bialgebras.  
A $C^{*}$-bialgebra $(A,\Delta)$ is a \emph{$C^{*}$-quantum group} if it arises from a modular multiplicative unitary $W$ as above. 

Let $(A,\Delta)$ be a $C^{*}$-quantum group arising from a unitary $W$ as above.  Denote by $\Sigma$ the flip on $H\otimes H$. Then also the \emph{dual} $\widehat{W}:=\Sigma W^{*} \Sigma$ of $W$ is a modular or manageable multiplicative unitary and the associated $C^{*}$-quantum group is $(\hat{A},\hat{\Delta})$. The latter only depends on $(A,\Delta)$ and not on the choice of $W$, and is called the \emph{dual} of $(A,\Delta)$.  The images of $W$ and $\widehat{W}$ in $M(\hat{A} \otimes A)$ or $M(A \otimes \hat{A})$, respectively, do not depend on the choice of $W$ but only on $(A,\Delta)$. We call them the \emph{reduced bicharacters} of $(A,\Delta)$ and $(\hat{A},\hat{\Delta})$ and denote them by $W^{A}$ and $\widehat{W}^{A}$, respectively. We will need an \emph{anti-Heisenberg pair} for $(A,\Delta)$, which consists of non-degenerate, faithful representations $\pi$ of $A$ and $\hat\pi$ of $\hat{A}$ on a Hilbert space $K$ such that the unitary
 \begin{align}
   \label{eq:right-unitary}
   V:=(\id_{A} \otimes \hat{\pi})(\widehat{W}^{A}) \in M(A \otimes \hat{\pi}(\hat{A})),
 \end{align}
regarded as an element of $M(A \otimes \mathcal{K}(K))$, satisfies 
  \begin{align} \label{eq:anti-heisenberg}
    V(1_{A} \otimes \pi(a)) V^{*} &= (\id_{A} \otimes \pi)\Delta(a) \text{ for all } a\in A;
  \end{align}
see \cite[\S 3]{woron:twisted} and \cite[\S 3.1]{timmermann:maximal} .

Every locally compact quantum group or, more precisely, every reduced $C^{*}$-algebraic quantum group in the sense of Kustermans and Vaes \cite{vaes:1}, is a $C^{*}$-quantum group.

\new

We shall use  regularity of $C^{*}$-quantum groups, which was studied for multiplicative unitaries  in \cite{baaj:mu} and   for  reduced $C^{*}$-algebraic quantum groups  in  \cite[\S 5(b)]{baaj:mu-semi-regular}. We  follow the approach of  \cite[Definition 5.37]{roy:thesis} and  call a $C^{*}$-quantum group $(A,\Delta)$ \emph{regular}  if its reduced bicharacter satisfies $[(\hat{A} \otimes 1_{A})W^{A} (1_{\hat{A}}\otimes A)] = \hat{A} \otimes A$ in $M(\hat{A} \otimes A)$.  This is equivalent to the condition $[ (1_{\hat{A}}\otimes A)W^{A}(\hat{A} \otimes 1_{A})] = \hat{A} \otimes A$, see \cite[proof of Corollary 5.39]{roy:thesis}. For the unitary \eqref{eq:right-unitary}, this translates into
\begin{align} \label{eq:weakly-regular}
  [(1_{A} \otimes \hat{\pi}(\hat{A}))V(A \otimes 1_{\hat{\pi}(\hat{A})})] = A \otimes \hat{\pi}(\hat{A}) \quad \text{in } M(A \otimes \hat{\pi}(\hat{A})).
\end{align}
In \cite{roy:thesis}, this condition is referred to as \emph{weak regularity}. However, every reduced $C^{*}$-algebraic quantum $(A,\Delta)$ is regular in the sense above if and only if it is regular in the sense of \cite[\S 5(b)]{baaj:mu-semi-regular}. One implication is contained in \cite[Proposition 3.6]{baaj:mu}, and the other follows easily from \cite[Proposition 5.6]{baaj:mu-semi-regular}.

\old

A \emph{compact $C^{*}$-quantum group} is, by definition, a unital $C^{*}$-bialgebra $\qG=(A,\Delta)$ that satisfies the cancellation conditions, and is indeed a weakly regular $C^{*}$-quantum group \cite{woronowicz}.  Associated to such a compact quantum group is a rigid $C^{*}$-tensor category of unitary finite-dimensional corepresentations \cite{neshveyev:book}. We denote by $\Irr(\qG)$ the equivalence classes of irreducible corepresentations. Their matrix elements span a dense Hopf subalgebra $\mathcal{O}(\qG)$.  The dual $(\hat{A},\hat{\Delta})$ is called a \emph{discrete $C^{*}$-quantum group}, and the underlying $C^{*}$-algebra $\hat{A}$ is a direct sum of matrix algebras, indexed by $\Irr(\qG)$. \new We also denote the underlying $C^{*}$-algebra $\hat{A}$ of $\dqG$ by $C_{0}(\dqG)$. \old

\subsection*{Strict $*$-homomorphisms of $C^{*}$-algebras}
Recall from \cite[\S 5, Corollary 5.7]{lance} that a $*$-homomorphism $\pi\colon B \to M(C)$ is \emph{strict} if it is strictly continuous on the unit ball, and that in that case, it extends to a $*$-homomorphism $M(B) \to M(C)$ that is strictly continuous on the unit ball.  We denote this extension by $\pi$ again. Using this extension, we define the composition of strict $*$-homomorphisms, which evidently is strict again. Hence, $C^{*}$-algebras with strict $*$-homomorphisms form a category.

Recall that a \emph{corner} of a $C^{*}$-algebra $B$ is a $C^{*}$-subalgebra of the form $pBp$ for some projection $p\in M(B)$. 

Strict $*$-homomorphisms are just non-degenerate $*$-homomorphisms in the usual sense from the domain to a corner of the target. Indeed, if $\pi \colon B \to M(C)$ is a strict $*$-homomorphism, then $p:=\pi(1_{B}) \in M(C)$ is a projection, $pCp \subseteq C$ is a corner, and the co-restriction $\pi\colon B \to M(pCp)$ is non-degenerate. Conversely, given a corner $C_{0} \subseteq C$ and a non-degenerate $*$-homomorphism $\pi \colon B \to M(C_{0})$, we get a strict extension $M(B) \to M(C_{0})$, a natural strict map $M(C_{0}) \to M(C)$ \cite[II.7.3.14]{blackadar:opalgs}, and the composition is a strict $*$-homomorphism.

This description of strict $*$-homomorphisms immediately implies that  the minimal tensor product of strict morphisms is a strict morphism again, and that an embedding of $C^{*}$-algebras $B \hookrightarrow C$ is a  strict $*$-homomorphism if and only if  $B$ is a non-degenerate $C^{*}$-subalgebra of a corner of $C$. We shall call such embeddings \emph{strict}.

In the commutative case, partial morphisms correspond to partially defined continuous maps with clopen domain of definition. Indeed, let $X$ and $Y$ be locally compact Hausdorff spaces.  Then every continuous map $F$ from a clopen subset $D \subseteq Y$ to $X$ induces a strict $*$-homomorphism $F^{*} \colon C_{0}(X) \to M(C_{0}(Y))=C_{b}(Y)$ defined by
  \begin{align*}
    (F^{*}(f))(y) = 0 \text{ if } y\not\in D, \quad (F^{*}(f))(y) = f(F(y)) \text{ if  } y \in D.
  \end{align*}
 Conversely, if $\pi \colon C_{0}(X) \to M(C_{0}(Y))$ is a strict $*$-homomorphism, then $\pi(1_{X})$ is the characteristic function of a clopen subset $D \subseteq Y$ and the corestriction $\pi \colon C_{0}(X) \to M(C_{0}(D))$ is the pull-back along a continuous function $F\colon D \to X$.

 \subsection{The slice map property} \label{subsection:slice} In sections \ref{sec:dilation} and \ref{sec:minimal}, we  need the following property.  
A $C^{*}$-algebra $A$ has the \emph{slice map property} if for every $C^{*}$-algebra $B$ and every $C^{*}$-subalgebra $C \subseteq B$, every $x\in B\otimes A$ satisfying $(\id \otimes \omega)(x) \in C$ for all $\omega \in A^{*}$ lies in $C \otimes A$ \cite{wassermann:slice}. This property holds if $A$ is nuclear, or, more generally, if $A$ has the completely bounded approximation property or the strong operator approximation property; see \cite{zacharias:fubini} for a survey. In particular, this condition holds whenever $(A,\Delta)$ is a discrete quantum group, or, more generally, whenever $(A,\Delta)$ is a reduced $C^{*}$-algebraic quantum group whose dual is amenable \cite[Theorem 3.3]{bedos:amenable}.

\section{Partial coactions of $C^{*}$-bialgebras} \label{sec:partial}

The definition of a partial coaction given for Hopf algebras in  \cite{caenepeel:partial} carries over to $C^{*}$-bialgebras as follows.
\begin{definition} \label{definition:partial-coaction}
 A \emph{partial coaction} of a $C^{*}$-bialgebra $(A,\Delta)$ on a $C^{*}$-algebra $C$ is a strict $*$-homomorphism $\delta\colon C \to M(C\otimes A)$ satisfying the following conditions:
  \begin{enumerate}
  \item $\delta(C)(1_{C} \otimes A) \subseteq C\otimes A$;
    \item $\delta$ is partially coassociative in the sense that
    \begin{align} \label{eq:partial-coaction}
      (\delta \otimes \id_{A})\delta(c) = (\delta(1_{C}) \otimes 1_{A})(\id_{C} \otimes \Delta)\delta(c)
    \end{align}
 for all $c \in C$,
 or, equivalently, the following diagram commutes:
    \begin{align} \label{eq:partial-coassociativity}
      \xymatrix@C=80pt{
C\ar[r]^{\delta} \ar[d]_{\delta} & M(C\otimes A) \ar[d]^{\delta \otimes \id} \\ M(C\otimes A) \ar[r]^(0.45){(\delta(1_{C}) \otimes 1_{A}) (\id_{C} \otimes \Delta)\delta } & M(C\otimes A \otimes A)}
    \end{align}
  \end{enumerate}
\end{definition}
Let $\delta$ be a partial coaction of a $C^{*}$-bialgebra $(A,\Delta)$ on a $C^{*}$-algebra $C$.
For every functional $\omega \in A^{*}$ and every multiplier $T \in M(C)$, we  define a multiplier
\begin{align*}
  \omega \triangleright T := (\id_{C} \otimes \omega)\delta(T) \in M(C),
\end{align*}
where we use the fact that we can write $\omega=a \upsilon$ or $\omega= \upsilon' a'$ with $a,a'\in A$ and $\upsilon,\upsilon' \in A^{*}$ by Cohen's factorization theorem. 

Let  $c\in C$ and $\omega\in A^{*}$. Then conditions (1) and (2) in Definition \ref{definition:partial-coaction} imply  $ \omega \triangleright c \in C$ and
\begin{align} \label{eq:delta-act}
  \delta(\omega \triangleright c) = (\id_{C} \otimes \id_{A} \otimes \omega)(\delta\otimes \id_{A})\delta(c) = \delta(1_{C}) (\id_{C} \otimes \id_{A} \otimes \omega)(\id_{C} \otimes \Delta)\delta(c).
\end{align}
In particular, for every character $\chi \in A^{*}$,
\begin{align}
  \label{eq:character-act}
  \chi \triangleright (\omega \triangleright c) = (\chi \triangleright 1_{C})(\id_{C} \otimes (\chi\otimes \omega)\Delta)\delta(c) = (\chi \triangleright 1_{C})(\chi \omega \triangleright c).
\end{align}

The following conditions on a partial coaction are straightforward generalizations of the corresponding conditions on coactions, and will play an equally important role:
\begin{definition}
We say that a partial coaction $\delta$ of a $C^{*}$-bialgebra $(A,\Delta)$ on a $C^{*}$-algebra $C$
\begin{itemize}
\item satisfies the \emph{Podle\'s condition}  if  $[\delta(C)(1_{C} \otimes A)]=[\delta(1_{C})(C\otimes A)]$;
\item  is \emph{weakly continuous} if $[A^{*} \triangleright C] = C$;
 \item is \emph{counital} if $(A,\Delta)$ has a counit $\varepsilon$ and  $(\id_{C} \otimes \varepsilon)\circ \delta = \id$.
\end{itemize}
\end{definition} 
\begin{remark} \label{remark:weak-continuity-hahn-banach}
 If $\delta$ is a partial coaction as above and $X \subseteq A^{*}$ is a subset that separates the points of $A$, then a standard application of the Hahn-Banach theorem shows that $[X \triangleright C]=[A^{*} \triangleright C]$.
\end{remark}
 Every counital partial coaction evidently is  weakly continuous.

\new 
A coaction satisfying the Podle\'s condition is automatically weakly continuous, and is usually called \emph{(strongly) continuous}. For partial coactions,  this implication does no longer hold in general, and so we avoid this terminology. \old
\begin{lemma}
  Let $\delta$ be a partial coaction of a $C^{*}$-bialgebra $(A,\Delta)$ on a $C^{*}$-algebra $C$ that satisfies the Podle\'s condition. Then:
\begin{enumerate}
\item $\delta$ is weakly continuous if and only if  $[(A^{*}\triangleright 1_{C})C]=C$;
\item $\delta$ is counital if and only if  $(A,\Delta)$ has a counit $\varepsilon$ and  $\varepsilon \triangleright 1_{C}=1_{C}$.
\end{enumerate}
  \end{lemma}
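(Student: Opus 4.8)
The plan is to derive both equivalences from a single observation: applying the slice maps $\id_C \otimes \upsilon$, $\upsilon \in A^{*}$, to the Podle\'s condition $[\delta(C)(1_{C} \otimes A)] = [\delta(1_{C})(C \otimes A)]$ turns it into an equality of the two subspaces of $C$ appearing in the statement. The computational input consists of two slice identities. Writing $a\upsilon \in A^{*}$ for the functional $x \mapsto \upsilon(xa)$, one has, for $c \in C$, $a \in A$ and $\upsilon \in A^{*}$,
\begin{align*}
  (\id_{C} \otimes \upsilon)\big(\delta(c)(1_{C} \otimes a)\big) &= (\id_{C} \otimes a\upsilon)\delta(c) = (a\upsilon) \triangleright c, \\
  (\id_{C} \otimes \upsilon)\big(\delta(1_{C})(c \otimes a)\big) &= \big((a\upsilon) \triangleright 1_{C}\big)c,
\end{align*}
the first because slicing the second leg absorbs the factor $1_{C} \otimes a$ into the functional, and the second by additionally writing $c \otimes a = (1_{C} \otimes a)(c \otimes 1_{A})$ and using that $\id_{C} \otimes \upsilon$ is a right $C$-module map in the first leg. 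All the arguments here are honest elements of $C \otimes A$, by condition (1) of Definition \ref{definition:partial-coaction} and because $\delta(1_{C}) \in M(C \otimes A)$, so the slices are genuinely defined. Since every $\omega \in A^{*}$ is of the form $a\upsilon$ by Cohen's factorization theorem, letting $a$, $\upsilon$ and the remaining arguments vary shows that the slices of $\delta(C)(1_{C} \otimes A)$ range over $A^{*} \triangleright C$ and those of $\delta(1_{C})(C \otimes A)$ range over $(A^{*} \triangleright 1_{C})C$.

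For part (1) I would then apply these slice maps to the Podle\'s condition. As slicing is bounded and linear, and the closed span of the slices of a set coincides with that of the slices of its closed linear span, passing $\{\id_{C} \otimes \upsilon\}_{\upsilon}$ through the equality $[\delta(C)(1_{C} \otimes A)] = [\delta(1_{C})(C \otimes A)]$ yields
\begin{align*}
  [A^{*} \triangleright C] = [(A^{*} \triangleright 1_{C})C].
\end{align*}
This holds unconditionally under the Podle\'s hypothesis, and part (1) is exactly the assertion that the left-hand side equals $C$ if and only if the right-hand side does.

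For part (2), the forward implication needs no Podle\'s condition: since $\delta$ is strict and $\varepsilon$ is a character, the composite $(\id_{C} \otimes \varepsilon)\delta$ extends to a strict map $M(C) \to M(C)$ that agrees with the identity on the strictly dense subset $C$, hence equals the identity on $M(C)$; evaluating at $1_{C}$ gives $\varepsilon \triangleright 1_{C} = 1_{C}$. For the converse, assume $\varepsilon \triangleright 1_{C} = 1_{C}$. Then $1_{C} \in A^{*} \triangleright 1_{C}$, so $C = 1_{C}C \subseteq (A^{*} \triangleright 1_{C})C$, and part (1) shows that $\delta$ is weakly continuous, i.e.\ $[A^{*} \triangleright C] = C$. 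Applying \eqref{eq:character-act} with the character $\chi = \varepsilon$ and using $\varepsilon\omega = \omega$ together with $\varepsilon \triangleright 1_{C} = 1_{C}$ gives
\begin{align*}
  \varepsilon \triangleright (\omega \triangleright c) = (\varepsilon \triangleright 1_{C})(\varepsilon\omega \triangleright c) = \omega \triangleright c.
\end{align*}
Thus the bounded map $\varepsilon \triangleright (-) = (\id_{C} \otimes \varepsilon)\delta$ fixes every element of $A^{*} \triangleright C$, hence fixes all of $[A^{*} \triangleright C] = C$, which is precisely counitality.

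I expect the only delicate point to be the bookkeeping behind the two slice identities: verifying that each expression is a slice of an element of $C \otimes A$ rather than of a mere multiplier, correctly tracking the left/right convention in $a\upsilon$, and checking that replacing generating sets by their closed linear spans does not alter the resulting closed span. Once these are settled, both equivalences follow from the single identity $[A^{*} \triangleright C] = [(A^{*} \triangleright 1_{C})C]$ and the relation \eqref{eq:character-act}.
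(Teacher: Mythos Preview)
Your proof is correct and follows essentially the same approach as the paper: for (1), both you and the paper slice the Podle\'s identity with $\id_{C}\otimes\upsilon$ and invoke Cohen factorization to obtain $[A^{*}\triangleright C]=[(A^{*}\triangleright 1_{C})C]$; for (2), both use this together with \eqref{eq:character-act} (with $\chi=\varepsilon$) to conclude that $\varepsilon\triangleright(-)$ fixes a dense subset of $C$. Your write-up is a bit more explicit about the strict extension in the forward direction of (2) and about why the slices land in $C\otimes A$, but the substantive steps coincide.
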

\begin{proof}
  (1) By assumption, the closed linear span of all elements of the form $a\omega \triangleright c = (\id_{C} \otimes \omega)(\delta(c)(1_{C} \otimes a))$, where $\omega \in A^{*}$, $a\in A$ and $c\in C$, is equal to the closed linear span of all elements of the form $(\id_{C} \otimes \omega)(\delta(1_{C})(c \otimes a)) = (a\omega \triangleright 1_{C})c$.  Now, use  Cohen's factorization theorem.

  (2) If   $\varepsilon \triangleright 1_{C}=1_{C}$, then  elements of the form $a\varepsilon \triangleright c$, where $a\in A$ and $c\in C$, are linearly dense in $C$, and for every $\omega \in A^{*}$ and $c\in C$, \eqref{eq:character-act} implies
  $\varepsilon \triangleright (\omega \triangleright c) =      1_{C} \cdot (\omega \triangleright c)$.
\end{proof}

For regular reduced $C^{*}$-algebraic quantum groups, weakly continuous coactions
automatically satisfy the Podle\'s condition \cite[Proposition
5.8]{baaj:mu-semi-regular}. More generally, we show the following:
\begin{proposition} \label{proposition:regular-weak-strong} Let $(A,\Delta)$ be a regular $C^{*}$-quantum group. Then every weakly continuous partial coaction of $(A,\Delta)$ satisfies the Podle\'s condition.
\end{proposition}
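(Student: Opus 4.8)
The plan is to express the Podle\'s condition in terms of the reduced bicharacter and to use regularity in the guise of equation~\eqref{eq:weakly-regular}. Recall that we must show $[\delta(C)(1_{C}\otimes A)]=[\delta(1_{C})(C\otimes A)]$; the inclusion $\subseteq$ is part of a weakly continuous coaction's structure, so the real content is $[\delta(1_{C})(C\otimes A)]\subseteq[\delta(C)(1_{C}\otimes A)]$. First I would fix a faithful anti-Heisenberg pair $(\pi,\hat\pi)$ on a Hilbert space $K$ and recall the unitary $V=(\id_{A}\otimes\hat\pi)(\widehat{W}^{A})\in M(A\otimes\hat\pi(\hat{A}))$ satisfying the anti-Heisenberg relation~\eqref{eq:anti-heisenberg}, namely $V(1_{A}\otimes\pi(a))V^{*}=(\id_{A}\otimes\pi)\Delta(a)$. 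The regularity hypothesis gives us the density~\eqref{eq:weakly-regular}, that is $[(1_{A}\otimes\hat\pi(\hat{A}))V(A\otimes 1)]=A\otimes\hat\pi(\hat{A})$.

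Next I would bring $\delta$ into contact with this unitary. The key computation is to apply $\id_{C}\otimes\id_{A}\otimes\pi$ to the partial coassociativity identity~\eqref{eq:partial-coaction} and rewrite the right-hand side using~\eqref{eq:anti-heisenberg}. Concretely, since $(\id_{A}\otimes\pi)\Delta(a)=V(1_{A}\otimes\pi(a))V^{*}$, one expects an identity of the schematic form
\begin{align*}
  (\delta\otimes\pi)\delta(c)
  &= (\delta(1_{C})\otimes 1)\,(1_{C}\otimes V)\,(\delta(c)\otimes 1)\,(1_{C}\otimes V^{*})
\end{align*}
after suitably interpreting the leg numbering, where $V$ acts on the $A$- and $K$-legs. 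Multiplying on the right by elements $1_{C}\otimes a\otimes 1$ and slicing against functionals on the last leg should convert the density in~\eqref{eq:weakly-regular} into a statement relating $[\delta(C)(1_{C}\otimes A)]$ and $[\delta(1_{C})(C\otimes A)]$. The point is that conjugation by $V$ together with the regularity density lets one absorb an arbitrary element of $A$ sitting next to $\delta(1_{C})$ into the $\delta(c)$ factor, at the cost of introducing slice maps in the $\hat\pi(\hat{A})$-leg.

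The main obstacle, and the place where weak continuity enters, is the bookkeeping that turns the operator identity into a statement about closed linear spans. One has to verify that the slices $(\id\otimes\omega)$ applied across the relevant leg produce exactly the multipliers $\omega\triangleright$, and that the hypothesis $[A^{*}\triangleright C]=C$ is what guarantees the resulting spans fill out $[\delta(1_{C})(C\otimes A)]$ rather than a proper subspace. I would use that $\pi$ and $\hat\pi$ are faithful and nondegenerate to transport the density~\eqref{eq:weakly-regular} faithfully back to $M(C\otimes A)$, and Cohen's factorization theorem to handle the passage between functionals of the form $a\omega$ and arbitrary $\omega\in A^{*}$, exactly as in the proof of the preceding lemma. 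The delicate step is ensuring the leg-numbering conventions are consistent throughout, since $V$ naturally pairs the $A$-leg with the auxiliary $K$-leg while $\delta$ produces factors in the $C$- and $A$-legs; getting the flips right is where a careless argument would break down.
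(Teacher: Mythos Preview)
Your plan is essentially the paper's own proof. The paper fixes an anti-Heisenberg pair $(\pi,\hat\pi)$, uses weak continuity to write $[\delta(C)(1_{C}\otimes A)]=[\delta(\omega\circ\pi\triangleright C)(1_{C}\otimes A)]$, applies the identity~\eqref{eq:delta-act} (which is exactly the sliced form of your schematic coassociativity relation), and then runs the $V_{23}$ manipulation together with~\eqref{eq:weakly-regular} to obtain $[\delta(1_{C})(C\otimes A)]$. One small correction to your schematic: in the displayed identity the factor should be $((\id_{C}\otimes\pi)\delta(c))_{13}$ rather than $(\delta(c)\otimes 1)$, i.e.\ the $A$-leg of $\delta(c)$ is sent via $\pi$ to the third (Hilbert-space) slot, not left in the second; you already flagged the leg bookkeeping as the delicate point, and once that is set straight your outline matches the paper line for line.
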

\begin{proof}
  We proceed similarly as in the proof of \cite[Proposition 5.8]{baaj:mu-semi-regular}, and use an anti-Heisenberg pair $(\pi,\hat\pi)$ for $(A,\Delta)$ on some Hilbert space $K$ and the unitary $V$ in \eqref{eq:right-unitary}. 

Let $\delta$ be a weakly continuous partial coaction of $(A,\Delta)$ on a $C^{*}$-algebra $C$.  
By  \eqref{eq:delta-act} and Remark \ref{remark:weak-continuity-hahn-banach},
\begin{align*}
   [ \delta(C)(1_{C}& \otimes A) ] = [ \delta(\omega \circ \pi \triangleright C)(1_{C} \otimes A) :  \omega \in \mathcal{B}(K)_{*}] \\
&= [  \delta(1_{C})\cdot (\id_{C} \otimes \id_{A} \otimes \omega\circ \pi)((\id_{C} \otimes \Delta)(\delta(C))) \cdot (1_{C} \otimes A) :  \omega \in \mathcal{B}(K)_{*}].
 \end{align*}
To shorten the notation, let $\delta_{\pi}:=(\id_{C} \otimes \pi)\circ \delta$.  We use the relations \eqref{eq:anti-heisenberg}, \eqref{eq:weakly-regular} and $[\hat\pi(\hat{A}) \mathcal{B}(K)_{*}]=\mathcal{B}(K)_{*}$,  and find
\begin{align*}
[ (\id_{C} \otimes  \id_{A} &\otimes \omega \circ \pi)( (\id_{C} \otimes \Delta)(\delta(C))(1_{C} \otimes A \otimes 1_{A})) :  \omega \in \mathcal{B}(K)_{*}] \\
&=   [ (\id_{C} \otimes \id_{A} \otimes \omega)( V_{23}\delta_{\pi}(C)_{13}V^{*}_{23}( A \otimes \hat\pi(\hat{A}))_{23}) : \omega \in \mathcal{B}(K)_{*}]  \\
&=[ (\id_{C} \otimes \id_{A} \otimes \omega)(V_{23} \delta_{\pi}(C)_{13}(A \otimes \hat\pi(\hat{A}))_{23}) :  \omega \in \mathcal{B}(K)_{*}]  \\
&=[ (\id_{C} \otimes \id_{A} \otimes \omega)((1_{A} \otimes \hat\pi(\hat{A}))_{23}V_{23}(A\otimes 1_{K})_{23}\delta_{\pi}(C)_{13} ) :   \omega \in \mathcal{B}(K)_{*}]  \\
&=[ (\id_{C} \otimes \id_{A} \otimes \omega)((A \otimes \hat\pi(\hat{A}))_{23}\delta_{\pi}(C)_{13}) :  \omega \in \mathcal{B}(K)_{*}]  \\
&= [A^{*} \triangleright C] \otimes  A,
\end{align*}
whence $[  \delta(C)(1_{C} \otimes A) ] = [\delta(1_{C})(C \otimes A)]$.
\end{proof}

Partial coactions on $\C$ correspond to certain projections:
\begin{lemma}
  Partial coactions of a $C^{*}$-bialgebra  $(A,\Delta)$ on $\C$ correpond bijectively with  projections $p \in M(A)$ satisfying
  \begin{align} \label{eq:projection-cocycle}
   (p \otimes 1_{A})\Delta(p)=p \otimes p.
  \end{align}
\end{lemma}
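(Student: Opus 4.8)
The plan is to set up a bijection in both directions and verify that the defining equation of a partial coaction matches the cocycle-type condition \eqref{eq:projection-cocycle}. First I would note that since the algebra $C = \C$, a strict $*$-homomorphism $\delta \colon \C \to M(\C \otimes A) = M(A)$ is completely determined by the image $p := \delta(1) \in M(A)$, which must be a projection because $\delta$ is a $*$-homomorphism and $1 \in \C$ is a self-adjoint idempotent. Conversely, any projection $p \in M(A)$ defines such a $*$-homomorphism by $\delta(\lambda) = \lambda p$, and this is automatically strict. So the content of the lemma is entirely in matching up the axioms (1) and (2) of Definition \ref{definition:partial-coaction} with the single equation \eqref{eq:projection-cocycle}.

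Next I would check condition (1), namely $\delta(C)(1_C \otimes A) \subseteq C \otimes A$. Under the identification $C \otimes A = A$ and $1_C \otimes A = A$, this reads $pA \subseteq A$, which holds trivially for any projection $p \in M(A)$. Thus condition (1) imposes no constraint and the only real content comes from the partial coassociativity \eqref{eq:partial-coaction}. Here I would carefully unwind the identifications: with $c = 1$, the left-hand side $(\delta \otimes \id_A)\delta(1)$ becomes $(\delta \otimes \id_A)(p)$. Since $\delta$ extends to multipliers, applying $\delta \otimes \id_A$ to $p \in M(A) = M(C \otimes A)$ gives $(\delta \otimes \id_A)(p) = (p \otimes 1_A)$ multiplied appropriately — more precisely, I would track that $\delta \otimes \id_A$ acting on a multiplier $T \in M(A)$ yields $(\delta \otimes \id_A)(T)$, and for $T = p = \delta(1)$ this produces $p \otimes 1_A$ viewed in $M(A \otimes A)$, since $\delta(a) = ap$ slot-wise. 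The right-hand side $(\delta(1_C) \otimes 1_A)(\id_C \otimes \Delta)\delta(1)$ becomes $(p \otimes 1_A)\Delta(p)$, again using $\delta(1) = p$ and $(\id_C \otimes \Delta)(p) = \Delta(p)$ under the identification $C \otimes A \otimes A = A \otimes A$.

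Putting these two computations together, equation \eqref{eq:partial-coaction} evaluated at $c = 1$ is precisely $p \otimes p = (p \otimes 1_A)\Delta(p)$, which is \eqref{eq:projection-cocycle}; here the left side $p \otimes p$ arises because $(\delta \otimes \id_A)(p) = p \otimes p$ when one carefully reads off how $\delta \otimes \id$ treats the projection $p = \delta(1)$ in the first two legs. Since every element of $C = \C$ is a scalar multiple of $1$, verifying \eqref{eq:partial-coaction} for $c = 1$ suffices to verify it for all $c$, so the cocycle condition is equivalent to partial coassociativity. The main obstacle I anticipate is the bookkeeping in the previous paragraph: one must be scrupulous about how the strict extensions of $\delta$ to multipliers interact with the various identifications $M(\C \otimes A) \cong M(A)$ and how $\delta \otimes \id_A$ acts on the multiplier $p$, because a careless reading could produce $\Delta(p)$ on the left rather than $p \otimes p$. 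I would therefore spell out explicitly that for $a \in A$ one has $\delta(a) = ap = pa$ interpreted correctly, and track each leg, rather than treating the tensor-leg manipulations as routine. Once the two sides are correctly identified, the equivalence is immediate, and checking that the correspondence $\delta \leftrightarrow p$ is a genuine bijection is then formal.
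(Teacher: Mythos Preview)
Your approach is correct and is essentially identical to the paper's proof: both set up the bijection $\delta \leftrightarrow p = \delta(1)$ and then compute both sides of the partial coassociativity condition to obtain $p \otimes p$ and $(p \otimes 1_A)\Delta(p)$, respectively. One wobble to clean up: in your second paragraph you first write that $(\delta \otimes \id_A)(p)$ ``produces $p \otimes 1_A$'' before correcting to $p \otimes p$ in the next paragraph---the latter is right (under the identification $\C \otimes A \cong A$ the map $\delta \otimes \id_A$ sends $a \mapsto p \otimes a$, hence $p \mapsto p \otimes p$), so just delete the earlier mis-statement.
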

\begin{proof}
 Projections $p\in M(A)$ correspond to strict $*$-homomorphisms $\delta\colon \C \to M(\C \otimes A)  \cong  M(A)$ via $p=\delta(1)$, and under this correspondence, $(\delta \otimes \id_{A})\delta(\lambda) = \lambda \otimes p \otimes p$ and
 $(\delta(1) \otimes 1_{A})(\id_{\C} \otimes \Delta)(\delta(\lambda)) = \lambda \otimes (p \otimes 1_{A})\Delta(p)$.
\end{proof}
\new Note that if $(A,\Delta)$ is co-commutative, for example, if $A=C^{*}(G)$ or $A=C^{*}_{r}(G)$ for a locally compact group $G$, then \eqref{eq:projection-cocycle}  just means that $p$ is \emph{group-like} in the sense that $(p \otimes 1_{A})\Delta(p) = p\otimes p = (1_{A}\otimes p)\Delta(p)$. Group-like projections were also studied in connection with idempotent states, see \cite[\S2]{MR2543634}.  Elementary examples related to groups are as follows. \old
  \begin{example} 
Let $G$ be a locally compact group.  
\begin{enumerate}
\item Consider the $C^{*}$-bialgebra $(C_{0}(G),\Delta)$.  A projection $p\in M(C_{0}(G))$ is just the characteristic function of a clopen subset $H \subseteq G$, and satisfies \eqref{eq:projection-cocycle} if and only if $p(g)p(gg')=p(g)p(g')$ for all $g,g' \in G$, that is, if and only if $H \subseteq G$ is a subgroup. Thus, partial coactions of $(C_{0}(G),\Delta)$ on $\C$ correspond to open subgroups of $G$.
  \item Consider the reduced group $C^{*}$-bialgebra $(C^{*}_{r}(G),\Delta)$.  For every finite normal subgroup $N\subseteq G$, the sum $p=\sum_{g\in N} \lambda_{g}$ is a central projection in $M(C^{*}_{r}(G))$ satisfying \eqref{eq:projection-cocycle}, where  $\lambda_{g}$ denotes the left translation by $g\in G$.
\new More information on group-like projections in  $C^{*}_{r}(G)$ and $C^{*}(G)$  can be found in \cite[Proposition 7.6]{MR2437092} and \cite{MR0281843}.    \old
\end{enumerate}
  \end{example}

Every central projection satisfying \eqref{eq:projection-cocycle} gives rise to a 
quotient $C^{*}$-bialgebra $(A_{p},\Delta_{p})$ of $(A,\Delta)$ whose coactions can be regarded as partial coactions of $(A,\Delta)$:
\begin{lemma}
  Suppose that $(A,\Delta)$ is a $C^{*}$-bialgebra with a central projection $p\in M(A)$ satisfying \eqref{eq:projection-cocycle}. Let $A_{p}=pA$ and define $\Delta_{p} \colon A_{p} \to M(A_{p} \otimes A_{p})$ by $a \mapsto (p\otimes p)\Delta(a)$.  Then $(A_{p},\Delta_{p})$ is a $C^{*}$-bialgebra, the map $A \to A_{p}$, $a \mapsto pa$, is a morphism of $C^{*}$-bialgebras, and
 every coaction of
    $(A_{p},\Delta_{p})$ can be regarded as a partial coaction of
    $(A,\Delta)$.
  \end{lemma}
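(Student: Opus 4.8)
The plan is to first extract, from centrality of $p$ and the cocycle condition \eqref{eq:projection-cocycle}, the few algebraic identities on which everything rests, and then verify the three assertions in turn. Since $p$ is central, $A_{p}=pA=pAp$ is a closed two-sided ideal of $A$, hence a $C^{*}$-algebra, and $p\otimes 1_{A}$, $1_{A}\otimes p$, $p\otimes p$ are central projections in $M(A\otimes A)$ with $M(A_{p}\otimes A_{p})=(p\otimes p)M(A\otimes A)$. Multiplying \eqref{eq:projection-cocycle} on the left by $1_{A}\otimes p$ gives $(p\otimes p)\Delta(p)=p\otimes p$, and for $x\in A_{p}$ the relation $x=px$ yields $\Delta(x)=\Delta(p)\Delta(x)$, whence
\[ (p\otimes 1_{A})\Delta(x) = (p\otimes 1_{A})\Delta(p)\Delta(x) = (p\otimes p)\Delta(x). \]
This last identity, which converts a single $p$ into $p\otimes p$ on the image of $A_{p}$, is the technical heart of the argument.

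Next I would verify that $\Delta_{p}(a):=(p\otimes p)\Delta(a)$ defines a non-degenerate $*$-homomorphism $A_{p}\to M(A_{p}\otimes A_{p})$. Multiplicativity and $*$-preservation are immediate from centrality of the projection $p\otimes p$ and the fact that it commutes with $\Delta(A)$, while non-degeneracy follows from
\[ [\Delta_{p}(A_{p})(A_{p}\otimes A_{p})] = (p\otimes p)\,[\Delta(A)(A\otimes A)] = (p\otimes p)(A\otimes A) = A_{p}\otimes A_{p}, \]
where the first equality uses $(p\otimes p)\Delta(p)=p\otimes p$ and the second uses non-degeneracy of $\Delta$. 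I would then dispose of coassociativity of $\Delta_{p}$ together with the second assertion. The map $f\colon a\mapsto pa$ is a non-degenerate $*$-homomorphism of $A$ onto the corner $A_{p}$, and
\[ \Delta_{p}(f(a)) = (p\otimes p)\Delta(pa) = (p\otimes p)\Delta(a) = (f\otimes f)\Delta(a) \]
shows that $f$ intertwines $\Delta$ and $\Delta_{p}$. Since $f$ is surjective, applying $f\otimes f\otimes f$ to the coassociativity $(\Delta\otimes\id)\Delta=(\id\otimes\Delta)\Delta$ and using the intertwining relation forces $(\Delta_{p}\otimes\id)\Delta_{p}=(\id\otimes\Delta_{p})\Delta_{p}$; this proves at once that $(A_{p},\Delta_{p})$ is a $C^{*}$-bialgebra and that $f$ is a bialgebra morphism.

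For the final assertion, let $\gamma\colon C\to M(C\otimes A_{p})$ be a coaction of $(A_{p},\Delta_{p})$ and let $\iota\colon A_{p}\hookrightarrow A$ be the strict embedding of the corner $pA$, so $\iota(1_{M(A_{p})})=p$. I would set $\delta:=(\id_{C}\otimes\iota)\circ\gamma$, a strict $*$-homomorphism with $\delta(1_{C})=1_{C}\otimes p$. Because $1_{C}\otimes p$ is central in $M(C\otimes A)$ and satisfies $(1_{C}\otimes p)\delta(c)=\delta(c)$, one has, for $c\in C$ and $a\in A$,
\[ \delta(c)(1_{C}\otimes a) = \delta(c)(1_{C}\otimes pa) = (\id_{C}\otimes\iota)\bigl(\gamma(c)(1_{C}\otimes pa)\bigr), \]
which lies in $C\otimes A_{p}\subseteq C\otimes A$ by the coaction property of $\gamma$; this is condition (1) of Definition \ref{definition:partial-coaction}. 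For the partial coassociativity (2), I would embed everything into $M(C\otimes A\otimes A)$: unwinding definitions and using coassociativity of $\gamma$ gives
\[ (\delta\otimes\id_{A})\delta(c) = (\id_{C}\otimes\iota\otimes\iota)\bigl((\id_{C}\otimes\Delta_{p})\gamma(c)\bigr), \]
whose last two legs carry $(p\otimes p)\Delta$, whereas the right-hand side $(\delta(1_{C})\otimes 1_{A})(\id_{C}\otimes\Delta)\delta(c)$ carries $(p\otimes 1_{A})\Delta$ on the image of $A_{p}$; the two coincide precisely by the identity $(p\otimes 1_{A})\Delta(x)=(p\otimes p)\Delta(x)$ for $x\in A_{p}$ derived above. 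The factor $\delta(1_{C})=1_{C}\otimes p$ appearing in \eqref{eq:partial-coaction} is exactly what supplies the missing second $p$.

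I expect the main obstacle to be bookkeeping rather than conceptual: making the informal leg-wise manipulations of the last paragraph rigorous, that is, justifying that the strict extensions $\delta\otimes\id_{A}$, $\id_{C}\otimes\Delta$ and $\id_{C}\otimes\iota\otimes\iota$ may be composed and applied to multipliers, and that the displayed identities hold on all of $M(C\otimes A)$ rather than merely on elementary tensors. This is handled by appealing to non-degeneracy and strictness of all the maps involved and, where convenient, by testing against functionals of the form $\id_{C}\otimes\id_{A}\otimes\omega$; the cocycle identity \eqref{eq:projection-cocycle} and centrality of $p$ do all the genuine work.
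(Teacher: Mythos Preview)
Your proof is correct and follows essentially the same route as the paper's. The paper is extremely terse: it simply declares all assertions ``easily verified'' and exhibits only the partial coassociativity computation
\[
(\delta(1_{C}) \otimes 1_{A})(\id_{C} \otimes \Delta)\delta(c) = (1_{C}\otimes p\otimes 1_{A})(\id_{C}\otimes\Delta)((1_{C}\otimes p)\delta(c)) = (1_{C}\otimes p\otimes p)(\id_{C}\otimes\Delta)\delta(c) = (\id_{C}\otimes\Delta_{p})\delta(c),
\]
which is exactly your final paragraph; the passage from the first to the second line is precisely your identity $(p\otimes 1_{A})\Delta(x)=(p\otimes p)\Delta(x)$ for $x\in A_{p}$. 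Your additional work (non-degeneracy of $\Delta_{p}$, deducing coassociativity from surjectivity of $f$, and the strictness bookkeeping) fills in details the paper suppresses but does not depart from its approach.
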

\begin{proof}
 All of these assertions are easily verified, for example, if $\delta$ is a coaction of $(A_{p},\Delta_{p})$ on a $C^{*}$-algebra $C$, then  for all $c\in C$,
\begin{multline*}
(\delta(1_{C}) \otimes 1_{A})  (\id_{C} \otimes \Delta)\delta(c) = (1_{C} \otimes p \otimes   1_{A} )(1_{C} \otimes \Delta)((1_{C} \otimes p)\delta(c)) \\ 
=( 1_{C} \otimes p \otimes p)(\id_{C} \otimes \Delta)\delta(c)  
=( 1_{C} \otimes \Delta_{p})\delta(c) 
=   (\id_{C} \otimes \delta)\delta(c).  \qedhere
\end{multline*}
\end{proof}
\begin{example}
Let $\qG=(A,\Delta)$ be a discrete quantum group, so that $A$ is a $c_{0}$-sum of matrix algebras indexed by $ \Irr(\hat{\qG})$. Consider a central projection $p\in M(A)$ supported on   $\mathcal{J} \subseteq \Irr(\hat{\qG})$.  Then $(p\otimes 1)\Delta(p)=p \otimes p$ if and only if the following condition holds:
  \begin{align} \label{eq:p-discrete}
 \text{If $\alpha \in \mathcal{J}$, $\beta,\gamma \in \Irr(\hat\qG)$ and $\alpha \otimes \beta$ contains $\gamma$, then $\beta \in \mathcal{J}$ if and only if $\gamma \in \mathcal{J}$.}    
  \end{align}
 If   $(A_{p},\Delta_{p})$ is a discrete quantum subgroup of $(A,\Delta)$, then $\mathcal{J}$ is closed under taking duals and summands of tensor products, and then Frobenius duality implies \eqref{eq:p-discrete}. Conversely, suppose that  \eqref{eq:p-discrete} holds.  Taking  $\gamma=\alpha$, we see that $\mathcal{J}$ contains the trivial representation,  and taking this for $\gamma$, we see that $\mathcal{J}$ contains the dual of $\alpha$. Thus, finite sums of representations in $\mathcal{J}$ form a rigid tensor subcategory, and  $(A_{p},\Delta_{p})$ is a discrete quantum subgroup of $(A,\Delta)$. 
\end{example}

\section{The relation to partial actions of groups}
\label{sec:groups}

  We now relate partial actions of a (discrete) group $\Gamma$ to counital partial coactions of the $C^{*}$-bialgebra $C_{0}(\Gamma)$. Recall  that a \emph{partial action} of  $\Gamma$ on a $C^{*}$-algebra $C$ is a family $(D_{g})_{g\in \Gamma}$ of closed ideals of $C$ together with a family $(\theta_{g})_{g\in \Gamma}$ of isomorphisms $\theta_{g} \colon D_{g^{-1}} \to D_{g}$ such that 
  \begin{itemize}
  \item[(G1)] $D_{e} = C$ and $\theta_{e} = \id_{C}$, where $e\in \Gamma$ denotes the unit,
  \item[(G2)] $\theta_{g^{-1}}\theta_{g}\theta_{h} = \theta_{g^{-1}}\theta_{gh}$ and $\theta_{g}\theta_{h}\theta_{h^{-1}} = \theta_{gh}\theta_{h^{-1}}$  for all $g,h \in \Gamma$  as partially defined maps;
  \end{itemize}
see \cite{exel:book,mcclanahan}. We show that partial coactions of $C_{0}(\Gamma)$ correspond to partial actions of $\Gamma$ as above, where each ideal $D_{g}$  is a direct summand, and adopt the following terminology:
\begin{definition} 
  A \emph{disconnected partial action} of $\Gamma$ on a $C^{*}$-algebra $C$ is given by a family $(p_{g})_{g\in \Gamma}$ of central projections  in $M(C)$ and a family $(\theta_{g})_{g\in \Gamma}$ of isomorphisms $\theta_{g} \colon p_{g^{-1}}C\to p_{g}C$ such that $((p_{g}C)_{g\in \Gamma}, (\theta_{g})_{g\in \Gamma},)$ is a partial action.
\end{definition}
\new \begin{remark}
  \begin{enumerate}
  \item Let $X$ be a locally compact Hausdorff space. Then partial actions of $\Gamma$ on $C_{0}(X)$ correspond bijectively to partial actions of $\Gamma$ on $X$ \cite[Corollary 11.6]{exel:book}, and a partial action on $C_{0}(X)$ is disconnected if and only if for every group element $g \in \Gamma$, the domain of definition of its action on $X$ is not only open but also closed. This condition also implies that the partial action on $X$ admits a globalization that is Hausdorff \cite[Proposition 5.7]{exel:book}.
  \item A partial action of $\Gamma$ on an algebra $C$ admits a globalization if and only if for every group element $g\in \Gamma$, its domain of definition is not just a two-sided ideal of $C$ but also unital, that is, a direct summand \cite[Theorem 6.13]{exel:book}.
  \end{enumerate}
\end{remark} \old
We denote  by $C_{b}(\Gamma;C)$ the $C^{*}$-algebra of norm-bounded $C$-valued functions on $\Gamma$, and identify this $C^{*}$-algebra with a subalgebra of $M(C \otimes C_{0}(\Gamma))$ in the canonical way. For each $g\in \Gamma$, we denote by $\ev_{g} \in C_{0}(\Gamma)^{*}$ the evaluation at $g$.
\begin{proposition} \label{proposition:partial-group-action}
  Let $\Gamma$ be a group and  let $C$ be a $C^{*}$-algebra.
\begin{enumerate}
\item Let $\delta$ be a counital partial coaction of $C_{0}(\Gamma)$ on $C$. Then the projections
  \begin{align*}
   p_{g}:=\ev_{g} \triangleright 1_{C}
  \end{align*}
are central and the maps $\theta_{g} \colon p_{g^{-1}}C \to p_{g}C$ given by
\begin{align*}
  \theta_{g}(c):=  \ev_{g} \triangleright c
\end{align*}
form a disconnected partial action of $\Gamma$ on $C$.
\item Let $((p_{g})_{g\in \Gamma},(\theta_{g})_{g\in \Gamma})$ be a disconnected  partial action of $\Gamma$ on  $C$. Then the map
  \begin{align*}
   \delta \colon C \to C_{b}(\Gamma;C) \hookrightarrow M(C \otimes C_{0}(\Gamma)) 
  \end{align*}
 defined by
  \begin{align*}
    (\delta(c))(g) &:= \theta_{g}(p_{g^{-1}}c)  \quad (c\in C, \, g\in \Gamma)
  \end{align*}
is a counital partial coaction of $C_{0}(\Gamma)$ on $C$.
  \end{enumerate}
\end{proposition}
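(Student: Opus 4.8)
The plan is to treat the two constructions as two readings of one algebraic identity. Throughout, $\Gamma$ is discrete, so $C\otimes C_{0}(\Gamma)=C_{0}(\Gamma;C)$ is the $c_{0}$-direct sum of copies of $C$ and $M(C\otimes C_{0}(\Gamma))=C_{b}(\Gamma;M(C))$; for $T$ in the latter I write $T(g)=(\id_{C}\otimes \ev_{g})(T)$. Each $\ev_{g}$ is a character on $C_{0}(\Gamma)$ with $\ev_{g}\ev_{h}=(\ev_{g}\otimes\ev_{h})\circ\Delta=\ev_{gh}$, and the counit is $\varepsilon=\ev_{e}$. Hence $\theta_{g}:=\ev_{g}\triangleright(-)=(\id_{C}\otimes\ev_{g})\circ\delta$ is a $*$-homomorphism $C\to M(C)$ (extending strictly to $M(C)$), and $p_{g}=\theta_{g}(1_{C})=\delta(1_{C})(g)$ is a projection because $\delta(1_{C})$ is. The single identity that powers both directions is the specialisation of \eqref{eq:character-act} to $\chi=\ev_{g}$, $\omega=\ev_{h}$, namely
\begin{align*}
 \theta_{g}\bigl(\theta_{h}(c)\bigr)=p_{g}\,\theta_{gh}(c)\qquad(c\in C,\ g,h\in\Gamma).
\end{align*}

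For part (1), counitality gives $\theta_{e}=\id$ and $p_{e}=1_{C}$, which is condition (G1). The crucial point is the centrality of the $p_{g}$: taking $h=g^{-1}$ in the displayed identity yields $\theta_{g^{-1}}\circ\theta_{g}=p_{g^{-1}}\theta_{e}=m_{p_{g^{-1}}}$, left multiplication by $p_{g^{-1}}$. As a composition of $*$-homomorphisms $C\to M(C)$, this map is itself a $*$-homomorphism; but $c\mapsto pc$ is $*$-preserving precisely when $(pc)^{*}=pc^{*}$ for all $c$, i.e. when $p$ is central. Thus every $p_{g}$ is a central projection, and $p_{g}C$ is a direct summand. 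From $\delta(1_{C})\delta(c)=\delta(c)=\delta(c)\delta(1_{C})$ we get $\theta_{g}(c)\in p_{g}C$, and applying the identity twice gives $\theta_{g}\circ m_{p_{g^{-1}}}=\theta_{g}$, so $\theta_{g}$ depends only on $p_{g^{-1}}c$; together with $\theta_{g^{-1}}\theta_{g}=m_{p_{g^{-1}}}$ and $\theta_{g}\theta_{g^{-1}}=m_{p_{g}}$ this shows $\theta_{g}\colon p_{g^{-1}}C\to p_{g}C$ is a $*$-isomorphism with inverse $\theta_{g^{-1}}$. Finally condition (G2) follows from the displayed identity: using $\theta_{g^{-1}}(p_{g})=p_{g^{-1}}$ one computes that both $\theta_{g^{-1}}\theta_{g}\theta_{h}$ and $\theta_{g^{-1}}\theta_{gh}$ equal $m_{p_{g^{-1}}}\circ\theta_{h}$ as globally defined maps, and symmetrically for the second relation; restricting to the relevant corners gives the equalities of partially defined maps.

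For part (2), set $\delta(c)(g)=\theta_{g}(p_{g^{-1}}c)$. Since $\theta_{g}$ is isometric on $p_{g^{-1}}C$, the values lie in $p_{g}C\subseteq C$ with norm at most $\|c\|$, so $\delta(c)\in C_{b}(\Gamma;C)\subseteq M(C\otimes C_{0}(\Gamma))$; as $p_{g^{-1}}$ is a central projection, $c\mapsto p_{g^{-1}}c$ is a $*$-homomorphism and $\delta$ is the corresponding family of $*$-homomorphisms $\theta_{g}\circ m_{p_{g^{-1}}}$, hence a $*$-homomorphism. Here $\delta(1_{C})(g)=\theta_{g}(p_{g^{-1}})=p_{g}$, so $\delta(1_{C})$ is a projection and, since $[\theta_{g}(p_{g^{-1}}C)\,C]=[p_{g}C\,C]=p_{g}C$ for each $g$, the map $\delta$ is nondegenerate onto the corner $\delta(1_{C})\,(C\otimes C_{0}(\Gamma))$ and therefore strict; condition (1) of Definition \ref{definition:partial-coaction} is automatic because $C_{0}(\Gamma)$ is a $c_{0}$-algebra, and counitality holds because $\delta(c)(e)=\theta_{e}(p_{e}c)=c$. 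It remains to verify the partial coassociativity \eqref{eq:partial-coaction}; evaluating both sides at $(g,h)$ reduces it to
\begin{align*}
 \theta_{g}\bigl(p_{g^{-1}}\theta_{h}(p_{h^{-1}}c)\bigr)=p_{g}\,\theta_{gh}(p_{(gh)^{-1}}c).
\end{align*}
This I would prove by pushing the central projections through the isomorphisms: using that $\theta_{h}$ maps the summand $D_{h^{-1}}\cap D_{(gh)^{-1}}$ onto $D_{h}\cap D_{g^{-1}}$ one rewrites the left-hand side as $\theta_{g}\theta_{h}(p_{(gh)^{-1}}p_{h^{-1}}c)$, which by the composition axiom $\theta_{g}\theta_{h}=\theta_{gh}$ on $D_{h^{-1}}\cap D_{(gh)^{-1}}$ equals $\theta_{gh}(p_{(gh)^{-1}}p_{h^{-1}}c)=p_{g}\theta_{gh}(p_{(gh)^{-1}}c)$, the last step again by the domain identity for $\theta_{gh}$.

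I expect the main obstacle to be exactly this translation between the two languages: the globally defined maps $\theta_{g}\circ m_{p_{g^{-1}}}$ governed by the clean algebraic relation above, and the partially defined isomorphisms $\theta_{g}\colon D_{g^{-1}}\to D_{g}$ governed by conditions (G1)--(G2). Centrality of the $p_{g}$ is the structural fact that makes the dictionary work, since it turns each $p_{g}C$ into a two-sided direct summand and lets the central projections be moved freely across $\theta_{h}$ and $\theta_{gh}$; the remaining verifications are routine bookkeeping with these projections. Showing that the two constructions are mutually inverse is then immediate from $p_{g}=\ev_{g}\triangleright 1_{C}$ and $\delta(c)(g)=\theta_{g}(p_{g^{-1}}c)$.
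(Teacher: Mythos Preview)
Your proof is correct and follows essentially the same route as the paper's: both hinge on the identity $\Theta_{g}\Theta_{h}=p_{g}\Theta_{gh}$ coming from \eqref{eq:character-act}, deduce centrality of $p_{g}$ from the fact that $\Theta_{g^{-1}}\Theta_{g}=m_{p_{g^{-1}}}$ is a $*$-homomorphism, and verify partial coassociativity by evaluating at $(g,h)$. The one place where the paper is more explicit is in checking that the two sides of (G2) have the same \emph{domain} (computed via $\Theta_{h^{-1}g^{-1}}(p_{g})C = p_{h^{-1}g^{-1}}p_{h^{-1}}C = \Theta_{h^{-1}}(p_{g^{-1}})C$), which you subsume under ``restricting to the relevant corners'' and ``routine bookkeeping''; it would strengthen your write-up to spell this out.
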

\begin{proof}
(1)    For each $g\in \Gamma$,   the map $\Theta_{g} \colon C \to C$ given by $c\mapsto \ev_{g} \triangleright c$
 is a strict endomorphism.
Since $\delta$ is counital, $\Theta_{e}$ is the identity on $C$.  
 Let $g,h \in \Gamma$. Then by \eqref{eq:character-act},
\begin{align} \label{eq:rho-1}
    \Theta_{g}(\Theta_{h}(c)) &= (\ev_{g} \triangleright 1_{C})(\ev_{g}\ev_{h} \triangleright c) = p_{g}\Theta_{gh}(c),
  \end{align}
in particular, 
\begin{align} \label{eq:rho-2}
\Theta_{g}(p_{h}) &= p_{g}p_{gh}, & \Theta_{g}(\Theta_{g^{-1}}(c)) &= p_{g}c, & \Theta_{g^{-1}}(\Theta_{g}(c)) = p_{g^{-1}}c.
\end{align}
Since $\Theta_{g}\circ \Theta_{g^{-1}}$  is a $*$-homomorphism, the second equation implies $p_{g}c = cp_{g}$ for all $c\in C$, that is, $p_{g}$ is central and $D_{g}:=p_{g}C$ is a direct summand of $C$. The second and third equations imply that $\Theta_{g}$ and $\Theta_{g^{-1}}$ restrict to mutually inverse isomorphisms
\begin{align*}
  D_{g^{-1}} \stackrel{\theta_{g}}{\underset{\theta_{g^{-1}}}{\rightleftarrows}} D_{g}.
\end{align*}
It remains to show that $\theta_{g^{-1}}\theta_{gh}=\theta_{g^{-1}}\theta_{g}\theta_{h}$. But the relations \eqref{eq:rho-1} and \eqref{eq:rho-2} imply that
\begin{align*}
  (\Theta_{g^{-1}}\circ \Theta_{gh})(c) = p_{g^{-1}}\Theta_{gh}(c) = (\Theta_{g^{-1}} \circ \Theta_{g} \circ \Theta_{h})(c) 
\end{align*}
for all $c\in C$,
and that the compositions  $\theta_{g^{-1}}\theta_{gh}$ and $\theta_{g^{-1}}\theta_{g}\theta_{h}$ have the domain
\begin{align*}
  \Theta_{h^{-1}g^{-1}}(p_{g})C = p_{h^{-1}g^{-1}}p_{h^{-1}}C =  \Theta_{h^{-1}}(p_{g^{-1}})C.
\end{align*}

(2) For each $g \in \Gamma$, denote by $\delta_{g} \in C_{0}(\Gamma)$ the characteristic function of $\{g\} \subset \Gamma$.  Then 
\begin{align*}
  \delta(c)(1_{C} \otimes \delta_{g}) = \theta_{g}(p_{g}c) \otimes \delta_{g} \quad (g\in \Gamma, \, c\in C).
\end{align*}
We conclude that  $\delta(C)(1_{C} \otimes C_{0}(\Gamma))$ is contained in $C \otimes C_{0}(\Gamma)$, and that $\delta$ co-restricts to a non-degenerate $*$-homomorphism from $C$ to
 $q(C \otimes C_{0}(\Gamma))$, where $q = \sum_{g \in \Gamma} p_{g} \otimes \delta_{g}$, so that $\delta$ is strict. To verify that $\delta$  is partially coassociative, it suffices to check that for all $g,h\in \Gamma$ and $c\in C$, the element
\begin{align*}
  (\id_{C} \otimes \ev_{g} \otimes \ev_{h})(\delta \otimes \id_{A})\delta(c) =    \theta_{g}(p_{g^{-1}}\theta_{h}(p_{h^{-1}}c))
\end{align*}
is equal to  the element
\begin{align*}
  (\id_{C} \otimes \ev_{g} \otimes \ev_{h})((\delta(1_{C}) \otimes 1_{A})(\id_{C} \otimes \Delta)\delta(c)) &=
                                                      \theta_{g}(p_{g^{-1}})\theta_{gh}(p_{h^{-1}g^{-1}}c),
\end{align*}
and this follows easily from the definition of a partial action.
\end{proof}

The following example shows that the correspondence between partial coactions of $C_{0}(\Gamma)$ and partial actions of $\Gamma$ does not easily extend from groups to inverse semigroups.
\begin{example}
  Denote by $\Gamma$ the inverse semigroup consisting of the $2\times 2$-matrices  
  \begin{align*}
    &0, & v&=
  \begin{pmatrix}
    0 & 0 \\ 1 & 0
  \end{pmatrix}, & v^{*} &=
\begin{pmatrix}
    0 & 1 \\ 0 & 0
  \end{pmatrix}, 
& vv^{*} &=
\begin{pmatrix}
    0 & 0 \\ 0 & 1
  \end{pmatrix}, 
& v^{*}v &=
\begin{pmatrix}
    1 & 0 \\ 0 & 0
  \end{pmatrix}
  \end{align*}
  with matrix multiplication as composition. Then $C(\Gamma)$ is a $C^{*}$-bialgebra with respect to the transpose $\Delta$ of the multiplication. For $x\in \Gamma$, define $\delta_{x} \in C(\Gamma)$ by $y\mapsto \delta_{x,y}$. Then, for example,
  \begin{align*}
\Delta(\delta_{v^{*}v}) &= \delta_{v^{*}}  \otimes  \delta_{v} + \delta_{v^{*}v} \otimes \delta_{v^{*}v}, &  \Delta(\delta_{v}) &=  \delta_{vv^{*}} \otimes \delta_{v} + \delta_{v} \otimes \delta_{v^{*}v}.
\end{align*}
Now, the $*$-homomorphism 
  \begin{align*}
\delta \colon \C^{2} \to \C^{2} \otimes C(\Gamma), \quad (\alpha,\beta) \mapsto (\alpha,0) \otimes \delta_{v^{*}v} + (0,\alpha) \otimes \delta_{v},
  \end{align*}
is a partial coaction. Indeed, for all $\alpha,\beta\in \C$,
\begin{align*}
(\delta \otimes \id_{C(\Gamma)})\delta((\alpha,\beta)) &= (\alpha,0) \otimes \delta_{v^{*}v} \otimes \delta_{v^{*}v} + (0,\alpha) \otimes \delta_{v} \otimes \delta_{v^{*}v}
\end{align*}
is equal to the product of 
\begin{align*}
  \delta((1,0)) \otimes 1_{C(\Gamma)} = (1,0) \otimes \delta_{v^{*}v} \otimes 1_{C(\Gamma)} + (0,1) \otimes \delta_{v} \otimes 1_{C(\Gamma)}
\end{align*}
with
\begin{multline*}
(\id_{\C^{2}} \otimes \Delta)\delta((\alpha,\beta)) 
=  (\alpha,0) \otimes (\delta_{v^{*}v} \otimes \delta_{v^{*}} + \delta_{v^{*}} \otimes \delta_{vv^{*}})  \\ + (0,\alpha) \otimes (\delta_{v} \otimes \delta_{v^{*}v} + \delta_{vv^{*}} \otimes \delta_{v}).
\end{multline*}
But the maps $\Theta_{w} := (\id \otimes \ev_{w})\circ \delta$, where $w\in \Gamma$, are given by
\begin{align*}
  \Theta_{0} &= \Theta_{v^{*}} = \Theta_{vv^{*}} = 0, & \Theta_{v}((\alpha,\beta)) &= (0,\alpha), & \Theta_{v^{*}v}((\alpha,\beta))&= (\alpha,0);
\end{align*}
in particular, $\Theta_{v}\Theta_{v^{*}}\Theta_{v} = 0$ and $\Theta_{v}\Theta_{v^{*}v} = \Theta_{v}$.
\end{example}

\section{Partial coactions of discrete and of compact $C^{*}$-quantum groups} \label{sec:hopf}

Let $\qG=(A,\Delta)$ be a compact $C^{*}$-quantum group and denote by $\mathcal{O}(\qG) \subseteq A$ the dense Hopf subalgebra of matrix elements of finite-dimensional corepresentations. We now relate  partial (co)actions of $\qG$ and of the discrete dual $\hat{\qG}$ to partial coactions and partial actions of the  Hopf algebra $\mathcal{O}(\qG)$, respectively. Note that $(A,\Delta)$ and $(\hat{A},\hat{\Delta})$ are  regular, so that weakly continuous partial coactions automatically satisfy the Podle\'s condition by Proposition \ref{proposition:regular-weak-strong}.

Recall that a \emph{partial action} of a Hopf algebra $H$ on a unital algebra $C$ is a map
\begin{align*}
  H\otimes C \to C, \quad h \otimes c \mapsto  h\triangleright c,
\end{align*}
satisfying the following conditions:
\begin{itemize}
\item[(H1)] $1_{H} \triangleright c = c$ for all $c\in C$;
\item[(H2)] $h \triangleright (cd) = (h_{(1)} \triangleright c) (h_{(2)} \triangleright d)$ for all $h\in H$ and $c,d\in C$;
\item[(H3)] $h \triangleright (k \triangleright c) = (h_{(1)}\triangleright 1_{C})(h_{(2)}k \triangleright c)$ for all $h,k \in H$ and $c\in C$;
\end{itemize}
see \cite{caenepeel:partial}, and that such a partial action is \emph{symmetric} if additionally
\begin{itemize}
\item[(H4)]  $h \triangleright (k \triangleright c) = (h_{(1)}k \triangleright c)(h_{(2)} \triangleright 1_{C})$ for all $h,k \in H$ and $c\in C$;
\end{itemize}
see \cite{alves:representations}.  \new If additionally $h \triangleright 1_{C}=\varepsilon(h)$ for all $h\in H$, we have a genuine action; in that case, (H3) and (H4) reduce to $h \triangleright (k \triangleright c) = hk \triangleright c$. \old

Recall that the $C^{*}$-algebra  $\hat{A}$ of the discrete $C^{*}$-quantum group $\dqG$ is a $c_{0}$-direct sum of matrix algebras $\hat{A}_{\alpha}$ indexed by $\alpha \in \Irr(\qG)$.  The Hopf algebra $\mathcal{O}(\qG)$ can  be identified with the subspace of all functionals $\omega \in \hat{A}^{*}$ that vanish on $\hat{A}_{\alpha}$ for all but finitely many $\alpha \in \Irr(\qG)$, and  then 
\begin{align*}
  \Delta(\omega)(\hat{a} \otimes \hat{b}) = \omega(\hat{a}\hat{b}) \quad \text{and} \quad
  (\upsilon \omega)(\hat{a}) = (\upsilon \otimes \omega)(\hat{a})
\end{align*}
for all $\upsilon,\omega \in \mathcal{O}(\qG)$ and  $\hat{a},\hat{b} \in \hat{A}$.
\begin{theorem} \label{theorem:algebraic-discrete}
Let $\qG=(A,\Delta)$   be a compact quantum group  and let $\delta$  be  a counital partial coaction of the discrete dual $\dqG=(\hat{A},\hat{\Delta})$ on a unital $C^{*}$-algebra $C$. Then the formula
\begin{align*}
  \upsilon \otimes c \mapsto \upsilon \triangleright c =(\id_{C} \otimes \upsilon)(\delta(c)) \quad (\upsilon \in \mathcal{O}(\qG), \, c\in C)
\end{align*}
defines a symmetric partial action of the Hopf algebra $\mathcal{O}(\qG)$ on $C$.
\end{theorem}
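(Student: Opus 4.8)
The plan is to verify the four axioms (H1)--(H4) directly from the formula $\upsilon \triangleright c = (\id_{C} \otimes \upsilon)(\delta(c))$, using the two dualities recorded just before the theorem: the comultiplication of $\mathcal{O}(\qG)$ is dual to the multiplication of $\hat{A}$, so that $\Delta(\upsilon) = \upsilon_{(1)} \otimes \upsilon_{(2)}$ satisfies $\upsilon(\hat{a}\hat{b}) = \upsilon_{(1)}(\hat{a})\upsilon_{(2)}(\hat{b})$, while the convolution product of $\mathcal{O}(\qG)$ is dual to $\hat{\Delta}$, so that $(\upsilon\omega)(\hat{a}) = (\upsilon \otimes \omega)\hat{\Delta}(\hat{a})$. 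Since every $\upsilon \in \mathcal{O}(\qG)$ is supported on finitely many blocks $\hat{A}_{\alpha}$, the element $\Delta(\upsilon)$ is a genuine finite sum in $\mathcal{O}(\qG) \otimes \mathcal{O}(\qG)$; cutting $\delta(c)$ down by a finite central support projection of $\hat{A}$ turns all computations below into finite-dimensional algebra. In particular I would first record the slice identity
\begin{equation*}
  (\id_{C} \otimes \upsilon)(XY) = (\id_{C} \otimes \upsilon_{(1)})(X)\,(\id_{C} \otimes \upsilon_{(2)})(Y), \qquad X, Y \in M(C \otimes \hat{A}),
\end{equation*}
which follows from $\upsilon(\hat{a}\hat{b}) = \upsilon_{(1)}(\hat{a})\upsilon_{(2)}(\hat{b})$ by checking it on elementary tensors and using finite support.

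Condition (H1) is immediate: the unit of $\mathcal{O}(\qG)$ for the convolution product is the counit $\hat{\varepsilon}$ of $\hat{A}$ (it is the unit of the algebra $\hat{A}^{*}$, as recalled in the preliminaries), so $1_{\mathcal{O}(\qG)} \triangleright c = (\id_{C} \otimes \hat{\varepsilon})\delta(c) = c$ because $\delta$ is counital. For (H2) I would apply the slice identity with $X = \delta(c)$ and $Y = \delta(d)$: as $\delta$ is a $*$-homomorphism, $\delta(cd) = \delta(c)\delta(d)$, and the identity gives $\upsilon \triangleright (cd) = (\upsilon_{(1)} \triangleright c)(\upsilon_{(2)} \triangleright d)$. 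For (H3) I would start from the second form of \eqref{eq:delta-act}, namely $\delta(\omega \triangleright c) = \delta(1_{C})\,(\id_{C} \otimes \id_{\hat{A}} \otimes \omega)(\id_{C} \otimes \hat{\Delta})\delta(c)$, apply $\id_{C} \otimes \upsilon$, and split by the slice identity: the factor $\delta(1_{C})$ yields $\upsilon_{(1)} \triangleright 1_{C}$, while on the remaining factor the partial functional $\upsilon_{(2)}$ combines with $\omega$ through $(\upsilon_{(2)} \otimes \omega)\hat{\Delta} = \upsilon_{(2)}\omega$, giving $\upsilon \triangleright (\omega \triangleright c) = (\upsilon_{(1)} \triangleright 1_{C})(\upsilon_{(2)}\omega \triangleright c)$. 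This is (H3), and it recovers \eqref{eq:character-act} when $\upsilon$ is a character.

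The main obstacle is the symmetry condition (H4), which demands $\upsilon \triangleright (\omega \triangleright c) = (\upsilon_{(1)}\omega \triangleright c)(\upsilon_{(2)} \triangleright 1_{C})$, i.e.\ the same two factors in the \emph{opposite} order. The key point I would establish is that $\delta(1_{C}) \otimes 1_{\hat{A}}$ commutes with $(\id_{C} \otimes \hat{\Delta})\delta(c)$ for every $c \in C$:
\begin{equation*}
  (\delta(1_{C}) \otimes 1_{\hat{A}})\,(\id_{C} \otimes \hat{\Delta})\delta(c) = (\id_{C} \otimes \hat{\Delta})\delta(c)\,(\delta(1_{C}) \otimes 1_{\hat{A}}). \tag{$\star$}
\end{equation*}
This follows by taking adjoints in the partial coassociativity relation \eqref{eq:partial-coaction}. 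The adjoint of the left-hand side $(\delta \otimes \id_{\hat{A}})\delta(c)$ is $(\delta \otimes \id_{\hat{A}})\delta(c^{*})$, whereas the adjoint of the right-hand side $(\delta(1_{C}) \otimes 1_{\hat{A}})(\id_{C} \otimes \hat{\Delta})\delta(c)$ is $(\id_{C} \otimes \hat{\Delta})\delta(c^{*})\,(\delta(1_{C}) \otimes 1_{\hat{A}})$, since $\delta(1_{C})$ is a projection and $\delta,\hat{\Delta}$ are $*$-homomorphisms; comparing these with the instance of \eqref{eq:partial-coaction} for $c^{*}$ yields $(\star)$. Granting $(\star)$, I would repeat the computation of (H3) but first rewrite $\delta(\omega \triangleright c)$ as $(\id_{C} \otimes \id_{\hat{A}} \otimes \omega)\big[(\id_{C} \otimes \hat{\Delta})\delta(c)\,(\delta(1_{C}) \otimes 1_{\hat{A}})\big]$ using $(\star)$, so that the $\delta(1_{C})$-factor now sits on the right. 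Applying $\id_{C} \otimes \upsilon$ and splitting as before, the left factor produces $\upsilon_{(1)}\omega \triangleright c$ and the right factor produces $\upsilon_{(2)} \triangleright 1_{C}$, which is precisely (H4).

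In summary, (H1) is counitality, (H2) is multiplicativity of $\delta$ together with the duality between $\Delta$ on $\mathcal{O}(\qG)$ and the multiplication of $\hat{A}$, and (H3)--(H4) both come from \eqref{eq:delta-act} and the duality between convolution and $\hat{\Delta}$; the only genuinely new ingredient is the commutation relation $(\star)$, which encodes the symmetry and is forced by the $*$-structure of the partial coaction. I expect the only delicate points to be the bookkeeping required to justify the slice identity and the finite-sum manipulations rigorously inside the multiplier algebra, which is exactly why reducing to the finitely many blocks carrying the supports of $\upsilon$ and $\omega$ is essential.
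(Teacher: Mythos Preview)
Your proof is correct and follows essentially the same route as the paper: both reduce to finite-dimensional algebra via finite central support projections in $\hat{A}$, use counitality for (H1), the duality $\upsilon(\hat a\hat b)=\upsilon_{(1)}(\hat a)\upsilon_{(2)}(\hat b)$ for (H2), and partial coassociativity together with $(\upsilon\otimes\omega)\circ\hat\Delta=\upsilon\omega$ for (H3). For (H4) the paper merely writes ``a similar argument proves (H4)''; your commutation relation $(\star)$, obtained by taking adjoints in \eqref{eq:partial-coaction}, is exactly what that similar argument needs and makes explicit the step the paper leaves to the reader.
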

\begin{proof}
Condition (H1)   holds because the unit of $\mathcal{O}(\qG)$, regarded as a functional on $\hat{A}$, is the counit. Let $\upsilon,\omega \in \mathcal{O}(\qG)$ and $c,d\in C$. Choose  central projections $p,q \in \hat{A}$ such that $\upsilon(p\hat{a}) = \upsilon(\hat{a})$, $\omega(\hat{a})=\omega(q\hat{a})$ and $\upsilon_{(1)}(\hat{a})\upsilon_{(2)}(\hat{b}) = \upsilon_{(1)}(p\hat{a})\upsilon_{(2)}(p\hat{b})$ for all $\hat{a},\hat{b}\in \hat{A}$. Then
\begin{align*}
  \upsilon \triangleright cd =  (\id_{C} \otimes \upsilon)((1_{C} \otimes p)\delta(c) \delta(d)(1_{C} \otimes p)).
\end{align*}
Since $(1_{C} \otimes p)\delta(c)$ and $\delta(d)(1_{C} \otimes p)$ are contained in the  tensor product of $C$ with the finite-dimensional $C^{*}$-algebra $p\hat{A}+q\hat{A}$,  this expression  is equal to
\begin{align*}
   (\id \otimes \upsilon_{(1)})((1 \otimes p)\delta(c)) \cdot (\id \otimes \upsilon_{(2)}) (\delta(d)(1 \otimes p)) = (\upsilon_{(1)} \triangleright c)(\upsilon_{(2)} \triangleright d).
\end{align*}
Thus, condition (H2) is satisfied. Likewise,
\begin{align*}
  \upsilon \triangleright (\omega \triangleright c) &= (\id_{C} \otimes \upsilon \otimes \omega)((\delta \otimes \id_{\hat{A}})\delta(c)) \\
&= (\id_{C} \otimes \upsilon \otimes \omega)((1_{C} \otimes  p \otimes q)(\delta(1_{C}) \otimes 1_{\hat{A}})(\id_{C} \otimes \hat{\Delta})(\delta(c))),
\end{align*}
and a similar argument as above shows that this expression is equal to
\begin{align*}
  (\id_{C} \otimes \upsilon_{(1)})(\delta(1_{C}))( \id_{C} \otimes (\upsilon_{(2)} \otimes \omega) \circ \hat\Delta)(\delta(c)) = (\upsilon_{(1)} \triangleright 1_{C}) \cdot (\upsilon_{(2)}\omega \triangleright c). 
\end{align*}
Therefore, condition (H3) holds as well, and a similar argument proves (H4).
\end{proof}
Next, we consider partial coactions of the compact $C^{*}$-quantum group $(A,\Delta)$, and relate them to partial coactions of the Hopf algebra $\mathcal{O}(\qG)$. Recall that a \emph{partial coaction} of a Hopf algebra $H$ on a unital algebra $C$ is a homomorphism
\begin{align*}
  \delta \colon C \mapsto C \otimes H
\end{align*}
satisfying the following conditions,
\begin{itemize}
\item[(CH1)] $(\delta \otimes \id_{H})(\delta(c)) = (\delta(1_{C}) \otimes 1_{H}) \cdot (\id_{C} \otimes \Delta_{H})(\delta(c))$   for all $c\in C$, and
\item[(CH2)] $(\id_{C} \otimes \varepsilon_{H})(\delta_{0}(c)) = c$ for all $c\in C$;
\end{itemize}
see \cite{caenepeel}.
\begin{theorem}
  Let  $\delta$ be a partial coaction of a compact $C^{*}$-quantum group $\qG=(A,\Delta)$ on a unital $C^{*}$-algebra $C$. Then the following conditions are equivalent:
  \begin{enumerate}
  \item  $\delta$ is weakly continuous, $\delta(1_{C})$ lies in the algebraic tensor product $C\otimes \mathcal{O}(\qG)$ and $(\id_{C} \otimes   \varepsilon)(\delta(1_{C}))=1_{C}$, where $\varepsilon$ denotes the counit of $\mathcal{O}(\qG)$.
  \item  $\delta$ restricts to a partial coaction of $\mathcal{O}(\qG)$ on a unital dense $*$-subalgebra $C_{0}$ of $C$.
 \end{enumerate}
    \end{theorem}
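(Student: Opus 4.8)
I would prove the two implications separately, the second being the substantial one.

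The implication $(2)\Rightarrow(1)$ is essentially formal, so I would dispatch it first. If $\delta$ restricts to a partial coaction $\delta_{0}\colon C_{0}\to C_{0}\otimes_{\mathrm{alg}}\mathcal{O}(\qG)$ on a dense unital $*$-subalgebra $C_{0}$, then $\delta(1_{C})=\delta_{0}(1_{C_{0}})\in C_{0}\otimes\mathcal{O}(\qG)\subseteq C\otimes\mathcal{O}(\qG)$, and (CH2) evaluated at $1_{C_{0}}=1_{C}$ gives $(\id_{C}\otimes\varepsilon)\delta(1_{C})=1_{C}$. For weak continuity I would fix $c\in C_{0}$, write $\delta_{0}(c)=\sum_{i}c_{i}\otimes a_{i}$ with linearly independent $a_{i}\in\mathcal{O}(\qG)$, and use Hahn--Banach to choose $\phi\in A^{*}$ with $\phi(a_{i})=\varepsilon(a_{i})$, so that $\phi\triangleright c=(\id_{C}\otimes\varepsilon)\delta_{0}(c)=c$ by (CH2). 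Thus $C_{0}\subseteq[A^{*}\triangleright C]$, and density of $C_{0}$ forces $[A^{*}\triangleright C]=C$.

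For $(1)\Rightarrow(2)$ the plan is to build the algebraic core by hand. I would set $C_{0}'=\{c\in C:\delta(c)\in C\otimes_{\mathrm{alg}}\mathcal{O}(\qG)\}$; since $\mathcal{O}(\qG)$ is a $*$-subalgebra of $A$ with $\Delta(\mathcal{O}(\qG))\subseteq\mathcal{O}(\qG)\otimes\mathcal{O}(\qG)$ and $\delta$ is a $*$-homomorphism, $C_{0}'$ is a unital $*$-subalgebra (here the hypothesis $\delta(1_{C})\in C\otimes\mathcal{O}(\qG)$ enters). Writing $\delta(c)=\sum_{i}c_{i}\otimes a_{i}$ with the $a_{i}$ linearly independent and slicing the partial coassociativity identity in its last leg against functionals dual to the $a_{i}$ shows $\delta(c_{i})\in C\otimes_{\mathrm{alg}}\mathcal{O}(\qG)$, i.e.\ $\delta(C_{0}')\subseteq C_{0}'\otimes\mathcal{O}(\qG)$. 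The homomorphism $F:=(\id_{C}\otimes\varepsilon)\circ\delta$ is then a $*$-homomorphism on $C_{0}'$; applying $\id_{C}\otimes\varepsilon\otimes\id_{A}$ to partial coassociativity and using $F(1_{C})=1_{C}$ together with the counit axiom, I obtain $F(c_{i})=c_{i}$ for every right-leg component, so $F$ is idempotent with range the unital $*$-subalgebra $C_{0}:=\{c\in C_{0}':F(c)=c\}$, which still satisfies $\delta(C_{0})\subseteq C_{0}\otimes\mathcal{O}(\qG)$. The restriction $\delta|_{C_{0}}$ then satisfies (CH1) (this is partial coassociativity, now with all legs in algebraic tensor products) and (CH2) (by the defining property $F=\id$ on $C_{0}$).

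It remains to prove that $C_{0}$ is dense, which is the heart of the matter. Let $h$ be the faithful Haar state of $\qG$ and, for $a\in\mathcal{O}(\qG)$, put $\Phi_{a}(c):=(\id_{C}\otimes h)(\delta(c)(1_{C}\otimes a))=h(\cdot\,a)\triangleright c$. Expanding $\delta(\Phi_{a}(c))$ by partial coassociativity yields
\[
\delta(\Phi_{a}(c))=(\id_{C}\otimes\id_{A}\otimes h)\big[(\delta(1_{C})\otimes 1_{A})(\id_{C}\otimes\Delta)\delta(c)\,(\delta(1_{C})\otimes a)\big],
\]
and the structural fact that $x\mapsto(\id_{A}\otimes h)(\Delta(x)(1_{A}\otimes a))$ maps $A$ into a finite-dimensional subspace of $\mathcal{O}(\qG)$ for $a\in\mathcal{O}(\qG)$, combined once more with $\delta(1_{C})\in C\otimes\mathcal{O}(\qG)$, shows $\delta(\Phi_{a}(c))\in C\otimes_{\mathrm{alg}}\mathcal{O}(\qG)$; hence $\Phi_{a}(c)\in C_{0}'$. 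Since $h$ is faithful and $\mathcal{O}(\qG)$ is dense, the functionals $\{h(\cdot\,a):a\in\mathcal{O}(\qG)\}$ separate the points of $A$, so Remark \ref{remark:weak-continuity-hahn-banach} together with weak continuity gives $[\{h(\cdot\,a)\}\triangleright C]=[A^{*}\triangleright C]=C$; as all these elements lie in $C_{0}'$, the core $C_{0}'$ is dense. Finally $C_{0}$ is the linear span of $A^{*}\triangleright C_{0}'$, and since the slice maps $c\mapsto\omega\triangleright c$ are norm-bounded, density of $C_{0}'$ and weak continuity yield $\overline{C_{0}}=[A^{*}\triangleright C_{0}']=[A^{*}\triangleright C]=C$. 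The main obstacle will be exactly this density, and within it the verification that the Haar-state slices $\Phi_{a}(c)$ remain in the algebraic core: this is where the compact-quantum-group structure enters through the Peter--Weyl orthogonality relations controlling $(\id_{A}\otimes h)(\Delta(x)(1_{A}\otimes a))$, and where the hypothesis $\delta(1_{C})\in C\otimes\mathcal{O}(\qG)$ is indispensable for absorbing the $\delta(1_{C})$-correction in partial coassociativity.
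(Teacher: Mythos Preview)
Your argument is correct, and for $(2)\Rightarrow(1)$ it coincides with the paper's almost verbatim. For $(1)\Rightarrow(2)$ the two proofs share the same engine---slicing $\delta(c)$ against ``polynomial'' functionals lands in the algebraic core, and weak continuity makes such slices dense---but they are organised differently. The paper defines $C_{0}$ directly as $\mathcal{O}(\dqG)\triangleright C$ (your Haar-state slices $h(\cdot\,a)$ with $a\in\mathcal{O}(\qG)$ span the same space of functionals by Schur orthogonality), then verifies by hand that this is a subalgebra and that $\delta(C_{0})\subseteq C_{0}\otimes\mathcal{O}(\qG)$ using the matrix-coefficient basis $(u^{\alpha}_{ij})$. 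You instead take the maximal ``algebraic locus'' $C_{0}'=\{c:\delta(c)\in C\otimes_{\mathrm{alg}}\mathcal{O}(\qG)\}$, which is a $*$-subalgebra for free, and then cut down via the idempotent $F=(\id\otimes\varepsilon)\delta$ to force (CH2). Your route avoids the explicit subalgebra computation but requires the extra refinement step and a second density argument; the paper's route is more direct but needs the Peter--Weyl basis manipulation to show closure under multiplication. Two small remarks: the extra right factor $\delta(1_{C})$ in your displayed formula for $\delta(\Phi_{a}(c))$ is redundant (though harmless, since $(\delta\otimes\id)(x)\cdot(\delta(1_{C})\otimes 1)=(\delta\otimes\id)(x)$), and your appeal to faithfulness of the Haar state to separate points of $A$ is exactly parallel to the paper's implicit use of the same fact when it invokes Remark~\ref{remark:weak-continuity-hahn-banach} for $\mathcal{O}(\dqG)\subset A^{*}$.
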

\begin{proof}
Denote by $\mathcal{O}(\dqG) \subseteq \hat{A}$ the algebraic direct sum of the matrix algebras $\hat{A}_{\alpha}$ associated to all $\alpha \in \Irr(\qG)$, and recall that we can canonically identify $\mathcal{O}(\qG)$ with a subspace of $A^{*}$.

  (1)$\Rightarrow$(2): By Remark \ref{remark:weak-continuity-hahn-banach}, the subspace $C_{0}=\mathcal{O}(\dqG) \triangleright C$ of $C$ is dense.  We show that $C_{0} \subseteq C$ is a  subalgebra. Let $c,d\in C$ and $\upsilon,\omega \in \mathcal{O}(\qG)$. Then
\begin{align*}
  (\upsilon \triangleright c)(\omega \triangleright d) = (\id_{C} \otimes \upsilon \otimes \omega)(\delta(c)_{12}\delta(d)_{13}),
\end{align*}
where we use the leg notation on $\delta(c)$ and $\delta(d)$. Now, we find finitely many $\upsilon'_{i},\omega'_{i} \in \mathcal{O}(\dqG)$ such that
\begin{align*}
  \upsilon(a) \omega(b) = \sum_{i}(\upsilon'_{i} \otimes \omega'_{i})((a \otimes 1_{A})\Delta(b))
\end{align*}
for all $a,b\in A$, and then
\begin{align*}
  (\upsilon \triangleright c)(\omega \triangleright d) &= \sum_{i}(\id_{C} \otimes \upsilon'_{i} \otimes \omega'_{i})((\delta(c) \otimes 1_{A})(\id \otimes \Delta)(\delta(d))) \\
&= \sum_{i}(\id_{C} \otimes \upsilon'_{i} \otimes \omega'_{i}) (\delta \otimes \id_{A})((c \otimes 1_{A})\delta(d)) = \sum_{i} \upsilon'_{i} \triangleright (c(\omega'_{i} \triangleright d)) \in C_{0}.
\end{align*}

Next, we show that $\delta(C_{0})$ is contained in the algebraic tensor product $C \otimes \mathcal{O}(\qG)$.   Let $\omega \in \mathcal{O}(\dqG)$ and $c\in C$. 
Since $\mathcal{O}(\qG)$ has a basis of elements $(u^{\alpha}_{i,j})_{\alpha,i,j}$ satisfying $\Delta(u^{\alpha}_{i,j}) = \sum_{k} u^{\alpha}_{ik}\otimes u^{\alpha}_{kj}$ \cite[Proposition 5.1]{woronowicz}, we can find finitely many $\upsilon_{1},\ldots,\upsilon_{n} \in \mathcal{O}(\dqG)$ and $a_{1},\ldots,a_{n} \in \mathcal{O}(\qG)$ such that
\begin{align*}
  (\id_{A} \otimes \omega)(\Delta(b)) = \sum_{i=1}^{n} \upsilon_{i}(b)a_{i}
\end{align*}
for all $b\in \mathcal{O}(\qG)$, and then
\begin{align*}
  \delta(\omega \triangleright c) &= (\id_{C} \otimes  \id_{A} \otimes \omega)(\delta \otimes \id_{A})\delta(c) \\
  &= \delta(1_{C}) (\id_{C} \otimes (\id_{A} \otimes \omega)\Delta)\delta(c) 
 = \delta(1_{C}) \cdot \sum_{i=1}^{n} (\upsilon_{i} \triangleright c) \otimes a_{i}
\end{align*}
lies in the algebraic tensor product of $C$ with $\mathcal{O}(\qG))$. Using a basis for $\mathcal{O}(\dqG)$ consisting of functionals $(\phi^{\alpha}_{i,j})_{\alpha,i,j}$ such that $\phi^{\alpha}_{i,j}(u^{\beta}_{k,l}) = \delta_{\alpha,\beta} \delta_{i,k} \delta_{j,l}$, see \cite[\S 6]{woronowicz}, we see that $\delta(C_{0})$ is contained in the algebraic tensor product $C_{0} \otimes \mathcal{O}(\qG)$.

To finish the proof, note that with $\omega,c$ as above, \eqref{eq:character-act} implies
\begin{align*}
  \varepsilon \triangleright (\omega \triangleright c) 
= (\id_{C} \otimes \varepsilon)(\delta(1_{C})) \cdot (\omega \triangleright c) = \omega \triangleright c.
\end{align*}

(2)$\Rightarrow$(1): Since $C_{0} \subseteq C$ is dense, the unit of $C_{0}$ has to be $1_{C}$, whence $\delta(1_{C})$ lies in the algebraic tensor product $C \otimes  \mathcal{O}(\qG)$ and $(\id \otimes \varepsilon)\delta(1_{C})=1_{C}$. To prove weak continuity, we  show that for every $c\in C_{0}$, there exists some $\omega \in A^{*}$ such that $\omega \triangleright c = c$. So, take $c \in C_{0}$ and write $\delta(c) = \sum_{i=1}^{n} d_{i} \otimes a_{i}$ with $d_{i} \in C_{0}$ and $a_{i} \in \mathcal{O}(\qG)$.  By Hahn-Banach,  the restriction of $\varepsilon$ to the finite-dimensional subspace of $A$ spanned by $a_{1},\ldots,a_{n}$ extends to a bounded linear functional $\omega \in A^{*}$  that satisfies $\omega \triangleright c = \varepsilon \triangleright c = c$.
\end{proof}

\section{Restriction} \label{sec:restriction-morphisms}

 Like partial actions of groups and partial (co)actions of Hopf algebras, partial coactions of $C^{*}$-bialgebras can be obtained from non-partial ones by restriction. 
\begin{definition}
   Let $\delta_{B}$ be a partial coaction of a $C^{*}$-bialgebra $(A,\Delta)$ on a $C^{*}$-algebra $B$.
We call a  $C^{*}$-subalgebra $C\subseteq B$  \emph{weakly invariant} if
\begin{align*}
  \delta_{B}(C)(C \otimes A) \subseteq C\otimes A,
\end{align*}
and \emph{strongly invariant} if the embedding $C \hookrightarrow B$  strict and
  $\delta_{B}(C) \subseteq M(C\otimes A) \subseteq M(B\otimes A)$.
  \end{definition}
Note here that if the embedding $C\hookrightarrow B$ is strict, then  the embedding $C\otimes A \hookrightarrow B\otimes A$ is strict as well and extends to an embedding  $M(C\otimes A) \hookrightarrow M(B\otimes A)$.
\begin{remark} \label{remark:invariant}
\begin{enumerate}
  \item Every ideal $C \subseteq B$  is weakly invariant, but not necessarily strongly invariant.
\item A corner $C\subseteq B$ is strongly invariant if and only if $1_{C} \in M(C) \subseteq M(B)$  is strongly invariant in the sense that
\begin{align*}
  \delta_{B}(1_{C})=\delta_{B}(1_{C})(1_{C}\otimes 1_{A}),
\end{align*}
 as one can easily check. 
    If one thinks of elements of $M(B)$ and $M(B\otimes A)$ as $2\times 2$-matrices with respect to the Peirce decomposition $B=1_{C}B+(1_{B}-1_{C})B$, then strong invariance of $C$ means that  $\delta_{B}(C)$ is contained in the  upper left corner, while weak invariance of $C$  means that the off-diagonal part of $\delta_{B}(C)$  vanishes.
  \end{enumerate}
\end{remark}
\begin{example} 
  Suppose that $\delta_{B}$ is the partial coaction corresponding to a disconnected partial action $((p_{g})_{g\in \Gamma}, (\theta_{g})_{g\in \Gamma})$ of a discrete group $\Gamma$ on a $C^{*}$-algebra $B$ as in Proposition \ref{proposition:partial-group-action}, and that $C \subseteq B$ is a direct summand. Then $C$ is automatically weakly invariant, but strongly invariant if and only if $\theta_{g}(p_{g^{-1}}C) \subseteq C$ for all $g\in \Gamma$.
\end{example}
Evidently, partial coactions can be restricted to strongly invariant $C^{*}$-subalgebras.
 Restriction to  weakly invariant $C^{*}$-subalgebras is a bit more delicate unless the embedding of the $C^{*}$-subalgebra is strict.
\begin{proposition} \label{proposition:restriction}
   Let $\delta_{B}$ be a partial coaction of a $C^{*}$-bialgebra $(A,\Delta)$ on a $C^{*}$-algebra $B$ and let $C \subseteq B$ be   a weakly invariant $C^{*}$-subalgebra. Then:
   \begin{enumerate}
   \item $\delta_{B}$ restricts to a $*$-homo\-morphism $\delta_{C}\colon C \to M(C \otimes A)$.
   \item If the embedding $C\hookrightarrow B$ is strict, then the composition of $\delta_{C}$ with the embedding of $M(C \otimes A)$ into $M(B\otimes A)$ is strict and
     \begin{align*}
       \delta_{C}(c) = \delta_{B}(c)(1_{C} \otimes 1_{A}) \quad (c\in C).
     \end{align*}
   \item  If $\delta_{C}$ is strict, then it is a partial coaction of $(A,\Delta)$ on $C$.
   \end{enumerate}
\end{proposition}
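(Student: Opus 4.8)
The plan is to handle the three parts in sequence; (1) and (2) are essentially formal, and the genuine work is hidden in axiom~(1) of part~(3).

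For part~(1) the point is simply that weak invariance turns each $\delta_B(c)$, $c\in C$, into a two-sided multiplier of $C\otimes A$. Weak invariance gives the left-multiplier property $\delta_B(c)(C\otimes A)\subseteq C\otimes A$ directly. Taking adjoints and using that $C\otimes A$ is self-adjoint, that $C=C^{*}$, and that $\delta_B$ is a $*$-homomorphism, I get $(C\otimes A)\delta_B(C)\subseteq C\otimes A$. Hence $x\mapsto \delta_B(c)x$ and $x\mapsto x\delta_B(c)$ define a multiplier $\delta_C(c)\in M(C\otimes A)$, and $\delta_C$ is visibly a $*$-homomorphism.

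For part~(2) I fix an approximate unit $(u_\lambda)$ of $C\otimes A$. Since the embedding $C\otimes A\hookrightarrow B\otimes A$ is strict, $(u_\lambda)$ converges strictly in $M(B\otimes A)$ to $1_C\otimes 1_A$. Writing $\iota$ for the induced embedding $M(C\otimes A)\hookrightarrow M(B\otimes A)$ and using that it is strictly continuous on bounded sets, together with $\delta_C(c)u_\lambda=\delta_B(c)u_\lambda\in C\otimes A$, I obtain $\iota(\delta_C(c))=\lim_\lambda \delta_B(c)u_\lambda=\delta_B(c)(1_C\otimes 1_A)$. The composition $\iota\circ\delta_C$ is therefore $c\mapsto \delta_B(c)(1_C\otimes 1_A)$, and it is strict because $\delta_B$ is strict and right multiplication by the fixed multiplier $1_C\otimes 1_A$ is strictly continuous on the unit ball.

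For part~(3) I must verify the two axioms of a partial coaction. The hard part is the condition $\delta_C(C)(1_C\otimes A)\subseteq C\otimes A$: here one must land inside the \emph{norm-closed} algebra $C\otimes A$, not merely in $M(C\otimes A)$ or in the ambient $B\otimes A$, and a naive approximation of $1_C$ by an approximate unit only produces strict convergence. The idea that makes it work is to use part~(2), giving $\iota(\delta_C(c)(1_C\otimes a))=\delta_B(c)(1_C\otimes a)$, and to observe that for an approximate unit $(e_j)$ of $C$ one has $\delta_B(c)(e_j\otimes a)=\big(\delta_B(c)(1_B\otimes a)\big)(e_j\otimes 1_A)$. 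Now $\delta_B(c)(1_B\otimes a)\in B\otimes A$ by axiom~(1) for $\delta_B$, and $e_j\otimes 1_A\to 1_C\otimes 1_A$ strictly in $M(B\otimes A)$; since right multiplication of a fixed element of $B\otimes A$ by a strictly convergent net converges in \emph{norm}, I conclude $\delta_B(c)(e_j\otimes a)\to \delta_B(c)(1_C\otimes a)$ in norm. Each term lies in $C\otimes A$ by weak invariance, so the limit $\delta_B(c)(1_C\otimes a)$, hence $\delta_C(c)(1_C\otimes a)$, lies in $C\otimes A$. This norm-versus-strict upgrade is the crux of the whole proposition.

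Finally, to transport partial coassociativity I would apply the injective strict embedding $\iota_2\colon M(C\otimes A\otimes A)\hookrightarrow M(B\otimes A\otimes A)$ and the identity of part~(2), computing both sides of $(\delta_C\otimes\id_A)\delta_C(c)=(\delta_C(1_C)\otimes 1_A)(\id_C\otimes\Delta)\delta_C(c)$ in $M(B\otimes A\otimes A)$ and reducing them to the partial coassociativity of $\delta_B$. The only delicate bookkeeping is matching the projection $\delta_C(1_C)$ with the compression of $\delta_B(1_B)$ by $1_C\otimes 1_A\otimes 1_A$, which is routine. Since $\delta_C$ is strict by hypothesis, these two verified axioms show that $\delta_C$ is a partial coaction of $(A,\Delta)$ on $C$.
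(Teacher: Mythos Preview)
Your treatment of (1) and (2) matches the paper's. The problem is in part (3): your argument repeatedly invokes part (2), and hence the hypothesis that the embedding $C\hookrightarrow B$ is strict, but (3) does \emph{not} assume this. The remark immediately following the proposition in the paper makes explicit that $\delta_C$ can be strict even when $C\hookrightarrow B$ is not (take $\delta_B$ trivial and $C$ a closed ideal that is not a direct summand). In that situation there is no projection $1_C\in M(B)$, no strict embedding $M(C\otimes A)\hookrightarrow M(B\otimes A)$, and your convergence $e_j\otimes 1_A\to 1_C\otimes 1_A$ in $M(B\otimes A)$ is meaningless. Both your verification of $\delta_C(C)(1_C\otimes A)\subseteq C\otimes A$ and your verification of partial coassociativity rest on this illegitimate reduction to $M(B\otimes A)$.

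The paper's argument for coassociativity avoids this by working entirely inside $M(C\otimes A\otimes A)$: one multiplies on the left by $c'\otimes 1_A\otimes 1_A$ with $c'\in C$, uses that $\delta_C(c)(u_\nu\otimes 1_A)$ and $\delta_B(c)(u_\nu\otimes 1_A)$ agree as multipliers of $C\otimes A$, and then passes to the strict limit $u_\nu\to 1_C$ using only that $\delta_C$ (hence $\delta_C\otimes\id_A$) is strict. No reference to $M(B\otimes A)$ or to $1_C$ as a multiplier of $B$ is needed. You should rework (3) along these lines. (Your observation that axiom (1) deserves an argument is fair---the paper is terse there---but the argument you give is not valid at the stated level of generality.)
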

\begin{proof}
  
  (1) This  follows immediately from the definition.

  (2) Suppose that the  embedding $C\hookrightarrow B$ is strict. Then so is its composition with $\delta_{B}$ and  hence also $\delta_{C}$. To prove the formula given for $\delta_{C}(c)$, choose a bounded approximate unit $(u_{\nu})_{\nu}$ for $C$, and note that $\delta_{C}(c)(u_{\nu} \otimes 1_{A}) = \delta_{B}(c)(u_{\nu} \otimes 1_{A})$ converges strictly to $\delta_{C}(c)$ in $M(C\otimes A)$ and to $\delta_{B}(c)(1_{C} \otimes 1_{A})$ in $M(B \otimes A)$.

(3) Let $(u_{\nu})_{\nu}$ be as above and let $c,c' \in C$. Then by definition of $\delta_{C}$,
\begin{align*}
(c' \otimes 1_{A} \otimes 1_{A}) \cdot& (\delta_{C} \otimes \id_{A})(\delta_{C}(c)(u_{\nu} \otimes 1_{A}))  \\
 &=
(c' \otimes 1_{A} \otimes 1_{A}) \cdot (\delta_{C} \otimes \id_{A})(\delta_{B}(c)(u_{\nu} \otimes 1_{A}))  \\
&= (c' \otimes 1_{A} \otimes 1_{A}) \cdot (\delta_{B} \otimes \id_{A})(\delta_{B}(c)) \cdot (\delta_{C}(u_{\nu}) \otimes 1_{A}) \\
&=  (\id_{C} \otimes \Delta)((c' \otimes 1_{A})\delta_{B}(c)) \cdot  (\delta_{C}(u_{\nu}) \otimes 1_{A}) \\
&= (c' \otimes 1_{A} \otimes 1_{A}) \cdot  (\id_{C} \otimes \Delta)(\delta_{C}(c)) \cdot  (\delta_{C}(u_{\nu}) \otimes 1_{A}).
\end{align*}
Since $c'\in C$ was arbitrary, we can conclude that
\begin{align*}
   (\delta_{C} \otimes \id_{A})(\delta_{C}(c)(u_{\nu} \otimes 1_{A}))   = (\id_{C} \otimes \Delta)(\delta_{C}(c)) \cdot  (\delta_{C}(u_{\nu}) \otimes 1_{A}).
\end{align*}
As $\nu$ tends to infinity, $\delta_{C}(c)(u_{\nu} \otimes 1_{A})$ converges strictly to $\delta_{C}(c)$, and since $\delta_{C}$ and  hence also $\delta_{C} \otimes \id_{A}$ are strict, the left hand side converges to $(\delta_{C} \otimes \id_{A})\delta_{C}(c)$ and the right hand side converges to $(\id_{C} \otimes \Delta)(\delta_{C}(c)) (\delta_{C}(1_{C}) \otimes 1_{A})$.
\end{proof}
\begin{remark}
  \begin{enumerate}
  \item As a corollary, a (partial) coaction on a $C^{*}$-algebra $C$ restricts to a partial coaction on every direct summand of $C$ \new because every direct summand is  weakly invariant by Remark \ref{remark:invariant} (1). \old
  \item The restriction $\delta_{C}$ can be strict without the embedding $C \hookrightarrow B$ being  strict, for example, this is the case if $\delta_{B}$ is the trivial coaction $b \mapsto b \otimes 1_{A}$ and $C \subseteq B$ is a closed ideal that is not a direct summand.
  \end{enumerate}
\end{remark}
\begin{example}
Let $\qG=(A,\Delta)$ be a discrete quantum group, so that $A$ is a $c_{0}$-sum of matrix algebras $A_{\alpha}$ with $\alpha \in \Irr(\hat{\qG})$.  Then
for every subset $\mathcal{J} \subseteq \Irr(\hat{\qG})$, the restriction of  $\Delta$ to the $c_{0}$-sum $A_{J}:=\bigoplus_{\alpha\in J} A_{\alpha}$  yields a partial coaction.  But if $\mathcal{J}$ is non-trivial, then $A_{J}$ is not strongly invariant: if $\alpha \not\in \mathcal{J}$ and $\gamma \in \mathcal{J}$, then    $\alpha \otimes  (\alpha^{\dagger} \otimes \gamma)$, where $\alpha^{\dagger}$ denotes the dual of $\alpha$, contains $\gamma$, and hence $\Delta(A_{\gamma})(A_{\alpha} \otimes 1)\neq 0$.
\end{example}

Closely related to the concept of restriction is the notion of a  morphism of partial coactions.
\begin{definition} \label{definition:morphism}
  Let $\delta_{B}$ and $\delta_{C}$ be partial coactions of a $C^{*}$-bialgebra $(A,\Delta)$ on $C^{*}$-algebras $B$ and $C$, respectively.
 A \emph{strong morphism} from $\delta_{C}$ to $\delta_{B}$ is a strict $*$-homo\-morphism $\pi \colon C\to M(B)$ satisfying 
  \begin{align*}
    (\pi \otimes \id_{A})\delta_{C}(c) = \delta_{B}(\pi(c)) \quad (c\in C).
  \end{align*}
 A \emph{weak morphism} from $\delta_{C}$ to $\delta_{B}$ is a $*$-homomorphism $\pi \colon C \to M(B)$ satisfying
  \begin{align*}
    (\pi \otimes \id_{A})(\delta_{C}(c)(c' \otimes a)) = \delta_{B}(\pi(c))(\pi(c') \otimes a)  \quad (c,c'\in C, \, a\in A).
  \end{align*}
We call such a weak or strong morphism $\pi$  \emph{proper} if $\pi(C) \subseteq B$.
\end{definition} 
Evidently, partial coactions with strong morphisms or with proper weak morphisms as above form categories.
\begin{remark}  
    \begin{enumerate}
  \item Clearly, $\pi$ is a strong or a weak morphism if and only if 
    \begin{align} \label{eq:morphism-act}
      \pi(\omega \triangleright c) = \omega \triangleright \pi(c) \quad \text{or} \quad \pi(\omega \triangleright c)\pi(c') =  (\omega \triangleright \pi(c)) \pi(c')
    \end{align}
respectively, for all $\omega \in A^{*}$  and $c,c'\in C$.
    \item If $\pi$ is a weak or a strong morphism  and proper, then its image  is weakly or strongly invariant, respectively.
  \item  Suppose that $\delta_{B}$ is a partial coaction of $(A,\Delta)$ on a $C^{*}$-algebra  $B$ and that  $C \subseteq B$ is a  $C^{*}$-subalgebra  that is weakly or strongly invariant. If the embedding $C\hookrightarrow B$ is strict, then  this embedding is a weak or a strong morphism with respect to the restriction of $\delta_{B}$ to $C$ defined above.
  \end{enumerate}
\end{remark}
Let us look at the special case of partial coactions associated to disconnected partial group actions.
\begin{proposition} \label{proposition:group-morphism}
Let $B$ and $C$ be two  $C^{*}$-algebras  with  disconnected partial actions $((p_{g})_{g},(\beta_{g})_{g})$ and $((q_{g})_{g},(\gamma_{g})_{g})$, respectively, of a discrete group $\Gamma$. With respect to the associated partial coactions of $C_{0}(\Gamma)$, a strict $*$-homomorphism $\pi \colon B \to M(C)$ is
 a strong morphism if and only if
 \begin{align} \label{eq:group-morphism-strong}
   \pi(p_{g}) = q_{g}\pi(1_{C}) \quad \text{and} \quad \pi \circ \beta_{g} \subseteq \gamma_{g} \circ \pi \quad \text{for all } g\in \Gamma,
 \end{align}
and a weak morphism if and only if
\begin{align}\label{eq:group-morphism-weak}
  \pi(1_{C})\gamma_{g}(q_{g^{-1}}\pi(1_{C})) = \pi(p_{g}) = \gamma_{g}(\pi(p_{g^{-1}})) \ \text{and} \ \pi \circ \beta_{g} \subseteq \gamma_{g} \circ \pi \ \text{for all } g\in \Gamma.
\end{align}
  \end{proposition}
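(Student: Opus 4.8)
The plan is to translate both equivalences into pointwise statements via the characterization \eqref{eq:morphism-act} of weak and strong morphisms, and then to read off the displayed identities. Since the evaluations $\ev_g$, $g\in\Gamma$, separate the points of $C_0(\Gamma)$, Remark \ref{remark:weak-continuity-hahn-banach} lets me test the morphism conditions only against the functionals $\omega=\ev_g$. By Proposition \ref{proposition:partial-group-action}, $\ev_g\triangleright b=\beta_g(p_{g^{-1}}b)$ and $\ev_g\triangleright c=\gamma_g(q_{g^{-1}}c)$, and I abbreviate $e:=\pi(1_B)\in M(C)$, the projection coming from the strict extension of $\pi$; throughout I use that $\pi(p_g)=\pi(p_g)e\le e$ and that each $q_g$ is central in $M(C)$. (I read the ``$\pi(1_C)$'' in the statement as this $e=\pi(1_B)$.)

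For the strong case, \eqref{eq:morphism-act} shows that $\pi$ is a strong morphism iff $\pi(\beta_g(p_{g^{-1}}b))=\gamma_g(q_{g^{-1}}\pi(b))$ for all $g,b$. First I would put $b=1_B$ to get $\pi(p_g)=\gamma_g(q_{g^{-1}}e)\in q_gM(C)$, so $\pi(p_g)\le q_g$; restricting to $x\in p_{g^{-1}}B$, where $q_{g^{-1}}\pi(x)=\pi(x)$ by this bound at $g^{-1}$, the relation becomes $\pi(\beta_g x)=\gamma_g(\pi(x))$, i.e.\ $\pi\circ\beta_g\subseteq\gamma_g\circ\pi$. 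Feeding this back and using injectivity of $\gamma_g$ on the corner $q_{g^{-1}}M(C)$ reduces the relation to $\pi(p_{g^{-1}})\pi(b)=q_{g^{-1}}\pi(b)$ for all $b$, whose value at $b=1_B$ is the identity $\pi(p_g)=q_g\pi(1_B)$. The converse is the same chain read backwards.

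For the weak case, \eqref{eq:morphism-act} gives that $\pi$ is a weak morphism iff $\pi(\beta_g(p_{g^{-1}}b))\pi(b')=\gamma_g(q_{g^{-1}}\pi(b))\pi(b')$ for all $g,b,b'$; since $\pi$ is non-degenerate, $[\pi(B)C]=\pi(1_B)C$, so I may cancel $b'$ and work with the single relation $\pi(\beta_g(p_{g^{-1}}b))=\gamma_g(q_{g^{-1}}\pi(b))\,\pi(1_B)$. Putting $b=1_B$ and taking adjoints yields the first identity $\pi(1_B)\gamma_g(q_{g^{-1}}\pi(1_B))=\pi(p_g)$, and again $\pi(p_g)\le q_g$. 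The second identity $\pi(p_g)=\gamma_g(\pi(p_{g^{-1}}))$ I would get by rewriting $\pi(p_{g^{-1}})=\gamma_{g^{-1}}(q_ge)\,(q_{g^{-1}}e)$ as a product inside $q_{g^{-1}}M(C)$ and applying the homomorphism $\gamma_g$ together with $\gamma_g\gamma_{g^{-1}}=\id$. Once this is available, for $x\in p_{g^{-1}}B$ one checks $\gamma_g(\pi(x))=\gamma_g(\pi(x))\pi(p_g)$, so the trailing $\pi(1_B)$ in the single relation is redundant and the relation sharpens to $\pi\circ\beta_g\subseteq\gamma_g\circ\pi$. Conversely, these three conditions recombine to the weak-morphism relation by the same manipulations.

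The main obstacle is precisely the projection bookkeeping in the weak case: non-degeneracy of $\pi$ forces every relation to appear decorated by the corner projection $\pi(1_B)$, and the work lies in stripping this decoration to obtain the clean identity $\pi(p_g)=\gamma_g(\pi(p_{g^{-1}}))$ and the exact intertwining. The resolution hinges on combining three structural facts — centrality of the $q_g$ (so they commute with $\pi(1_B)$ and $q_{g^{-1}}e$ is again a projection), multiplicativity of $\gamma_g$ on the corner $q_{g^{-1}}C$ with $\gamma_g\gamma_{g^{-1}}=\id$, and non-degeneracy of $\pi$ — and I expect the careful tracking of which projection sits on the left and which on the right to be the only real source of friction; the strong case, where no corner projection intervenes, is comparatively immediate.
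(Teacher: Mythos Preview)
Your proposal is correct and follows essentially the same route as the paper's proof: both reduce the morphism conditions to the pointwise relations obtained by slicing with $\ev_g$, then specialize $b$ to $1_B$ (and $p_{g^{-1}}$) to extract the projection identities, and finally use centrality of the $q_g$ together with $\gamma_g\gamma_{g^{-1}}=\id$ on the appropriate corner to pass between $\pi(p_g)$ and $\gamma_g(\pi(p_{g^{-1}}))$. The only cosmetic differences are that the paper places the corner projection $\pi(1_B)$ on the left in the weak relation (you put it on the right and take adjoints, which is equivalent since the left-hand side lies in $\pi(1_B)M(C)\pi(1_B)$), and that the paper derives the second weak identity by applying $\gamma_{g^{-1}}$ to $\pi(p_g)$ whereas you apply $\gamma_g$ to the factorized form of $\pi(p_{g^{-1}})$; your observation that the paper's ``$\pi(1_C)$'' should be read as $\pi(1_B)$ is also correct.
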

 \begin{proof}
Denote the partial coactions by $\delta_{B}$ and $\delta_{C}$.

(1)    Suppose that $\pi$ is a strong morphism. Then the  definition of $\delta_{B}$ and $\delta_{C}$ implies
\begin{align} \label{eq:group-strong}
  (\pi \circ \beta_{g})(p_{g^{-1}}b) = (\pi \otimes \ev_{g}) \delta_{B}(b) =  (\id_{C} \otimes \ev_{g}) \delta_{C}(\pi(b))  =  \gamma_{g}(q_{g^{-1}} \pi(b))
\end{align}
for all $g \in \Gamma$ and $b\in B$. Taking $b=1_{C}$  or  $b=p_{g^{-1}}$, we conclude  that
\begin{align*}
\gamma_{g}(q_{g^{-1}}\pi(p_{g^{-1}}))   = \pi(p_{g}) = \gamma_{g}(q_{g^{-1}}\pi(1_{C})),
\end{align*}
 in particular, $\pi(p_{g})q_{g}=\pi(p_{g})$. We use this  relation  on the left hand side above, apply $\gamma_{g^{-1}}$, and get  $\pi(p_{g})=q_{g}\pi(1_{C})$. Moreover,  $\pi(p_{g}B) \subseteq q_{g}C$, and \eqref{eq:group-strong} implies    $\pi \circ \beta_{g} \subseteq \gamma_{g} \circ \pi$.

Conversely, the first relation in \eqref{eq:group-morphism-strong} implies $q_{g^{-1}}\pi(1_{C}-p_{g^{-1}}) = 0$, whence both sides in \eqref{eq:group-strong} are zero for all $b \in (1-p_{g^{-1}})B$, and the second relation in  \eqref{eq:group-morphism-strong} implies that \eqref{eq:group-strong} holds for all $b \in p_{g^{-1}}B$. Combined,  \eqref{eq:group-morphism-strong} implies $(\pi \otimes \id)\delta_{B}=\delta_{C} \circ \pi$.

(2) Suppose that $\pi$ is a strict weak morphism.  As in (1), we find that 
\begin{align} \label{eq:group-weak}
  (\pi \circ \beta_{g})(p_{g^{-1}}b) = \pi(1_{C})  \gamma_{g}(q_{g^{-1}} \pi(b))
\end{align} 
 for all  $g\in \Gamma$ and $b\in B$, and similar arguments as in (1)  yield the first equation in \eqref{eq:group-morphism-weak}.  Now, we apply $\gamma_{g^{-1}}$ to this relation and find that
 \begin{align*}
   \gamma_{g^{-1}}(\pi(p_{g})) = \gamma_{g^{-1}}(q_{g}\pi(1_{c})) \pi(1_{C}) = \pi(p_{g^{-1}}).
 \end{align*}
In particular, this relation and \eqref{eq:group-weak} imply the second relation in  \eqref{eq:group-morphism-weak}. 

 Conversely,  \eqref{eq:group-morphism-weak} implies that both sides of \eqref{eq:group-weak} coincide for all $b\in p_{g^{-1}}B$, and that for all $b\in (1_{C}-p_{g^{-1}})B$, 
 \begin{align*}
  \pi(1_{C})\gamma_{g}(q_{g^{-1}}\pi(1_{C}-p_{g^{-1}})) = \pi(p_{g}) -\pi(p_{g})=0,
 \end{align*}
whence both sides of \eqref{eq:group-weak} are zero for all $b\in (1_{C}-p_{g^{-1}})B$. But this implies that $(\pi \otimes \id)\circ \delta_{B} = (\pi(1_{C}) \otimes 1_{A})(\delta_{C} \circ \pi)$. 
 \end{proof}

\new
\section{The Bernoulli shift of a discrete quantum group}
\label{section:bernoulli}

The Bernoulli shift of a discrete group $\Gamma$ is its action on the power set $\mathcal{P}(\Gamma)$, which we identify  with the infinite product $\{0,1\}^{\Gamma}$, by left translation. Restriction to the subsets containing the unit $e_{\Gamma}$ yields an important example of a  partial action. To a discrete quantum group, we now associate a quantum Bernoulli shift and obtain, by restriction, a partial coaction that is initial in a natural sense.

The space $\{0,1\}^{\Gamma}\cong \mathcal{P}(\Gamma)$ parametrizes all maps from $\Gamma$ to $\{0,1\}$ or, equivalently, all subsets of $\Gamma$, which correspond to projections in $M(C_{0}(\Gamma))$.    Given a  discrete quantum group $\qG$, it is natural to define its quantum power set as a universal quantum family of maps from $\qG$ to $\{0,1\}$   in the sense of \cite{soltan:quantum-families} or, equivalently, as the unital $C^{*}$-algebra $C$ that comes with a universal projection in $M(C \otimes C_{0}(\qG))$. However, we need an additional commutativity  assumption.

Let $\qG=(C_{0}(\qG),\Delta)$ be a discrete $C^{*}$-quantum group with counit $\varepsilon$ and compact dual $\dqG$.
\begin{definition}
  Let $C$ be a $C^{*}$-algebra. We call a projection $p \in M(C \otimes C_{0}(\qG))$ \emph{admissible} if in $M(C \otimes C_{0}(\qG) \otimes C_{0}(\qG))$,
  \begin{align} \label{eq:admissible}
  (p \otimes 1) \cdot (\id \otimes \Delta)(p) = (\id \otimes \Delta)(p) \cdot (p\otimes 1).
  \end{align}
   \end{definition}
  \begin{remark}
 For every partial coaction $\delta$ of $C_{0}(\qG)$ on a $C^{*}$-algebra $C$, the projection $\delta(1_{C}) \in M(C \otimes C_{0}(\qG))$ is admissible.
 \end{remark}
 \begin{proposition} \label{proposition:admissible-universal}
  Let $\qG$ be a discrete $C^{*}$-quantum group. Then there exists a unital $C^{*}$-algebra $C(\qB_{\qG})$ with an admissible projection $p \in M(C(\qB_{\qG}) \otimes C_{0}(\qG))$ that is universal in the following sense: for every  $C^{*}$-algebra $C$ with an admissible projection $q \in M(C(\qB_{\qG}) \otimes C_{0}(\qG))$, there exists a unique unital $*$-homomorphism $\pi \colon C(\qB_{\qG}) \to M(C)$ such that $q=(\pi \otimes \id)(p)$.
\end{proposition}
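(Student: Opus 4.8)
The plan is to realise $C(\qB_{\qG})$ as a universal unital $C^{*}$-algebra defined by generators and relations, taking advantage of the fact that $C_{0}(\qG)$ is a $c_{0}$-direct sum of matrix algebras. Write $C_{0}(\qG) = \bigoplus^{c_{0}}_{\alpha \in \Irr(\dqG)} B_{\alpha}$ with $B_{\alpha} \cong M_{n_{\alpha}}(\C)$, and fix matrix units $(e^{\alpha}_{ij})_{i,j}$ in each block $B_{\alpha}$. For any $C^{*}$-algebra $C$ one has $M(C \otimes C_{0}(\qG)) = \prod_{\alpha}^{\ell^{\infty}} (M(C) \otimes B_{\alpha})$, so a projection $p \in M(C \otimes C_{0}(\qG))$ is the same datum as a norm-bounded family of matrices $(p^{\alpha}_{ij})_{i,j}$ over $M(C)$, one for each $\alpha$, satisfying $(p^{\alpha}_{ij})^{*} = p^{\alpha}_{ji}$ and $\sum_{k} p^{\alpha}_{ik} p^{\alpha}_{kj} = p^{\alpha}_{ij}$. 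The first step is to rewrite the admissibility condition \eqref{eq:admissible} in these coordinates. Decomposing the three-fold multiplier algebra into blocks indexed by pairs $(\beta,\gamma)$ and using that, for a discrete quantum group, $\Delta$ sends each block $B_{\alpha}$ into the multiplier algebra with only finitely many nonzero $(\beta,\gamma)$-components, one sees that each $(\beta,\gamma)$-block of $(\id \otimes \Delta)(p)$ is a matrix over $M(C)$ whose entries are fixed $\C$-linear combinations of the $p^{\alpha}_{ij}$, the coefficients being the intertwiner data of $\dqG$. Hence the $(\beta,\gamma)$-block of \eqref{eq:admissible} is a finite system of polynomial $*$-relations among the $p^{\alpha}_{ij}$, and over all pairs $(\beta,\gamma)$ this yields a countable family of relations.

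Next I would define $C(\qB_{\qG})$ to be the universal unital $C^{*}$-algebra on generators $u^{\alpha}_{ij}$, with $\alpha \in \Irr(\dqG)$ and $1 \le i,j \le n_{\alpha}$, subject to the relations (R1) $(u^{\alpha}_{ij})^{*} = u^{\alpha}_{ji}$, (R2) $\sum_{k} u^{\alpha}_{ik} u^{\alpha}_{kj} = u^{\alpha}_{ij}$, and (R3) the admissibility relations of the previous paragraph with $u^{\alpha}_{ij}$ in place of $p^{\alpha}_{ij}$. Relations (R1) and (R2) force each matrix $(u^{\alpha}_{ij})_{i,j}$ to be a projection in every representation, so that $\|u^{\alpha}_{ij}\| \le 1$ throughout; the relations are therefore admissible and the universal $C^{*}$-algebra exists by the standard theory of generators and relations (see, for example, \cite{blackadar:opalgs}). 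Setting $p_{\alpha} := \sum_{i,j} u^{\alpha}_{ij} \otimes e^{\alpha}_{ij}$ gives a norm-bounded family of projections, hence a projection $p := (p_{\alpha})_{\alpha} \in M(C(\qB_{\qG}) \otimes C_{0}(\qG))$, which is admissible by (R3). This is the same object that Sołtan's universal quantum family of maps from $\qG$ to $\{0,1\}$ \cite{soltan:quantum-families} provides once the extra relation \eqref{eq:admissible} is imposed.

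For the universal property, let $q \in M(C \otimes C_{0}(\qG))$ be an admissible projection over an arbitrary $C^{*}$-algebra $C$, with matrix entries $q^{\alpha}_{ij} \in M(C)$. By the first paragraph these entries satisfy (R1)--(R3), so the assignment $u^{\alpha}_{ij} \mapsto q^{\alpha}_{ij}$ extends, by the universal property of $C(\qB_{\qG})$, to a unital $*$-homomorphism $\pi \colon C(\qB_{\qG}) \to M(C)$; by construction $(\pi \otimes \id)(p) = q$, and $\pi$ is unique because the $u^{\alpha}_{ij}$ generate $C(\qB_{\qG})$.

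I expect the main obstacle to be the precise equivalence in the first paragraph between the coordinate-free identity \eqref{eq:admissible} and the block relations (R3): one must check that the commutator in \eqref{eq:admissible} vanishes if and only if all of its matrix entries vanish, and that these entries are genuinely polynomials in the $u^{\alpha}_{ij}$ with scalar coefficients. This is exactly where the discreteness of $\qG$ is used, through the matrix-block decomposition of $C_{0}(\qG)$ and the finite support of $\Delta$ on each block; the remaining point, that the relations are admissible so that the universal seminorm is finite, is immediate from the projection relations (R1) and (R2).
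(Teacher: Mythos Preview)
Your proposal is correct and follows essentially the same construction as the paper: define $C(\qB_{\qG})$ as the universal unital $C^{*}$-algebra on generators $u^{\alpha}_{ij}$ subject to the blockwise projection relations (your (R1)--(R2), the paper's condition (1)) and the blockwise commutation relations coming from admissibility (your (R3), the paper's condition (2)), and use discreteness of $\qG$ to see that these are finitely many polynomial relations in each block and that $p=\sum_{\alpha} p_{\alpha}$ converges strictly. Your treatment is more explicit about the identification $M(C\otimes C_{0}(\qG))\cong\prod_{\alpha}^{\ell^{\infty}}(M(C)\otimes B_{\alpha})$ and about why the relations are admissible, but the argument is the same.
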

\begin{proof}
    Write $C_{0}(\qG) \cong \bigoplus_{\alpha} I_{\alpha}$, where $\alpha$ varies in $\Irr(\dqG)$ and each $I_{\alpha}$ is a matrix algebra. Choose matrix units $(e^{\alpha}_{ij})_{i,j}$ for each $I_{\alpha}$. Denote by $C(\qB_{\qG})$ the universal unital $C^{*}$-algebra with generators $1$ and  $(p^{\alpha}_{ij})_{\alpha,i,j}$ satisfying the following relations:
    \begin{enumerate}
    \item  the finite sum $p^{\alpha} := \sum_{i,j} p^{\alpha}_{ij} \otimes e^{\alpha}_{ij} $ is a projection for every $\alpha \in \Irr(\dqG)$;
    \item $(p^{\alpha} \otimes 1)(\id \otimes \Delta)(p^{\beta}) = (\id \otimes \Delta)(p^{\beta})(p^{\alpha} \otimes 1)$ for all $\alpha,\beta \in \Irr(\dqG)$.
     \end{enumerate}
     Then the sum $p = \sum_{\alpha} p^{\alpha} \in M(C(\qB_{\qG}) \otimes C_{0}(\qG))$ converges strictly because each summand $p^{\alpha}$ lies in a different summand of $C(\qB_{\qG}) \otimes C_{0}(\qG) \cong \bigoplus_{\alpha} (C(\qB_{\qG}) \otimes I_{\alpha})$ and has norm at most 1. By (1) and (2), this $p$ is an admissible projection, and by construction, $C(\qB_{\qG})$ has the desired universal property by construction.
\end{proof}
We denote by $C_{0}(\qB_{\qG}^{\times}) \subset C(\qB_{\qG})$ the non-unital $C^{*}$-subalgebra generated by all $p^{\alpha}_{i,j}$. 
\begin{example} \label{example:bernoulli-classical}
If  $\qG$ is a classical discrete group $\Gamma$, we can identify $C(\qB_{\Gamma})$ with $C(\{0,1\}^{\Gamma})$. Indeed, in that case, $\Irr(\hat{{\Gamma}})$ can be identified with $\Gamma$ so that $C(\qB_{\Gamma})$ is  generated by $1$ and a family of projections $p^{\gamma}$, where $\gamma \in \Gamma$. Denote by $\delta_{\gamma} \in C_{0}(\Gamma)$  the Dirac delta function at $\gamma\in \Gamma$. Then  $p=\sum_{\gamma} p^{\gamma} \otimes \delta_{\gamma}$ and
the admissibility condition takes the form
\begin{align*}
 \left[\sum_{\gamma}p^{\gamma} \otimes \delta_{\gamma} \otimes 1, \sum_{\gamma,\gamma'} p^{\gamma\gamma'} \otimes \delta_{\gamma} \otimes \delta_{\gamma'}\right] = 0 
\end{align*}
 or, equivalently, $[p^{\gamma},p^{\gamma''}]=0$ for all $\gamma,\gamma'' \in \Gamma$. Thus, $C(\qB_{\Gamma})$ is commutative. Therefore, the map that sends $p^{\gamma}$ to the projection of $\{0,1\}^{\Gamma}$ onto the $\gamma$th component induces an isomorphism $C(\qB_{\Gamma}) \cong C(\{0,1\}^{\Gamma})$. Under this isomorphism, the $C^{*}$-subalgebra $C_{0}(\qB_{\qG}^{\times})$ corresponds to $C_{0}(\mathcal{P}(\Gamma) \setminus \{\emptyset\})$.
\end{example}
 
The quantum space  $\qB_{\qG}$ comes with a natural action of $\qG$:
\begin{proposition} \label{proposition:bernoulli-coaction}
  There exists a unique coaction $\delta$ of $C_{0}(\qG)$ on $C(\qB_{\qG})$ such that
  \begin{align} \label{eq:bernoulli-coaction}
    (\delta\otimes \id)(p) = (\id \otimes \Delta)(p).
  \end{align}
   This coaction is counital and restricts to a coaction on $C_{0}(\qB_{\qG}^{\times})$. 
 \end{proposition}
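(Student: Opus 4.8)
The plan is to read off $\delta$ from the universal property established in Proposition~\ref{proposition:admissible-universal}, and then to verify the coaction axioms on the generators $p^{\alpha}_{ij}$ of $C(\qB_{\qG})$ by slicing the defining relation~\eqref{eq:bernoulli-coaction}. For existence I would set $C := C(\qB_{\qG}) \otimes C_{0}(\qG)$ and consider $q := (\id \otimes \Delta)(p) \in M(C \otimes C_{0}(\qG))$, a projection because $\id \otimes \Delta$ is a $*$-homomorphism. The key point is that $q$ is again admissible. To see this, I would write the admissibility relation~\eqref{eq:admissible} for $p$ in legs $1,2,3$ (leg $1$ in $C(\qB_{\qG})$, legs $2,3$ in $C_{0}(\qG)$) and apply the $*$-homomorphism $\id \otimes \Delta \otimes \id$, which splits leg $2$. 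Using coassociativity in the form $(\id \otimes \Delta)\Delta = (\Delta \otimes \id)\Delta =: \Delta^{(2)}$, the image of $p \otimes 1$ becomes $q \otimes 1$ while the image of $(\id \otimes \Delta)(p)$ becomes $(\id \otimes \Delta^{(2)})(p) = (\id_{C} \otimes \Delta)(q)$; hence~\eqref{eq:admissible} for $p$ is carried exactly to~\eqref{eq:admissible} for $q$. Proposition~\ref{proposition:admissible-universal} then yields a unique unital $*$-homomorphism $\delta \colon C(\qB_{\qG}) \to M(C)$ with $(\delta \otimes \id)(p) = q$, which is~\eqref{eq:bernoulli-coaction}; uniqueness of $\delta$ as a unital homomorphism, hence \emph{a fortiori} as a coaction, is part of that universal property.

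Next I would verify the coaction axioms using the principle that a unital $*$-homomorphism out of $C(\qB_{\qG})$ is determined by the value of $(\,\cdot\,\otimes \id)$ on $p$, since the $p^{\alpha}_{ij}$ generate and the matrix units $e^{\alpha}_{ij}$ are linearly independent. For coassociativity it then suffices to check $((\delta \otimes \id)\delta \otimes \id)(p) = ((\id \otimes \Delta)\delta \otimes \id)(p)$. Applying~\eqref{eq:bernoulli-coaction} once and then either $\delta$ or $\Delta$ to the appropriate $C_{0}(\qG)$-leg, and invoking coassociativity of $\Delta$, both sides reduce to $(\id \otimes \Delta^{(2)})(p)$, so $(\delta \otimes \id)\delta = (\id \otimes \Delta)\delta$. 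Counitality follows in the same way: applying $\id \otimes \varepsilon \otimes \id$ to~\eqref{eq:bernoulli-coaction} and using $(\varepsilon \otimes \id)\Delta = \id$ gives $((\id \otimes \varepsilon)\delta \otimes \id)(p) = p$, whence $(\id \otimes \varepsilon)\delta = \id$. In particular $\delta$ is unital, hence strict, and together with coassociativity this makes it a genuine (non-partial) counital coaction.

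For the restriction to $C_{0}(\qB_{\qG}^{\times})$ I would slice the original $C_{0}(\qG)$-leg of~\eqref{eq:bernoulli-coaction} by the functional $\widehat{e}^{\alpha}_{ij}$ dual to $e^{\alpha}_{ij}$, obtaining $\delta(p^{\alpha}_{ij}) = \sum_{\beta,k,l} p^{\beta}_{kl} \otimes (\id \otimes \widehat{e}^{\alpha}_{ij})\Delta(e^{\beta}_{kl})$. Since $\qG$ is discrete, the elements $(\id \otimes \widehat{e}^{\alpha}_{ij})\Delta(e^{\beta}_{kl})$ lie in $C_{0}(\qG)$ and vanish for all but finitely many $\beta$ by finiteness of fusion, while the $p^{\beta}_{kl}$ lie in $C_{0}(\qB_{\qG}^{\times})$; hence $\delta(p^{\alpha}_{ij}) \in C_{0}(\qB_{\qG}^{\times}) \otimes C_{0}(\qG)$. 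As $C_{0}(\qB_{\qG}^{\times})$ is the ideal generated by the $p^{\alpha}_{ij}$, it follows that $\delta$ maps it into $M(C_{0}(\qB_{\qG}^{\times}) \otimes C_{0}(\qG))$, and the coassociativity already proved restricts, giving the asserted coaction of $C_{0}(\qG)$ on $C_{0}(\qB_{\qG}^{\times})$.

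I expect the main obstacle to be the careful leg-bookkeeping in the admissibility and coassociativity steps: one must apply exactly the right tensor-leg homomorphism to~\eqref{eq:admissible} respectively~\eqref{eq:bernoulli-coaction} and then recognize, via coassociativity of $\Delta$, that both sides collapse to an identity involving $\Delta^{(2)}$. The only other point needing genuine care is the finiteness claim in the last step, which is what guarantees that $\delta(p^{\alpha}_{ij})$ lands in the honest tensor product rather than merely in the multiplier algebra.
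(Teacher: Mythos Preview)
Your proposal is correct and follows essentially the same route as the paper: obtain $\delta$ from the universal property applied to $q=(\id\otimes\Delta)(p)$ after checking admissibility via $\id\otimes\Delta\otimes\id$ and coassociativity of $\Delta$, then verify coassociativity and counitality of $\delta$ by testing against $p$, and finally read off the restriction to $C_{0}(\qB_{\qG}^{\times})$ from~\eqref{eq:bernoulli-coaction}. The only difference is cosmetic: the paper phrases the admissibility check as ``$(\id\otimes\id\otimes\Delta)(q)$ commutes with $q\otimes 1$'' and handles the restriction step more tersely via $\delta(p^{\alpha}_{ij})(1\otimes C_{0}(\qG))\subseteq C_{0}(\qB^{\times}_{\qG})\otimes C_{0}(\qG)$, whereas you spell out the finite-fusion argument explicitly.
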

\begin{proof}
  The projection $q:= (\id \otimes \Delta)(p) \in M((C(\qB_{\qG}) \otimes C_{0}(\qG)) \otimes C_{0}(\qG))$ is admissible because
  \begin{align*}
    (\id \otimes \id \otimes \Delta)(q) = (\id \otimes \Delta^{(2)})(p) =  (\id \otimes \Delta \otimes \id)(\id \otimes \Delta)(p)
  \end{align*}
commutes with $q\otimes 1 = (\id \otimes \Delta \otimes \id)(p \otimes 1)$. The universal property of $p$ yields a unital $*$-homomorphism $\delta \colon C(\qB_{\qG}) \to M(C(\qB_{\qG}) \otimes C_{0}(\qG))$ such that $(\delta \otimes \id)(p)=q=(\id \otimes\Delta)(p)$. We have  $(\delta \otimes \id)\delta = (\id \otimes \Delta)\delta$ because by definition of $\delta$,
\begin{align*}
  ((\delta \otimes \id)\delta \otimes \id)(p) &=  (\delta \otimes \id \otimes \id)(\id \otimes \Delta)(p)   \\
&= (\id \otimes \id \otimes \Delta)(\delta \otimes \id)(p)  \\
&= (\id \otimes \Delta \otimes \id)(\id \otimes  \Delta)(p) = ((\id \otimes \Delta)\delta \otimes \id)(p).  
\end{align*}

Next,  $((\id \otimes \varepsilon) \delta \otimes \id)(p) = (\id \otimes (\varepsilon \otimes \id)\Delta)(p) = p $ and hence $(\id \otimes \varepsilon)\delta = \id$.

Finally, \eqref{eq:bernoulli-coaction} implies that $\delta(p^{\alpha}_{ij})(1 \otimes C_{0}(\qG))  \subseteq  C_{0}(\qB^{\times}_{\qG}) \otimes C_{0}(\qG)$.
\end{proof}
We shall  restrict the coaction $\delta$ to the direct summand of $C(\qB_{\qG})$ that is given by the following projection:
\begin{lemma} \label{lemma:bernoulli-counit}
  The projection $p_{\varepsilon}:=(\id \otimes \varepsilon)(p) \in C(\qB_{\qG})$ is central and $\delta(p_{\varepsilon}) = p$.
\end{lemma}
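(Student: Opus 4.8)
The plan is to derive both assertions by slicing the two defining relations for $p$ --- the coaction identity \eqref{eq:bernoulli-coaction} and the admissibility identity \eqref{eq:admissible} --- against the counit $\varepsilon$. The crucial observation is that, since $\varepsilon$ is a character on $C_{0}(\qG)$, each of the partially applied maps $\id \otimes \id \otimes \varepsilon$ and $\id \otimes \varepsilon \otimes \id$ is a (strict, nondegenerate) $*$-homomorphism on the relevant multiplier algebras, so that it may be distributed over products term by term. Combined with the counit relations $(\varepsilon \otimes \id)\Delta = \id = (\id \otimes \varepsilon)\Delta$, this reduces everything to two short manipulations.

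For the identity $\delta(p_{\varepsilon}) = p$, I would apply $\id \otimes \id \otimes \varepsilon$ to both sides of \eqref{eq:bernoulli-coaction}. On the left, $(\id \otimes \id \otimes \varepsilon)\circ(\delta \otimes \id) = \delta \circ (\id \otimes \varepsilon)$, so the left-hand side becomes $\delta\big((\id \otimes \varepsilon)(p)\big) = \delta(p_{\varepsilon})$. On the right, $(\id \otimes \id \otimes \varepsilon)\circ(\id \otimes \Delta) = \id \otimes \big((\id \otimes \varepsilon)\Delta\big) = \id \otimes \id$ by the counit property, so the right-hand side becomes $p$. This gives $\delta(p_{\varepsilon}) = p$ at once.

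For centrality, I would apply the $*$-homomorphism $\id \otimes \varepsilon \otimes \id$ to the admissibility relation \eqref{eq:admissible}. Using $(\id \otimes \varepsilon \otimes \id)(p \otimes 1) = p_{\varepsilon} \otimes 1$ and $(\id \otimes \varepsilon \otimes \id)\big((\id \otimes \Delta)(p)\big) = \big(\id \otimes (\varepsilon \otimes \id)\Delta\big)(p) = p$, the left-hand side becomes $(p_{\varepsilon} \otimes 1)\,p$ and the right-hand side becomes $p\,(p_{\varepsilon} \otimes 1)$, so that $p_{\varepsilon} \otimes 1$ commutes with $p$ in $M(C(\qB_{\qG}) \otimes C_{0}(\qG))$. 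Finally I would expand $p = \sum_{\alpha,i,j} p^{\alpha}_{ij} \otimes e^{\alpha}_{ij}$ and read off, using the linear independence of the matrix units $e^{\alpha}_{ij}$ (equivalently, by applying the slices dual to them), that $p_{\varepsilon}$ commutes with every generator $p^{\alpha}_{ij}$. Since these generators together with $1$ generate $C(\qB_{\qG})$, the element $p_{\varepsilon}$ is central; and because $\varepsilon$ is supported on the one-dimensional summand of the trivial corepresentation, $p_{\varepsilon}$ in fact lies in $C(\qB_{\qG})$ and not merely in its multiplier algebra.

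The computations are routine, and the one point requiring care --- the only genuine obstacle --- is the justification that the partially applied counit maps are honest $*$-homomorphisms on the relevant multiplier algebras, so that they commute with the products appearing in \eqref{eq:admissible}, and that the slice manipulations are valid at the level of multipliers rather than of the algebraic tensor product. This is guaranteed by nondegeneracy of $\varepsilon$ together with the strictness of its extensions, and if desired may be verified summand by summand, since each $p^{\alpha}$ lives in $C(\qB_{\qG}) \otimes I_{\alpha}$, which is finite-dimensional in the $C_{0}(\qG)$-leg. Once this is secured, pairing the admissibility relation with the counit is precisely what yields the commutation $(p_{\varepsilon} \otimes 1)p = p(p_{\varepsilon} \otimes 1)$, which is the heart of the statement.
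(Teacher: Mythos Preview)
Your proof is correct and follows essentially the same route as the paper: apply $\id \otimes \varepsilon \otimes \id$ to the admissibility relation \eqref{eq:admissible} to obtain $(p_{\varepsilon}\otimes 1)p = p(p_{\varepsilon}\otimes 1)$ and then slice to see that $p_{\varepsilon}$ commutes with the generators, and apply $\id \otimes \id \otimes \varepsilon$ to \eqref{eq:bernoulli-coaction} for the identity $\delta(p_{\varepsilon})=p$. The only cosmetic difference is that the paper slices with arbitrary $\omega \in C_{0}(\qG)^{*}$ rather than expanding in matrix units, which amounts to the same thing.
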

\begin{proof}
  We apply $\id \otimes \varepsilon \otimes \id$ to \eqref{eq:admissible} and obtain $(p_{\varepsilon} \otimes 1)p=p(p_{\varepsilon} \otimes 1)$. Thus, $p_{\varepsilon}$ commutes with $(\id \otimes \omega)(p) \in C(\qB_{\qG})$ for every $\omega \in C_{0}(\qG)^{*}$ and hence with $C(\qB_{\qG})$.  Moreover, 
  \begin{align*}
    \delta(p_{\varepsilon}) = (\delta \otimes \varepsilon)(p) &= (\id \otimes \id \otimes \varepsilon)(\delta\otimes \id)(p) = (\id \otimes \id \otimes \varepsilon)(\id \otimes \Delta)(p) =p. \qedhere
  \end{align*}
\end{proof}
\begin{example}
  If $\qG$ is a classical group $\Gamma$ (see Example \ref{example:bernoulli-classical}), $\varepsilon$ is evaluation at the unit $e_{\Gamma}$ and $p_{\varepsilon} =p^{e_{\Gamma}}$. Therefore,   restriction of the coaction $\delta$ above to the direct summand $p_{\varepsilon}C(\qB_{\qG})$ of $C(\qB_{\qG})$ corresponds to
restriction of the Bernoulli shift on $\mathcal{P}(\Gamma)$ to the subsets containing the unit $e_{\Gamma}$.  
\end{example}
We can now define the quantum analogue of the partial Bernoulli shift:
\begin{definition}
Let $\qG$ be a discrete $C^{*}$-quantum group and  write $C(\qB_{\qG}^{\varepsilon})$  for the direct summand   $p_{\varepsilon}C(\qB_{\qG})$ of $C(\qB_{\qG})$. Then
  the \emph{partial Bernoulli action} of $\qG$ is the partial coaction $\delta^{\varepsilon}$ of $C_{0}(\qG)$ on $C(\qB_{\qG}^{\varepsilon})$ obtained as the restriction of the coaction $\delta$ as in Proposition \ref{proposition:restriction}, that is,
\begin{align*}
  \delta^{\varepsilon}(b) =  \delta(b)(p_{\varepsilon} \otimes 1) \quad \text{for all  } b \in C(\qB_{\qG}^{\varepsilon}).
\end{align*}
\end{definition}
Proposition \ref{proposition:admissible-universal} immediately implies:
\begin{corollary} \label{corollary:admissible-universal}
    Let $C$ be a $C^{*}$-algebra and let $q \in M(C \otimes C_{0}(\qG))$ be  an admissible projection such that $(\id \otimes \varepsilon)(q)=1_{C} \in M(C)$. Then there exists a unique unital $*$-homomorphism $\pi \colon C(\qB_{\qG}^{\varepsilon}) \to M(C)$ such that $q=(\pi \otimes \id)(p)$.
\end{corollary}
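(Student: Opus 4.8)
The plan is to deduce the statement directly from the universal property of Proposition \ref{proposition:admissible-universal}, using the counit slice studied in Lemma \ref{lemma:bernoulli-counit} to pass to the direct summand. First I would start from the full Bernoulli algebra: given a $C^{*}$-algebra $C$ and an admissible projection $q \in M(C \otimes C_{0}(\qG))$, Proposition \ref{proposition:admissible-universal} yields a unique unital $*$-homomorphism $\tilde\pi \colon C(\qB_{\qG}) \to M(C)$ with $(\tilde\pi \otimes \id)(p) = q$. At this stage the extra hypothesis $(\id \otimes \varepsilon)(q) = 1_{C}$ has not been used; it is precisely what will cut $\tilde\pi$ down to $C(\qB_{\qG}^{\varepsilon})$.

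Second, I would apply the counit slice $\id_{C} \otimes \varepsilon$ to the identity $q = (\tilde\pi \otimes \id)(p)$. Since $\id_{C} \otimes \varepsilon$ intertwines $\tilde\pi \otimes \id$ with $\tilde\pi$, and since $p_{\varepsilon} = (\id \otimes \varepsilon)(p)$ by definition, this gives
\[
  1_{C} = (\id_{C} \otimes \varepsilon)(q) = \tilde\pi\bigl((\id \otimes \varepsilon)(p)\bigr) = \tilde\pi(p_{\varepsilon}).
\]
By Lemma \ref{lemma:bernoulli-counit} the projection $p_{\varepsilon}$ is central, so $\tilde\pi(1 - p_{\varepsilon}) = 1_{C} - \tilde\pi(p_{\varepsilon}) = 0$. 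Hence $\tilde\pi$ annihilates the complementary summand $(1 - p_{\varepsilon})C(\qB_{\qG})$ and is determined by its restriction $\pi := \tilde\pi|_{C(\qB_{\qG}^{\varepsilon})}$, a unital $*$-homomorphism on $C(\qB_{\qG}^{\varepsilon}) = p_{\varepsilon}C(\qB_{\qG})$ (unital because $p_{\varepsilon}$ is the identity of this summand and $\tilde\pi(p_{\varepsilon}) = 1_{C}$). Conversely, any unital $\pi$ on the summand extends back via $\tilde\pi(x) := \pi(p_{\varepsilon}x)$, so that $\tilde\pi \mapsto \tilde\pi|_{C(\qB_{\qG}^{\varepsilon})}$ is a bijection between unital $*$-homomorphisms $C(\qB_{\qG}) \to M(C)$ sending $p_{\varepsilon}$ to $1_{C}$ and unital $*$-homomorphisms $C(\qB_{\qG}^{\varepsilon}) \to M(C)$.

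Third, I would transport existence and uniqueness across this bijection. Under $\tilde\pi(x) = \pi(p_{\varepsilon}x)$ one has $(\tilde\pi \otimes \id)(p) = (\pi \otimes \id)\bigl((p_{\varepsilon} \otimes 1)p\bigr)$, which is exactly what $(\pi \otimes \id)(p)$ means once $p$ is read as its image in the corner $M(C(\qB_{\qG}^{\varepsilon}) \otimes C_{0}(\qG))$; thus $q = (\pi \otimes \id)(p)$. For uniqueness, any unital $\pi$ with this property extends to a unital $\tilde\pi$ sending $p_{\varepsilon}$ to $1_{C}$ and reproducing $q$, and such $\tilde\pi$ is unique by Proposition \ref{proposition:admissible-universal}, forcing $\pi$ to be unique as well.

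The only genuinely delicate point is bookkeeping rather than mathematics: one must be explicit that the symbol $(\pi \otimes \id)(p)$ refers to $p$ compressed by the central projection $p_{\varepsilon} \otimes 1$ into $M(C(\qB_{\qG}^{\varepsilon}) \otimes C_{0}(\qG))$, so that it agrees with the full expression $(\tilde\pi \otimes \id)(p)$ supplied by the universal property. Everything else is a routine passage between a unital $C^{*}$-algebra and a central direct summand cut out by $p_{\varepsilon}$.
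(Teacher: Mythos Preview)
Your proposal is correct and is precisely the argument the paper has in mind: the corollary is stated there without proof, simply as an immediate consequence of Proposition~\ref{proposition:admissible-universal}, and your unpacking---apply the universal property to get $\tilde\pi$, slice with $\varepsilon$ to force $\tilde\pi(p_{\varepsilon})=1_{C}$, then restrict to the summand $p_{\varepsilon}C(\qB_{\qG})$---is the only natural way to fill this in. Your care in the final paragraph about reading $(\pi\otimes\id)(p)$ as the compression $(p_{\varepsilon}\otimes 1)p$ is well placed; the paper itself is slightly inconsistent on this point (compare the statement of Corollary~\ref{corollary:admissible-universal} with equation~\eqref{eq:bernoulli-initial}).
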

The partial Bernoulli action is initial in the following sense:
\begin{proposition} \label{proposition:bernoulli-initial}
  Let $\delta_{C}$ be a counital partial coaction of $C_{0}(\qG)$ on a $C^{*}$-algebra $C$. Then there exists a unique unital $*$-homomorphism $\pi \colon C(\qB_{\qG}^{\varepsilon}) \to M(C)$ such that
  \begin{align}     \label{eq:bernoulli-initial}
  (\pi \otimes \id)(p(p_{\epsilon} \otimes 1)) = \delta_{C}(1_{C}),
\end{align}
 and this $\pi$ is a strong morphism of partial coactions, that is, $(\pi \otimes \id)\circ \delta^{\varepsilon} = \delta_{C}\circ \pi$.
\end{proposition}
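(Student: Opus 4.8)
The plan is to construct $\pi$ from the universal property of $C(\qB_{\qG}^{\varepsilon})$ and then to verify the intertwining relation by testing both homomorphisms on the universal projection. For the first part I would set $q:=\delta_{C}(1_{C})\in M(C\otimes C_{0}(\qG))$, which is a projection because $\delta_{C}$ is a strict $*$-homomorphism. Since $\delta_{C}$ is counital we have $(\id\otimes\varepsilon)(q)=1_{C}$, and as noted in the remark following the definition of admissibility, $q=\delta_{C}(1_{C})$ is admissible, i.e.\ it satisfies \eqref{eq:admissible}. Corollary \ref{corollary:admissible-universal} then produces a unique unital $*$-homomorphism $\pi\colon C(\qB_{\qG}^{\varepsilon})\to M(C)$ with $(\pi\otimes\id)(p(p_{\varepsilon}\otimes 1))=q=\delta_{C}(1_{C})$, which is precisely \eqref{eq:bernoulli-initial}; uniqueness is part of the corollary. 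This disposes of the existence and uniqueness claims.

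For the strong morphism property I would show that the two strict $*$-homomorphisms $f:=(\pi\otimes\id)\circ\delta^{\varepsilon}$ and $g:=\delta_{C}\circ\pi$ from $C(\qB_{\qG}^{\varepsilon})$ to $M(C\otimes C_{0}(\qG))$ coincide. Writing $P:=p(p_{\varepsilon}\otimes 1)=\delta^{\varepsilon}(p_{\varepsilon})\in M(C(\qB_{\qG}^{\varepsilon})\otimes C_{0}(\qG))$ and using that the slices $(\id\otimes\omega)(P)$ generate $C(\qB_{\qG}^{\varepsilon})$ (they give exactly $p_{\varepsilon}$ and the generators $p_{\varepsilon}p^{\alpha}_{ij}$), it suffices to prove $(f\otimes\id)(P)=(g\otimes\id)(P)$ in $M(C\otimes C_{0}(\qG)\otimes C_{0}(\qG))$. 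On the $g$-side, $(\pi\otimes\id)(P)=\delta_{C}(1_{C})=:r$ by \eqref{eq:bernoulli-initial}, so $(g\otimes\id)(P)=(\delta_{C}\otimes\id)(r)$, and partial coassociativity \eqref{eq:partial-coaction} applied to $1_{C}$ gives $(g\otimes\id)(P)=(r\otimes 1)(\id\otimes\Delta)(r)$.

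On the $f$-side I would compute $(\delta^{\varepsilon}\otimes\id)(P)$ using $\delta^{\varepsilon}(\cdot)=\delta(\cdot)(p_{\varepsilon}\otimes 1)$, the defining relation \eqref{eq:bernoulli-coaction}, and $\delta(p_{\varepsilon})=p$ from Lemma \ref{lemma:bernoulli-counit}. This first yields $(\delta^{\varepsilon}\otimes\id)(P)=(\id\otimes\Delta)(p)(P\otimes 1)$, and then, exploiting that $p_{\varepsilon}$ is central and $P=(p_{\varepsilon}\otimes 1)p$, that $(\id\otimes\Delta)(p)(P\otimes 1)=(\id\otimes\Delta)(P)(P\otimes 1)$; the point of this rewriting is that both factors now lie in $M(C(\qB_{\qG}^{\varepsilon})\otimes C_{0}(\qG)\otimes C_{0}(\qG))$, the corner on which $\pi$ is defined. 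Applying $\pi\otimes\id\otimes\id$ and $(\pi\otimes\id)(P)=r$ then gives $(f\otimes\id)(P)=(\id\otimes\Delta)(r)(r\otimes 1)$. Comparing the two sides, the equality $(f\otimes\id)(P)=(g\otimes\id)(P)$ is exactly the commutation relation $(r\otimes 1)(\id\otimes\Delta)(r)=(\id\otimes\Delta)(r)(r\otimes 1)$, that is, the admissibility \eqref{eq:admissible} of $r=\delta_{C}(1_{C})$, and $f=g$ follows by continuity since the slices of $P$ generate $C(\qB_{\qG}^{\varepsilon})$.

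I expect the main obstacle to be the computation of $(\delta^{\varepsilon}\otimes\id)(P)$: one has to keep careful track of the three legs and repeatedly insert the central projection $p_{\varepsilon}$ so that every factor descends to the direct summand $C(\qB_{\qG}^{\varepsilon})$ before $\pi$ can be applied. The conceptually crucial observation is that the two computations leave the operator-order discrepancy $(\id\otimes\Delta)(r)(r\otimes 1)$ versus $(r\otimes 1)(\id\otimes\Delta)(r)$, which is \emph{not} resolved by any automatic commutativity but precisely by the admissibility of $\delta_{C}(1_{C})$; recognizing this is what makes the whole argument close.
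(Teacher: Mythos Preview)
Your proposal is correct and follows essentially the same route as the paper: both obtain $\pi$ from Corollary~\ref{corollary:admissible-universal}, compute $(\delta^{\varepsilon}\otimes\id)(P)$ using \eqref{eq:bernoulli-coaction} and Lemma~\ref{lemma:bernoulli-counit}, apply $\pi\otimes\id\otimes\id$, and compare with $(\delta_{C}\otimes\id)\delta_{C}(1_{C})$ via partial coassociativity. You are in fact slightly more explicit than the paper in isolating the final step as an instance of admissibility of $\delta_{C}(1_{C})$; the paper's passage from $(\id\otimes\Delta)(\delta_{C}(1_{C}))\cdot(\delta_{C}(1_{C})\otimes 1)$ to $(\delta_{C}\otimes\id)\delta_{C}(1_{C})$ uses exactly this commutation without naming it.
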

\begin{proof}
The projection $\delta_{C}(1_{C}) \in M(C\otimes C_{0}(\qG))$ is admissible and $\delta_{C}$ is counital. Hence, Corollary \ref{corollary:admissible-universal} yields a unique unital  $*$-homomorphism $\pi \colon C(\qB_{\qG}^{\varepsilon}) \to M(C)$ such that $(\pi \otimes \id)(p) = \delta_{C}(1_{C})$. 
 We show that $(\pi \otimes \id)\circ \delta^{\varepsilon} = \delta_{C} \circ \pi$. First,  \eqref{eq:bernoulli-coaction} and Lemma  \ref{lemma:bernoulli-counit} imply
 \begin{align*}
   (\delta^{\varepsilon} \otimes \id)(p(p_{\varepsilon} \otimes 1)) &= (\delta \otimes \id)(p(p_{\epsilon} \otimes 1)) \cdot (p_{\varepsilon} \otimes 1)  \\
&=(\id \otimes \Delta)(p) \cdot (p \otimes 1) \cdot (p_{\varepsilon} \otimes 1 \otimes 1).
 \end{align*}
We apply $\pi \otimes \id \otimes \id$, use \eqref{eq:bernoulli-initial}, and find that
  \begin{align*}
    ((\pi  \otimes \id)\delta^{\varepsilon} \otimes \id)(p(p_{\varepsilon} \otimes 1)) &= 
(\pi \otimes \Delta)(p(p_{\varepsilon} \otimes 1)) \cdot (\pi \otimes \id)(p(p_{\varepsilon} \otimes 1)) \\
&= (\id \otimes \Delta)(\delta_{C}(1_{C})) \cdot (\delta_{C}(1_{C}) \otimes 1) \\
&= (\delta_{C} \otimes \id)\delta_{C}(1_{C}) \\
&= (\delta_{C} \circ \pi  \otimes \id)(p(p_{\varepsilon} \otimes 1)).
  \end{align*}
But this relation implies $(\pi \otimes \id)\circ \delta^{\varepsilon} = \delta_{C} \circ \pi$.
\end{proof}
This partial Bernoulli shift will be studied further in a forthcoming article. In particular, the partial coaction $\delta^{\varepsilon}$ should give rise to a partial crossed product that can be regarded as a quantum counterpart to the partial group algebra of a discrete group, see \cite[\S10]{exel:book}, and as a $C^{*}$-algebraic counterpart to the Hopf algebroid $H_{\text{par}}$ associated to a Hopf algebra $H$  in \cite{alves:representations}.
\old

\section{Dilations}
\label{sec:dilation}

Let $(A,\Delta)$ be a $C^{*}$-bialgebra.  Given a partial coaction of $(A,\Delta)$, a natural and important question is whether it can be identified with as the restriction of a coaction to a weakly invariant $C^{*}$-subalgebra as in Proposition \ref{proposition:restriction}. 
\begin{definition} \label{definition:dilation}
  Let $\delta_{C}$ be a partial coaction of $(A,\Delta)$ on a $C^{*}$-algebra $C$.  A \emph{dilation} of $\delta_{C}$ consists of a $C^{*}$-algebra $B$, a coaction $\delta_{B}$ of $(A,\Delta)$ on $B$, and an embedding $\iota \colon C \hookrightarrow B$ that is a weak morphism from $\delta_{C}$ to $\delta_{B}$, that is, satisfies
  \begin{align*}
   \delta_{B}(\iota(c))(\iota(c') \otimes a) = (\iota \otimes \id_{A})(\delta_{C}(c)(c' \otimes a)) \quad (c,c'\in C,\, a\in A).
  \end{align*}
\end{definition}
 \begin{example}[Disconnected partial actions of groups]
Let $C$ be a $C^{*}$-algebra with a disconnected partial action  $((p_{g})_{g},(\theta_{g})_{g})$  of a discrete group $\Gamma$, and consider the associated partial coaction $\delta_{C}$ of $C_{0}(\Gamma)$ as in Proposition \ref{proposition:partial-group-action}. 

 A dilation of $\delta_{C}$ is given by a $C^{*}$-algebra $B$ with a coaction of $C_{0}(\Gamma)$, that is, by an action $(\alpha_{g})_{ g \in \Gamma }$ of $\Gamma$ on $B$, and an embedding $C \hookrightarrow B$ that is a weak morphism. Suppose that this embedding is strict. By Proposition \ref{proposition:group-morphism}, it is a weak morphism if and only if
\begin{align*}
  p_{g}=1_{C}\alpha_{g}(1_{C}) \quad \text{and} \quad \theta_{g}=\alpha_{g}\big|_{p_{g}C} \quad (g\in \Gamma).
\end{align*}
In particular,  $1_{C}$ commutes with $\alpha_{g}(1_{C})$ for each $g\in \Gamma$. 
We claim that our partial action coincides with the set-theoretic restriction $((D_{g})_{g},(\alpha_{g}|_{D_{g}})_{g})$ of $\alpha$ to $C$, where $D_{g}=\alpha_{g}(C) \cap C$  for each $g\in \Gamma$. Indeed, for every element $c \in D_{g}$ with $0 \leq c\leq 1_{C}$, we have $c \leq 1_{C}$ and $\alpha_{g}^{-1}(c) \leq 1_{C}$, whence $c \leq \alpha_{g}(1_{C})1_{C} =p_{g}$ and $c \in p_{g}C$. On the other hand, if  $c\in p_{g}C$, then $\alpha_{g^{-1}}(c) = \theta_{g^{-1}}(c) \in C$ and hence $c \in \alpha_{g}(C) \cap C = D_{g}$.

Conversely, suppose that $\alpha$ is an action of $\Gamma$ on a $C^{*}$-algebra $B$ that contains $C$ and that $\alpha$ is a dilation in the usual sense, so that $C \subseteq B$ is an ideal, $p_{g}C = \alpha_{g}(C) \cap C$ and $\theta_{g} = \alpha_{g}|_{p_{g}C}$ for each $g\in \Gamma$. If the embedding $C \subseteq B$ is strict, then $C$ is a direct summand, that is, $C=1_{C}B$, and then $\alpha_{g}(C)\cap C = \alpha_{g}(1_{C})1_{C}$ for each $g\in \Gamma$, so that the coaction $\delta_{B}$ corresponding to $\alpha$ is a dilation of $\delta_{C}$.
 \end{example}
The main question is, of course, which partial coactions do have a dilation. We start with a necessary condition.
\begin{definition}
  We call a partial coaction $\delta_{C}$  of $(A,\Delta)$ on a $C^{*}$-algebra $C$ \emph{regular} if
  \begin{align} \label{eq:regularity}
  (\id_{C} \otimes \Delta)(\delta_{C}(C))  \cdot (1_{C} \otimes 1_{A} \otimes A) \subseteq M(C\otimes A) \otimes A.  
  \end{align}
 \end{definition}
 \begin{example}
   \begin{enumerate}
   \item Every coaction is easily seen to be regular.
   \item The question of regularity arises only if $C$ is non-unital, because every partial coaction on a unital $C^{*}$-algebra is regular.
   \item If $A$ is a direct sum of matrix algebras, for example,  if $(A,\Delta)$ is a discrete quantum group, then every partial coaction of $(A,\Delta)$ is regular.
   \end{enumerate}
 \end{example}
   Regularity is necessary for the existence of a dilation with a strict embedding:
\begin{lemma}
  If  a partial coaction has a dilation $(B,\delta_{B},\iota)$, where $\iota$ is strict, then the partial coaction is regular.
\end{lemma}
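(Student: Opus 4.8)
The plan is to transport the regularity of the ambient coaction $\delta_{B}$ (recall that every coaction is regular) along the strict embedding $\iota$, the only delicate point being the descent from $M(B\otimes A)\otimes A$ back to $M(C\otimes A)\otimes A$. First I would record how $\delta_{C}$ sits inside $\delta_{B}$. Put $p:=\iota(1_{M(C)})\in M(B)$, a projection, so that $\iota$ realises $C$ as a non-degenerate subalgebra of the corner $pBp$; strictness of $\iota$ yields strict embeddings $M(C\otimes A)\hookrightarrow M(B\otimes A)$ and $M(C\otimes A\otimes A)\hookrightarrow M(B\otimes A\otimes A)$, and makes $\iota(C)\subseteq B$ weakly invariant. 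Since the dilation presents $\delta_{C}$ as the restriction of $\delta_{B}$ to $\iota(C)$, Proposition~\ref{proposition:restriction}(2) gives
\[
  (\iota\otimes\id_{A})\delta_{C}(c)=\delta_{B}(\iota(c))(p\otimes 1_{A})\qquad(c\in C).
\]

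Next I would apply $\id_{B}\otimes\Delta$ and use coassociativity of the coaction $\delta_{B}$. Because $\id\otimes\Delta$ commutes with the embeddings and is multiplicative, and because $(\id_{B}\otimes\Delta)(p\otimes 1_{A})=p\otimes 1_{A}\otimes 1_{A}$, for $c\in C$ and $a\in A$ one computes
\begin{align*}
 (\iota\otimes\id_{A}\otimes\id_{A})\big[(\id_{C}\otimes\Delta)(\delta_{C}(c))(1_{C}\otimes 1_{A}\otimes a)\big]
 &=(\id_{B}\otimes\Delta)(\delta_{B}(\iota(c)))\,(p\otimes 1_{A}\otimes a)\\
 &=(\id_{B}\otimes\Delta)(\delta_{B}(\iota(c)))(1_{B}\otimes 1_{A}\otimes a)\,(p\otimes 1_{A}\otimes 1_{A}).
\end{align*}
Since $\delta_{B}$ is a coaction, hence regular, the factor $(\id_{B}\otimes\Delta)(\delta_{B}(\iota(c)))(1_{B}\otimes 1_{A}\otimes a)$ lies in $M(B\otimes A)\otimes A$, and right multiplication by $(p\otimes 1_{A})\otimes 1_{A}$ keeps it there. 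Thus the image of $X:=(\id_{C}\otimes\Delta)(\delta_{C}(c))(1_{C}\otimes 1_{A}\otimes a)$ under the strict embedding lies in $M(B\otimes A)\otimes A$.

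Finally comes the descent, which I expect to be the main obstacle. The element $X$ lives in $M(C\otimes A\otimes A)$, and for every $\omega\in A^{*}$ its slice in the third leg is
\[
 (\id_{C}\otimes\id_{A}\otimes\omega)(X)=\big(\id_{C}\otimes(\id_{A}\otimes\omega(\,\cdot\,a))\circ\Delta\big)(\delta_{C}(c))\in M(C\otimes A),
\]
so every third-leg slice of the image $(\iota\otimes\id_{A}\otimes\id_{A})(X)$ lies in the $C^{*}$-subalgebra $(\iota\otimes\id_{A})(M(C\otimes A))\subseteq M(B\otimes A)$. Having already placed this image in $M(B\otimes A)\otimes A$, I would invoke the slice map property of $A$ for the pair $(\iota\otimes\id_{A})(M(C\otimes A))\subseteq M(B\otimes A)$ to conclude that the image in fact lies in $(\iota\otimes\id_{A})(M(C\otimes A))\otimes A$; injectivity of the embedding then yields $X\in M(C\otimes A)\otimes A$, which is exactly regularity. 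The crux is precisely this passage from ``all third-leg slices lie in $M(C\otimes A)$'' to membership in the minimal tensor product $M(C\otimes A)\otimes A$: without the slice map property one only controls the Fubini product, and it is here that the standing hypothesis on $A$ in this section is used.
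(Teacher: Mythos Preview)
Your overall strategy coincides with the paper's: push $(\id_{C}\otimes\Delta)\delta_{C}(c)\,(1\otimes 1\otimes a)$ into $M(B\otimes A\otimes A)$ via the strict embedding, rewrite using the weak-morphism relation $(\iota\otimes\id_{A})\delta_{C}(c)=\delta_{B}(\iota(c))(p\otimes 1_{A})$ and coassociativity of $\delta_{B}$, and observe that the image lands in $M(B\otimes A)\otimes A$. The paper does exactly this, writing the image as $(\delta_{B}\otimes\id_{A})(\delta_{B}(\iota(C))(1_{B}\otimes A))\cdot(\iota(1_{C})\otimes 1_{A}\otimes 1_{A})$ and appealing to $\delta_{B}(\iota(C))(1_{B}\otimes A)\subseteq B\otimes A$.

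Where you diverge is the final descent. The paper simply asserts at the outset that ``it suffices'' to show the image lies in $M(B\otimes A)\otimes A$ and gives no further argument; you treat this passage from $M(B\otimes A)\otimes A$ back to $M(C\otimes A)\otimes A$ as the crux and invoke the slice map property of $A$ (applied to the $C^{*}$-subalgebra $(\iota\otimes\id_{A})(M(C\otimes A))\subseteq M(B\otimes A)$) to justify it. Your caution is reasonable --- the equality $(\iota\otimes\id_{A})(M(C\otimes A))\otimes A = (M(B\otimes A)\otimes A)\cap(\iota\otimes\id_{A}\otimes\id_{A})(M(C\otimes A\otimes A))$ is not obvious for a general strict embedding. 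However, note that in the paper the slice map hypothesis is \emph{not} assumed for this lemma; it is introduced only in the paragraph preceding Proposition~\ref{proposition:dilation-canonical}. So the paper regards the descent as immediate, whereas you supply an explicit (and hypothesis-consuming) justification. Your argument is sound once the slice map property is in force, and otherwise identical to the paper's; but be aware that you are importing an assumption the lemma as stated does not carry.
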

\begin{proof}
Suppose that $\delta_{C}$ is  a partial coaction of $(A,\Delta)$ on a $C^{*}$-algebra $C$ with a dilation $(B,\delta_{B},\iota)$.  It suffices to show that the product
   \begin{align*}
     (\iota \otimes \id_{A} \otimes \id_{A})((\id_{C} \otimes \Delta)\delta_{C}(C)) \cdot (1_{B}\otimes 1_{A} \otimes A)
\end{align*}
lies in $M(B\otimes A) \otimes A$. Since $\iota$ is a weak morphism, this product is equal to
\begin{align*}
                                                        (\id_{C} \otimes  \Delta)(\delta_{B}(\iota(C))) \cdot (\iota(1_{C}) \otimes 1_{A} \otimes A),
   \end{align*}
which by coassociativity of $\delta_{B}$ can be rewritten as
\begin{align*}
 (\delta_{B} \otimes \id_{A})(\delta_{B}(\iota(C))(1_{B} \otimes A))  \cdot (\iota(1_{C}) \otimes 1_{A} \otimes 1_{A}),
\end{align*}
and this product  lies in $M(B\otimes A) \otimes A$ because $\delta_{B}(\iota(C))(1_{B}\otimes A) \subseteq B\otimes A$.  
\end{proof}
If $(A,\Delta)$ is a regular $C^{*}$-quantum group, for example, a compact one, and if $\delta_{C}$ is weakly continuous, then regularity of $\delta_{C}$ can be tested on the unit:
\begin{lemma}
  Let $(A,\Delta)$ be a regular $C^{*}$-quantum group and let $\delta_{C}$ be a weakly continuous partial coaction  of $(A,\Delta)$ on a $C^{*}$-algebra $C$ such that
  \begin{align*}
    (\id_{C} \otimes \Delta)(\delta_{C}(1_{C})) \cdot (1_{C} \otimes 1_{A} \otimes A) \subseteq M(C \otimes A) \otimes A.
  \end{align*}
Then $\delta_{C}$ is regular.
  \end{lemma}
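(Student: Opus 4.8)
The plan is to reduce the regularity condition for a general $c \in C$ to the hypothesis on $1_{C}$ by means of weak continuity and the identity \eqref{eq:delta-act}. First I would observe that the set of $c \in C$ for which $(\id_{C} \otimes \Delta)(\delta_{C}(c))(1_{C} \otimes 1_{A} \otimes a) \in M(C\otimes A)\otimes A$ for all $a \in A$ is a closed subspace; since $\delta_{C}$ is weakly continuous, $C = [A^{*} \triangleright C]$, so it suffices to treat elements $c = \omega \triangleright c_{0}$ with $\omega \in A^{*}$ and $c_{0} \in C$. For such $c$, equation \eqref{eq:delta-act} gives $\delta_{C}(\omega \triangleright c_{0}) = \delta_{C}(1_{C}) \cdot g$, where $g = (\id_{C} \otimes \id_{A} \otimes \omega)(\id_{C} \otimes \Delta)\delta_{C}(c_{0}) = (\id_{C} \otimes \phi_{\omega})\delta_{C}(c_{0})$ with $\phi_{\omega} = (\id_{A} \otimes \omega)\Delta$. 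Applying the strict $*$-homomorphism $\id_{C} \otimes \Delta$ and right multiplying by $1_{C} \otimes 1_{A} \otimes a$, I would factor the result as $(\id_{C} \otimes \Delta)(\delta_{C}(1_{C})) \cdot T(1_{C} \otimes 1_{A} \otimes a)$, where $T := (\id_{C} \otimes \Delta)(g)$.

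The second step isolates the role of the hypothesis. Writing $Q := (\id_{C} \otimes \Delta)(\delta_{C}(1_{C}))$, the assumption is precisely that $Q(1_{C} \otimes 1_{A} \otimes A) \subseteq M(C \otimes A) \otimes A$. I would verify that left multiplication by $Q$ preserves $M(C \otimes A) \otimes A$: for $m \otimes b$ with $m \in M(C \otimes A)$ and $b \in A$ one has $Q(m \otimes b) = Q(1_{C} \otimes 1_{A} \otimes b)(m \otimes 1_{A})$, since $m \otimes 1_{A}$ and $1_{C} \otimes 1_{A} \otimes b$ act on disjoint legs and commute; the first factor lies in $M(C \otimes A) \otimes A$ by hypothesis and right multiplication by $m \otimes 1_{A}$ keeps it there. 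By continuity this reduces the entire problem to the single claim $T(1_{C} \otimes 1_{A} \otimes a) \in M(C \otimes A) \otimes A$.

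The heart of the argument, and the step I expect to be the main obstacle, is this last claim. Using coassociativity and the relation $\Delta \circ \phi_{\omega} = (\id_{A} \otimes \phi_{\omega}) \circ \Delta$, a short computation rewrites $T(1_{C} \otimes 1_{A} \otimes a) = (\id_{C} \otimes \id_{A} \otimes \psi_{a})\big((\id_{C} \otimes \Delta)\delta_{C}(c_{0})\big)$, where $\psi_{a}(x) = (\id_{A} \otimes \omega)(\Delta(x)(a \otimes 1_{A}))$. Factoring $\omega = \upsilon(\,\cdot\,a')$ by Cohen's theorem and invoking the cancellation conditions of the $C^{*}$-quantum group $(A,\Delta)$ (i.e. its regularity), one sees that $\Delta(x)(a \otimes a') \in A \otimes A$ for every $x \in M(A)$, so $\psi_{a}$ is a bounded map $M(A) \to A$, strict-to-norm continuous on bounded sets. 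Hence the outer leg of $T(1_{C} \otimes 1_{A} \otimes a)$ genuinely lands in $A$; the difficulty is that the inner leg of $(\id_{C} \otimes \Delta)\delta_{C}(c_{0})$ is only a multiplier, so membership in $M(C \otimes A) \otimes A$ is not immediate. In fact the obstruction is the \emph{free} middle leg ($1_{A}$ rather than an element of $A$): since $1_{A} \notin A$ when $A$ is nonunital, the desired statement cannot be reached from the easier variant — in which that leg ranges over $A$ — by any norm approximation.

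To overcome this I would invoke the slice map property of $A$ (the standing assumption of Subsection \ref{subsection:slice}). For every $\rho \in A^{*}$ the slice $(\id_{C} \otimes \id_{A} \otimes \rho)$ of $T(1_{C} \otimes 1_{A} \otimes a)$ equals $(\id_{C} \otimes \phi_{\eta})\delta_{C}(c_{0}) \in M(C \otimes A)$ for the functional $\eta = (\rho \otimes \omega)(\Delta(\,\cdot\,)(a \otimes 1_{A})) \in A^{*}$; that is, all such slices lie in the subalgebra $M(C \otimes A)$. The slice map property then upgrades this pointwise information to genuine membership $T(1_{C} \otimes 1_{A} \otimes a) \in M(C \otimes A) \otimes A$, which together with the first two steps yields regularity of $\delta_{C}$. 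I would finally remark that Proposition \ref{proposition:regular-weak-strong} furnishes the Podle\'s condition, from which the weaker variant with both inner legs ranging over $A$ follows directly via cancellation; thus the only genuine work is the passage to the free inner leg carried out above.
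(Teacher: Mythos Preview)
Your overall reduction via weak continuity and \eqref{eq:delta-act} matches the paper, but the last step has a genuine gap: you invoke the slice map property of $A$, and that is \emph{not} a hypothesis of this lemma. In the paper, the slice map assumption is introduced only afterwards (``From now on, we almost always assume \ldots''), precisely because this lemma is meant to hold for any regular $C^{*}$-quantum group. Relatedly, you never actually use the regularity hypothesis you do have: when you write ``the cancellation conditions of the $C^{*}$-quantum group $(A,\Delta)$ (i.e.\ its regularity)'', you are conflating two different things. Cancellation is automatic for $C^{*}$-quantum groups; regularity is the extra bicharacter condition $[(1\otimes A)W^{A}(\hat A\otimes 1)]=\hat A\otimes A$, and the paper uses it essentially. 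Even granting the slice map property, your application is not clean: the slice map property upgrades $x\in B\otimes A$ with slices in $D\subseteq B$ to $x\in D\otimes A$, but you have not placed $T(1_{C}\otimes 1_{A}\otimes a)$ inside any $B\otimes A$ to begin with --- a priori it only lies in $M(C\otimes A\otimes A)$.

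The paper handles your ``main obstacle'' differently, by the same anti-Heisenberg pair trick as in Proposition~\ref{proposition:regular-weak-strong}. One writes $\Delta^{(2)}=(\id_{A}\otimes\Delta)\Delta$ and implements the last $\Delta$ as conjugation by the unitary $V$ on the $34$ legs. Then
\[
[(\id_{C}\otimes\id_{A}\otimes\id_{A}\otimes\omega)\big((\id_{C}\otimes\Delta^{(2)})\delta_{C}(C)\,(1\otimes 1\otimes A\otimes 1)\big):\omega\in A^{*}]
\]
becomes a closed span involving $V_{34}\,(\cdots)_{124}\,V_{34}^{*}\,(A\otimes\hat\pi(\hat A))_{34}$; regularity \eqref{eq:weakly-regular} lets you absorb $V_{34}^{*}$ and then $V_{34}$ into the $(A\otimes\hat\pi(\hat A))_{34}$ factors, after which the $34$ legs decouple and the whole span collapses to
\[
[\,(\id_{C}\otimes\id_{A}\otimes\omega)(\id_{C}\otimes\Delta)\delta_{C}(C):\omega\in A^{*}\,]\otimes A\ \subseteq\ M(C\otimes A)\otimes A.
\]
Multiplying on the left by $(\id_{C}\otimes\Delta)\delta_{C}(1_{C})$ and using the assumption on the unit finishes the proof, with no slice map property needed.
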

\begin{proof}
We use the same notation and a similar argument as in  the proof of Proposition \ref{proposition:regular-weak-strong}.  By \eqref{eq:delta-act},
  \begin{align*}
    (\id_{C} \otimes \Delta)(\delta_{C}(\omega \triangleright c)) =(\id_{C} \otimes \Delta)(\delta_{C}(1_{C}))  \cdot (\id_{C} \otimes \id_{A} \otimes \id_{A} \otimes \omega )  (\id_{C} \otimes \Delta^{(2)})\delta_{C}(c))
  \end{align*}
  for all $\omega \in A^{*}$ and $c\in C$, where $\Delta^{(2)}=(\Delta\otimes \id_{A})\Delta = (\id_{A} \otimes \Delta)\Delta$. Since $\delta_{C}$ is weakly continuous, we can conclude that $[(\id_{C} \otimes \Delta)\delta_{C}(C) \cdot (1_{C} \otimes 1_{A} \otimes A)]$ is equal to the product of $(\id_{C} \otimes \Delta)(\delta_{C}(1_{C}))$ with
  \begin{align*}
     [( \id_{C} \otimes \id_{A} \otimes \id_{A} \otimes \omega)((\id_{C} \otimes \Delta^{(2)})(\delta_{C}(C)) (1_{C}\otimes 1_{A} \otimes A \otimes 1_{A})) : \omega \in A^{*}].
  \end{align*}
Similarly as in the proof of Proposition \ref{proposition:regular-weak-strong}, we  rewrite this  space in the form
  \begin{align*}
 [( \id_{C} \otimes& \id_{A} \otimes \id_{A} \otimes \omega)(V_{34}(\id_{C} \otimes (\id_{A} \otimes \pi) \Delta)(\delta_{C}(C))_{124} V_{34}^{*}(A \otimes \hat\pi(\hat{A}))_{34}) : \omega \in \mathcal{B}(K)_{*}] \\
&=   [( \id_{C} \otimes \id_{A} \otimes \id_{A} \otimes \omega)((A \otimes \hat\pi(\hat{A}))_{34}(\id_{C} \otimes (\id_{A} \otimes \pi) \Delta)(\delta_{C}(C))_{124}) : \omega \in \mathcal{B}(K)_{*}] \\
&=   [( (\id_{C} \otimes \id_{A} \otimes \omega \circ \pi)(\id_{C} \otimes \Delta)\delta_{C}(C)) \otimes A : \omega \in \mathcal{B}(K)_{*} ] \\
&\subseteq M(C \otimes A) \otimes A.
  \end{align*}
Summarising, we find that
\begin{align*}
  (\id_{C} \otimes \Delta)\delta_{C}(C) \cdot (1_{C} \otimes 1_{A} \otimes A) \subseteq (\id_{C} \otimes \Delta)\delta_{C}(1_{C}) \cdot (M(C\otimes A) \otimes A).
\end{align*}
By assumption on $\delta_{C}(1_{C})$, the right hand side lies in $M(C \otimes A) \otimes A$.
\end{proof}

For partial actions of a group $G$ on a  space $X$, a canonical dilation can be constructed as a certain quotient of the product $X\times G$; see \cite{abadie:takai} or \cite[Theorem 3.5, Proposition 5.5]{exel:book}. We now give a dual construction. Although this one will be improved upon  in the next section, we decided to include it for instructive purpose, see also  Example \ref{example:globalization-group}.

From now on, we almost always assume  the $C^{*}$-algebra underlying our $C^{*}$-bialgebra to have the slice map property, which holds, for example, if it is nuclear; see \ref{subsection:slice}. 
\begin{proposition} \label{proposition:dilation-canonical}
Let $\delta_{C}$ be an injective, regular partial coaction of a $C^{*}$-bialgebra $(A,\Delta)$ on a $C^{*}$-algebra $C$ and suppose that $A$ has the slice map property.
   Denote by $C \boxtimes A \subseteq M(C\otimes A)$ the subset of all $x$ satisfying the following conditions:
  \begin{enumerate}
  \item $[x,\delta_{C}(1_{C})]=0$;
  \item $(\delta_{C} \otimes \id_{A})(x) = (\delta_{C}(1_{C}) \otimes 1_{A})(\id_{C} \otimes \Delta)(x) = (\id_{C} \otimes \Delta)(x)(\delta_{C}(1_{C}) \otimes 1_{A})$;
  \item $x(1_{C} \otimes A)$ and $(1_{C} \otimes A)x$ lie in $C\otimes A$;
  \item $(\id_{C} \otimes \Delta)(x) (1_{C} \otimes 1_{A} \otimes A)$ and $(1_{C}\otimes 1_{A} \otimes A)(\id_{C} \otimes \Delta)(x)$ lie in $M(C\otimes A) \otimes A$.
  \end{enumerate}
 Then $C\boxtimes A$ is a $C^{*}$-algebra,  $\id_{C} \otimes \Delta$ restricts to a coaction of $(A,\Delta)$ on $C\boxtimes A$, and $(C\boxtimes A, \id_{C} \otimes \Delta, \delta_{C})$ is a dilation of $\delta_{C}$.
\end{proposition}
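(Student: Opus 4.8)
The plan is to set $B := C \boxtimes A$, take $\delta_{B}$ to be the restriction of $\id_{C} \otimes \Delta$ to $B$, and let $\iota := \delta_{C}$ play the role of the embedding $C \hookrightarrow B$; I would then verify, in order, that $B$ is a $C^{*}$-algebra, that $\delta_{C}$ maps $C$ into $B$, that $\delta_{B}$ is a genuine (non-partial) coaction on $B$, and finally the dilation identity of Definition \ref{definition:dilation}. It is convenient to abbreviate $P := \delta_{C}(1_{C}) \otimes 1_{A}$, a projection in $M(C \otimes A \otimes A)$, and to observe at the outset that the two equalities in condition~(2) say exactly that $(\id_{C} \otimes \Delta)(x)$ commutes with $P$ and that $(\delta_{C} \otimes \id_{A})(x) = P(\id_{C} \otimes \Delta)(x) = (\id_{C} \otimes \Delta)(x)P$ for every $x \in C \boxtimes A$.

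That $C \boxtimes A$ is a closed self-adjoint subspace is routine: conditions (1) and (2) are kernels of norm-continuous maps (the maps $\delta_{C} \otimes \id_{A}$ and $\id_{C} \otimes \Delta$ being norm-contractive $*$-homomorphisms), conditions (3) and (4) survive norm limits because $C \otimes A$ and $M(C \otimes A) \otimes A$ are closed, and stability under adjoints uses that $P$ is self-adjoint and that the $*$-operation preserves $C \otimes A$ and $M(C \otimes A) \otimes A$. The one substantial point is closure under multiplication, and here the commutation observation above does the work: for $x, y \in C \boxtimes A$ the element $(\id_{C} \otimes \Delta)(xy) = (\id_{C} \otimes \Delta)(x)(\id_{C} \otimes \Delta)(y)$ again commutes with $P$, whence $(\delta_{C} \otimes \id_{A})(xy) = P(\id_{C} \otimes \Delta)(xy)P = P(\id_{C} \otimes \Delta)(xy) = (\id_{C} \otimes \Delta)(xy)P$, which is condition~(2) for $xy$. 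Condition~(1) for $xy$ is the Leibniz identity, and conditions (3) and (4) for $xy$ follow from those for $x$ and $y$ by factoring elements of $A$ as products (Cohen) and using that $C \otimes A$ and $M(C \otimes A) \otimes A$ are subalgebras; for condition~(4), writing $a = a_{1}a_{2}$, condition~(4) for $y$ gives $(\id_{C} \otimes \Delta)(y)(1_{C} \otimes 1_{A} \otimes a) \in M(C \otimes A) \otimes A$, while condition~(4) for $x$ shows that left multiplication by $(\id_{C} \otimes \Delta)(x)$ preserves $M(C \otimes A) \otimes A$.

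Next I would show $\delta_{C}(C) \subseteq C \boxtimes A$, so that $\iota = \delta_{C}$ is a well-defined embedding, injective by hypothesis. For $x = \delta_{C}(c)$, condition~(1) holds because $\delta_{C}(1_{C})\delta_{C}(c) = \delta_{C}(c) = \delta_{C}(c)\delta_{C}(1_{C})$; the left-hand equality of condition~(2) is precisely the partial coassociativity \eqref{eq:partial-coaction}, and the right-hand equality is obtained by applying \eqref{eq:partial-coaction} to $c^{*}$ and taking adjoints; condition~(3) is the defining inclusion $\delta_{C}(C)(1_{C} \otimes A) \subseteq C \otimes A$ together with its adjoint; and condition~(4) is exactly the regularity assumption \eqref{eq:regularity} and its adjoint. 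With this in place the dilation identity follows from the right-handed form of the partial coassociativity: for $c, c' \in C$ and $a \in A$,
\begin{align*}
 (\delta_{C} \otimes \id_{A})\bigl(\delta_{C}(c)(c' \otimes a)\bigr) &= (\delta_{C} \otimes \id_{A})\delta_{C}(c) \cdot (\delta_{C}(c') \otimes a) \\
 &= (\id_{C} \otimes \Delta)\delta_{C}(c)\,\bigl(\delta_{C}(1_{C})\delta_{C}(c') \otimes a\bigr) \\
 &= (\id_{C} \otimes \Delta)(\delta_{C}(c))\,(\delta_{C}(c') \otimes a),
\end{align*}
where the last step uses $\delta_{C}(1_{C})\delta_{C}(c') = \delta_{C}(c')$; the outer terms are exactly $(\iota \otimes \id_{A})(\delta_{C}(c)(c' \otimes a))$ and $\delta_{B}(\iota(c))(\iota(c') \otimes a)$, as required.

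It remains to prove that $\delta_{B} = (\id_{C} \otimes \Delta)|_{B}$ is a coaction on $B$, and this is where the real difficulty lies. Full coassociativity $(\delta_{B} \otimes \id_{A})\delta_{B} = (\id_{B} \otimes \Delta)\delta_{B}$ is immediate from the coassociativity of $\Delta$ (the correction term present for partial coactions is absent here). The hard part, and the place where the slice map property of $A$ is indispensable, is to show that $\delta_{B}$ actually takes values in $M(B \otimes A)$ and is nondegenerate, that is, that $(\id_{C} \otimes \Delta)(x)$ multiplies $B \otimes A \subseteq M(C \otimes A) \otimes A$ into itself. The strategy is: given $x, y \in B$ and $a \in A$, first use conditions (3) and (4) (after factoring $a$) to place $(\id_{C} \otimes \Delta)(x)(y \otimes a)$ into the third-leg tensor product $M(C \otimes A) \otimes A$; then, since $A$ has the slice map property applied with ambient algebra $M(C \otimes A)$ and subalgebra $B$ (see \ref{subsection:slice}), conclude membership in $B \otimes A$ by checking that every slice $(\id_{C \otimes A} \otimes \omega)$, $\omega \in A^{*}$, of this element lies in $B$. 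I expect this last verification to be the main obstacle: one must show that $B$ is stable under the slicing maps $x \mapsto (\id_{C} \otimes (\id_{A} \otimes \omega)\Delta)(x)$ that arise, which ultimately amounts to re-establishing conditions (1)--(4) for the slices, and the four defining conditions have been arranged precisely so that this (and the analogous density statement needed for nondegeneracy) go through. Once the range and nondegeneracy are secured, $\delta_{B}$ is a coaction and $(C \boxtimes A, \id_{C} \otimes \Delta, \delta_{C})$ is the desired dilation.
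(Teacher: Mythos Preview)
Your proposal is correct and follows essentially the same route as the paper. The paper is terser where you are explicit (it dismisses the $C^{*}$-algebra claim and the inclusion $\delta_{C}(C)\subseteq C\boxtimes A$ in one line each, and does not write out the dilation identity), and conversely it carries out in full the part you leave as an outline: for $y\in C\boxtimes A$ and $\omega\in A^{*}$ it verifies conditions (1)--(4) one by one for the slice $x=(\id_{C}\otimes(\id_{A}\otimes\omega)\Delta)(y)$, exactly as you anticipate. One small simplification worth noting: rather than working with $(\id_{C}\otimes\Delta)(x)(y\otimes a)$, it is enough to establish $(\id_{C}\otimes\Delta)(B)(1_{C}\otimes 1_{A}\otimes A)\subseteq B\otimes A$; once this holds, the multiplier property of $\delta_{B}(x)$ on $B\otimes A$ follows by Cohen factorization $a=a_{1}a_{2}$ and $(\id_{C}\otimes\Delta)(x)(y\otimes a)=\bigl((\id_{C}\otimes\Delta)(x)(1\otimes 1\otimes a_{1})\bigr)(y\otimes a_{2})\in(B\otimes A)(B\otimes A)$. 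This streamlines the slice-map step since then only condition~(4), not~(3), is needed to land in $M(C\otimes A)\otimes A$, and the slices are exactly the maps $y\mapsto(\id_{C}\otimes(\id_{A}\otimes\omega)\Delta)(y)$ with no extra $y$-factor to track.
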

\begin{proof}
  Clearly, $C\boxtimes A$ is a $C^{*}$-algebra.  It contains $\delta_{C}(C)$ by \eqref{eq:partial-coaction} and regularity of $\delta_{C}$.  Next, we need to show that
\begin{align*}
  (\id_{C} \otimes \Delta)(C\boxtimes A) (1_{C} \otimes 1_{A} \otimes A) \subseteq (C \boxtimes A) \otimes A.
\end{align*}
Condition (4) implies that the left hand side is contained in $M(C\otimes A) \otimes A$. Since $A$ has the slice map property, it  suffices to show that for every $y \in C \boxtimes A$ and $\omega \in A^{*}$, the element
\begin{align*}
  x:=(\id_{C} \otimes \id_{A} \otimes \omega)(\id_{C} \otimes \Delta)(y)  = (\id_{C} \otimes (\id_{A} \otimes \omega)\Delta)(y)
\end{align*}
lies in $C \boxtimes A$, that is, satisfies conditions (1)--(4) above. In case of (2)--(4), we only prove the first halfs of the statements, the others follow similarly.

(1) The element $x$ commutes with $\delta_{C}(1_{C})$ because $(\id_{C} \otimes \Delta)(y)$ commutes with $(\delta_{C}(1_{C}) \otimes 1_{A})$ by (2), applied to $y$.

(2) We use (1) for  $y$ and coassociativity of $\Delta$ to see that
\begin{align*}
  (\delta_{C} \otimes \id_{A})(x) &= (\id_{C} \otimes \id_{A} \otimes (\id_{A} \otimes \omega) \Delta)(\delta_{C} \otimes \id_{A})(y) \\
&=   (\id_{C} \otimes \id_{A} \otimes (\id_{A} \otimes \omega) \Delta)((\delta_{C}(1) \otimes 1_{A}) (\id_{C} \otimes \Delta)(y)) \\
&= (\delta_{C}(1_{C}) \otimes 1_{A}) (\id_{C} \otimes (\id_{A} \otimes \id_{A} \otimes \omega)\Delta^{(2)})(y) \\
&= (\delta_{C}(1_{C}) \otimes 1_{A})(\id_{C} \otimes \Delta) (\id_{A} \otimes (\id_{A} \otimes \omega)\Delta)(y) \\ & =  (\delta_{C}(1_{C}) \otimes 1_{A})(\id_{C} \otimes \Delta)(x).
\end{align*}

(3)  Write $\omega=a\upsilon$ with $a\in A$ and $\upsilon \in A^{*}$ using Cohen's factorisation theorem, and let $a' \in A$. Then
\begin{align*}
  x(1_{C} \otimes a') = (\id_{C} \otimes \id_{A} \otimes \upsilon)((\id_{C} \otimes\Delta)(y) (1_{C} \otimes a' \otimes a)).
\end{align*}
We use the relation $A \otimes A =[\Delta(A)(A \otimes A]$ and condition (3) on $y$ and find that
$x(1_{C} \otimes a')$ lies in $C \otimes A$ as desired.

(4)  With $a,a',\upsilon$ as above,  
\begin{align*}
  (\id_{C} \otimes \Delta)(x) \cdot (1_{C} \otimes 1_{A}\otimes a') &= (\id_{C} \otimes \id_{A} \otimes \id_{A} \otimes \upsilon)((\id_{C} \otimes \Delta^{(2)})(y)\cdot (1_{C} \otimes 1_{A} \otimes a' \otimes a)).
\end{align*}
We use the relation $A \otimes A =[\Delta(A)(A \otimes A]$ again  and find that
\begin{align*}
  (\id_{C} \otimes \Delta^{(2)})(y) &\cdot (1_{C} \otimes 1_{A} \otimes a' \otimes a) \\  &\in
  (\id_{C} \otimes \id_{A} \otimes \Delta)((\id_{C} \otimes \Delta)(y)\cdot (1_{C} \otimes 1_{A} \otimes A)) \cdot (1_{C} \otimes 1_{A} \otimes A \otimes A).
\end{align*}
 Condition (4), applied to $y$, implies that the expression above lies in $M(C \otimes A) \otimes A \otimes A$. Slicing  the last factor with $\upsilon$, we get $  (\id_{C} \otimes \Delta)(x) \cdot (1_{C} \otimes 1_{A}\otimes a') \in M(C \otimes A) \otimes A$.
\end{proof}
\begin{example}[Case of a partial group action] \label{example:globalization-group}
  Consider the partial coaction $\delta_{C}$ associated to a disconnected partial action $((p_{g}),(\theta_{g})_{g})$ of a discrete group $\Gamma$ on a $C^{*}$-algebra $C$.  Identify $M(C \otimes C_{0}(\Gamma))$  with $C_{b}(\Gamma; M(C))$ and let $f\in C_{b}(\Gamma;M(C))$. Then  conditions (1) and (4) in  Proposition \ref{proposition:dilation-canonical} are automatically satisfied by $f$, condition (3) is equivalent to $f \in C_{b}(\Gamma;C)$, and condition (2)  corresponds to the invariance condition
\begin{align*} 
  \theta_{g}(p_{g^{-1}}f(h)) = p_{g}f(gh)  \quad (g,h \in \Gamma).
\end{align*}
In particular, if $C=C_{0}(X)$ for some locally compact, Hausdorff space $X$, then each $p_{g}$ is the characteristic function of some clopen $D_{g} \subseteq X$, each $\theta_{g}$ is the pull-back along some homeomorphism $\alpha_{g^{-1}}\colon D_{g} \to D_{g^{-1}}$, and the invariance condition above translates into
\begin{align*}
  f(x,gh) = f(\alpha_{g^{-1}}(x),h)  \quad (g,h\in \Gamma, x\in D_{g}),
\end{align*}
so that $f$ descends to the quotient space of $X\times \Gamma$ with respect to the equivalence relation given by
 $(x,gh) \sim (\alpha_{g^{-1}}(x),h)$ for all $g,h\in \Gamma$ and $x\in D_{g}$. This space is, up to the reparameterization $(x,g) \mapsto (g^{-1},x)$, the globalization of the partial action $((D_{g})_{g},(\alpha_{g})_{g})$ of $\Gamma$ on $X$, see \cite[Theorem 3.5, Proposition 5.5]{exel:book}, and  $C_{0}(X) \boxtimes C_{0}(\Gamma)$ can be identified with a $C^{*}$-subalgebra of $C_{b}((X\times \Gamma)/_{\sim})$. 
\end{example}

\section{Minimal dilations}
\label{sec:minimal}

Among all dilations of a fixed partial coaction $\delta_{C}$ of a $C^{*}$-bialgebra $(A,\Delta)$, we now single out a universal one, which we call the globalization of $\delta_{C}$. More precisely, we  show that (1) every dilation of $\delta_{C}$ contains one that is minimal in a natural sense, and (2) that all such minimal dilations are isomorphic. We need to assume, however, that $\delta_{C}$ is  regular and injective,  that $A$ has the slice map property, and, for (2), that $(A,\Delta)$ is a $C^{*}$-quantum group. 
\begin{definition}
  Let $\delta_{C}$ be a partial coaction of $(A,\Delta)$ on a $C^{*}$-algebra $C$. We call a dilation $(B,\delta_{B},\iota)$ of $\delta_{C}$ \emph{minimal} if $\iota(C)$ and $A^{*} \triangleright \iota(C)$ generate $B$ as a $C^{*}$-algebra.
\end{definition}
\begin{remark}\label{remark:dilation-ideal}
  Let $(B,\delta_{B},\iota)$ be a minimal dilation of a partial coaction $\delta_{C}$ of $(A,\Delta)$ on some $C^{*}$-algebra $C$.   Then $\iota(C) \subseteq B$  is an ideal because $\iota(C)(A^{*} \triangleright \iota(C)) = \iota(C)  \iota(A^{*} \triangleright C) \subseteq \iota(C)$    by  \eqref{eq:morphism-act}. If, moreover, $\iota$ is strict, then $\iota(C)$ is a direct summand of $B$.
  \end{remark}
\begin{example}
  If, in the situation above, $(A,\Delta)$ is the $C^{*}$-bialgebra of functions on a discrete group $\Gamma$, then the coaction $\delta_{B}$ corresponds to an action $\alpha$ of $\Gamma$ on $B$, and the dilation is minimal if and only if   $\sum_{g\in \Gamma} \alpha_{g}(\iota(C))$ generates $B$ as a $C^{*}$-algebra.  
\end{example}
\new \begin{example}[Partial Bernoulli shift]
Let $\qG=(C_{0}(\qG),\Delta)$   be a discrete $C^{*}$-quantum group.  Denote by $\delta$ the coaction of $C_{0}(\qG)$ on $C(\qB_{\qG})$, see Section \ref{section:bernoulli},  by $\delta^{\varepsilon}$ its restriction to a partial coaction on $C(\qB^{\varepsilon}_{\qG})$,  by $\delta^{\times}$ the coaction of $C_{0}(\qG)$ on $C_{0}(\qB^{\times}_{\qG})$ obtained as the restriction of  $\delta$, see Proposition \ref{proposition:bernoulli-coaction}, and by $\iota \colon C(\qB^{\varepsilon}_{\qG}) \hookrightarrow C_{0}(\qB^{\times}_{\qG})$ the inclusion. Then $(C_{0}(\qB^{\times}_{\qG}),\delta^{\times},\iota)$ is a  dilation of $\delta^{\varepsilon}$ because $\delta^{\varepsilon}$ is a restriction of $\delta^{\times}$, and this dilation is minimal. Indeed, $\delta^{\times}(p_{\varepsilon}) = p$ by Lemma \ref{lemma:bernoulli-counit}, whence $C_{0}(\qG)^{*} \triangleright \iota(C_{0}(\qB^{\varepsilon}_{\qG}))$ contains   $p^{\alpha}_{ij}$ for every $\alpha,i,j$, and these elements generate $C_{0}(\qB^{\times}_{\qG})$.
\end{example} \old
Every dilation contains a minimal one:
\begin{proposition} \label{proposition:dilation-minimal} Let $\delta_{C}$ be a  partial coaction of $(A,\Delta)$ on a $C^{*}$-algebra $C$ with a dilation $(B,\delta_{B},\iota)$, and suppose that $A$ has the slice map property.  Denote by $B_{0}\subseteq B$ the $C^{*}$-subalgebra generated by $\iota(C)$ and $A^{*} \triangleright \iota(C)$. 
  \begin{enumerate}
  \item  $\delta_{B}$ restricts to a coaction $\delta_{B_{0}}$ on
    $B_{0}$, and $(B_{0},\delta_{B_{0}},\iota)$ is a minimal dilation
    of $\delta_{C}$.
  \item If $(A,\Delta)$ is a regular $C^{*}$-quantum group and $\delta_{C}$ is weakly continuous,  then $[A^{*} \triangleright \iota(C)] \subseteq B$  is  a $C^{*}$-algebra. If additionally $\iota$ is strict, then  $B_{0}=[(A^{*} \triangleright \iota(C))(\C 1_{B} + \C \iota(1_{C}))]$.  
  \end{enumerate}
\end{proposition}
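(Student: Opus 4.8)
The plan is to handle the two parts separately, using throughout the convolution identity
\[
  \upsilon \triangleright (\omega \triangleright \iota(c)) = (\upsilon\omega) \triangleright \iota(c) \qquad (\upsilon,\omega\in A^{*},\ c\in C),
\]
which follows from coassociativity of the coaction $\delta_{B}$ as in \eqref{eq:delta-act} (now with $\delta_{B}(1)=1$); in particular $A^{*}\triangleright\iota(C)$ is stable under the $A^{*}$-action. For part (1) I would first show that $B_{0}$ is invariant, i.e.\ $\delta_{B}(B_{0})(1_{B}\otimes A)\subseteq B_{0}\otimes A$. For a generator $g\in\iota(C)\cup(A^{*}\triangleright\iota(C))$ and $a\in A$, slicing $\delta_{B}(g)(1_{B}\otimes a)$ in the last leg by an arbitrary $\upsilon\in A^{*}$ gives, by the convolution identity, an element of $A^{*}\triangleright\iota(C)\subseteq B_{0}$; since $A$ has the slice map property, $\delta_{B}(g)(1_{B}\otimes a)\in B_{0}\otimes A$, and likewise $(1_{B}\otimes a)\delta_{B}(g)\in B_{0}\otimes A$ after applying $\ast$. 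The set of $x$ satisfying both inclusions is a closed $\ast$-subalgebra (closure under products because $B_{0}\otimes A$ absorbs right multiplication by $M(B_{0})\otimes M(A)$), hence contains $B_{0}$. Thus $\delta_{B}(B_{0})\subseteq M(B_{0}\otimes A)$, and $\delta_{B_{0}}:=\delta_{B}|_{B_{0}}$ is a strict $\ast$-homomorphism inheriting coassociativity and $\delta_{B_{0}}(B_{0})(1\otimes A)\subseteq B_{0}\otimes A$ from $\delta_{B}$; so it is a coaction, $\iota$ remains an embedding and a weak morphism for it, and $(B_{0},\delta_{B_{0}},\iota)$ is minimal by construction.

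For part (2), the identity $(\omega\triangleright\iota(c))^{*}=\bar\omega\triangleright\iota(c^{*})$, with $\bar\omega(x)=\overline{\omega(x^{*})}$, shows that $D:=[A^{*}\triangleright\iota(C)]$ is $\ast$-closed. The weak morphism property yields the ideal relations
\[
  (\omega\triangleright\iota(c))\,\iota(c')=\iota\big((\omega\triangleright c)c'\big), \qquad \iota(c)\,(\omega\triangleright\iota(c'))=\iota\big(c(\omega\triangleright c')\big),
\]
so $(A^{*}\triangleright\iota(C))\iota(C)\subseteq\iota(C)$ and $\iota(C)(A^{*}\triangleright\iota(C))\subseteq\iota(C)$. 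The crux is that $D$ is closed under products. Writing the product as the $\omega\otimes\omega'$-slice of $\delta_{B}(\iota(c))_{12}\delta_{B}(\iota(c'))_{13}$, I observe that every $\upsilon'\otimes\omega'$-slice of $(\delta_{B}(\iota(c))\otimes1)(\id_{B}\otimes\Delta)\delta_{B}(\iota(c'))$ equals, by coassociativity of $\delta_{B}$, the element $\upsilon'\triangleright\big(\iota(c)(\omega'\triangleright\iota(c'))\big)$, which lies in $D$ because $\iota(c)(\omega'\triangleright\iota(c'))\in\iota(C)$. Hence product-closure of $D$ reduces to identifying the two families of slices, i.e.\ to proving
\begin{multline*}
  [(\id\otimes A^{*}\otimes A^{*})(\delta_{B}(\iota(C))_{12}\,\delta_{B}(\iota(C))_{13})]\\
  \subseteq[(\id\otimes A^{*}\otimes A^{*})((\delta_{B}(\iota(C))\otimes1)(\id_{B}\otimes\Delta)\delta_{B}(\iota(C)))].
\end{multline*}

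This last inclusion, which amounts to trading the functional $\omega\otimes\omega'$ on the two $A$-legs for functionals paired against $(\,\cdot\,\otimes1)\Delta(\,\cdot\,)$, is the main obstacle. In the compact case it is the cancellation identity $[(A\otimes1)\Delta(A)]=A\otimes A$ combined with the antipode; for a general regular $C^{*}$-quantum group there is no bounded antipode, and I would instead argue exactly as in the proof of Proposition \ref{proposition:regular-weak-strong}: represent the offending $A$-leg through an anti-Heisenberg pair $(\pi,\hat\pi)$ and push the comultiplication past the unitary $V$ of \eqref{eq:right-unitary} by means of \eqref{eq:anti-heisenberg} and the regularity relation \eqref{eq:weakly-regular}, with weak continuity of $\delta_{C}$ ensuring that the resulting closed spans exhaust $D$. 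Granting this, $DD\subseteq D$, and together with $\ast$-closure this makes $D$ a $C^{*}$-algebra.

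For the second statement of part (2), assume $\iota$ strict. Then $\iota(1_{C})\in M(B)$ is the unit of $\iota(C)$, and the first ideal relation with $c'=1_{C}$ gives $(\omega\triangleright\iota(c))\iota(1_{C})=\iota(\omega\triangleright c)$; hence $[(A^{*}\triangleright\iota(C))\iota(1_{C})]=[\iota(A^{*}\triangleright C)]=\iota(C)$ by weak continuity of $\delta_{C}$. Therefore $E:=[(A^{*}\triangleright\iota(C))(\C 1_{B}+\C\iota(1_{C}))]=D+\iota(C)$. Using $DD\subseteq D$ together with the ideal relations $D\iota(C)\subseteq\iota(C)$ and $\iota(C)D\subseteq\iota(C)$ one checks $EE\subseteq E$, and $E$ is $\ast$-closed; so $E$ is a $C^{*}$-subalgebra of $B_{0}$ that contains both generating sets $\iota(C)$ and $A^{*}\triangleright\iota(C)$, whence $E=B_{0}$.
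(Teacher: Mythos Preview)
Your proposal is correct and follows the same approach as the paper: the slice map property applied to generators for part~(1), and the anti-Heisenberg pair manipulation of Proposition~\ref{proposition:regular-weak-strong} for part~(2). The only organisational difference is that the paper establishes $[A^{*}\triangleright\iota(C)]=[(A^{*}\triangleright\iota(C))(A^{*}\triangleright\iota(C))]$ directly as a single chain of equalities starting from the weak continuity identity $C=[C(A^{*}\triangleright C)]$, whereas you first reduce to the inclusion of slice families and then invoke the same unitary trick; in particular, weak continuity enters in the paper at the very first step rather than at the end as you suggest.
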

\begin{proof}
(1) To prove the first assertion, we only need to show that
\begin{align*}
  \delta_{B}(\iota(C)) (1_{B} \otimes A) \subseteq B_{0} \otimes A \quad \text{and} \quad
  \delta_{B}(A^{*} \triangleright \iota(C)) (1_{B} \otimes A) \subseteq  B_{0} \otimes A.
\end{align*}
But for all $c\in C$, $\upsilon,\omega \in A^{*}$,  both
$(\id \otimes \omega)(\delta_{B}(\iota(c))) = \omega \triangleright \iota(c)$ and
$(\id \otimes \omega)(\delta_{B}(\upsilon\triangleright \iota(c))) = \omega\upsilon \triangleright \iota(c)$
lie in $B_{0}$. Since $A$ has the slice map property, the desired inclusions follow.

(2) We  follow the proof of \cite[Proposition 5.7]{baaj:mu-semi-regular}, 
 using the same notation  and manipulations as in the proof of Proposition \ref{proposition:regular-weak-strong}. To shorten the notation,  let $U:=(\pi \otimes \id_{\hat\pi(\hat{A})})(V)$ and $\delta_{\pi}:=(\id_{B} \otimes \pi)\circ \delta_{B} \circ \iota$. Then by \eqref{eq:delta-act},
    \begin{align*}
       [A^{*} \triangleright \iota(C)] &= [A^{*} \triangleright \iota(C(A^{*} \triangleright C))] \\
&=[(\id_{B} \otimes \upsilon\otimes \omega)((\delta_{B} \otimes \id_{A})((C \otimes 1_{A})\delta_{C}(C))) : \upsilon,\omega \in A^{*}] \\
&=[(\id_{B} \otimes \upsilon\otimes \omega)((\delta_{B} \otimes \id_{A})((C \otimes 1_{A})\delta_{B}(C))) : \upsilon,\omega \in A^{*}] \\
&= [(\id_{B} \otimes \upsilon\circ \pi \otimes \omega\circ \pi)((\delta_{B}(C) \otimes 1_{A})(\id_{B} \otimes \Delta)\delta_{B}(C))  : \upsilon,\omega \in \mathcal{B}(K)_{*}] \\
&= [(\id_{B} \otimes \upsilon \otimes \omega)(\delta_{\pi}(C)_{12}U_{23}  \delta_{\pi}(C)_{13}U_{23}^{*})  : \upsilon,\omega \in \mathcal{B}(K)_{*}]\\
&= [(\id_{B} \otimes \upsilon \otimes \omega)(\delta_{\pi}(C)_{12}U_{23} \delta_{\pi}(C)_{13}(\pi(A) \otimes \hat\pi(\hat{A}))_{23})  : \upsilon,\omega \in \mathcal{B}(K)_{*}] \\
&= [(\id_{B} \otimes \upsilon \otimes \omega)(\delta_{\pi}(C)_{12}(\pi(A) \otimes \hat\pi(\hat{A}))_{23} \delta_{\pi}(C)_{13})  : \upsilon,\omega \in \mathcal{B}(K)_{*}]  \\
&= [(A^{*} \triangleright \iota(C))(A^{*} \triangleright \iota(C))].  
    \end{align*}
Thus, $[A^{*} \triangleright \iota(C)]$ is a $C^{*}$-algebra.  If $\iota$ is strict so that $\iota(1_{C})$ is well-defined, then this $C^{*}$-algebra commutes with $\iota(1_{C})$, and by \eqref{eq:morphism-act} the product is $[\iota(A^{*} \triangleright C)] = \iota(C)$. This proves the last assertion concerning $B_{0}$.
\end{proof}

If we apply Proposition \ref{proposition:dilation-minimal} to the canonical dilation  $(C\boxtimes A, \id_{C} \otimes \Delta, \delta_{C})$ constructed 
in  Proposition \ref{proposition:dilation-canonical}, we obtain the following dilation:
\begin{theorem} \label{theorem:globalization}
Let $(A,\Delta)$ be a $C^{*}$-bialgebra, where $A$ has the slice map property, and let $\delta_{C}$ be an injective, regular partial coaction of $(A,\Delta)$ on a $C^{*}$-algebra $C$. Denote by $\Glob(C) \subseteq M(C \otimes A)$  the $C^{*}$-subalgebra generated by 
\begin{align*}
  \{ (\id_{C}  \otimes \id_{C} \otimes  \omega)(\id_{C} \otimes \Delta)\delta_{C}(c) :\omega \in A^{*}, c\in C\} \quad \text{and} \quad \delta_{C}(C).
\end{align*}
Then $\id_{C} \otimes \Delta$ restricts to a partial coaction on $\Glob(C)$ and
\begin{align*}
  \Glob(\delta_{C}):=(\Glob(C),\id_{C} \otimes \Delta,\delta_{C})
\end{align*}
 is a minimal dilation of $\delta_{C}$. 
\end{theorem}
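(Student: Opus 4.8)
The plan is to recognise $\Glob(C)$ as the minimal dilation extracted from the canonical dilation constructed in the previous section, so that the theorem follows by stringing together Propositions \ref{proposition:dilation-canonical} and \ref{proposition:dilation-minimal}. First I would invoke Proposition \ref{proposition:dilation-canonical}: since $\delta_{C}$ is injective and regular and $A$ has the slice map property, all of its hypotheses are met, so $(C\boxtimes A,\id_{C}\otimes\Delta,\delta_{C})$ is a dilation of $\delta_{C}$, with $\id_{C}\otimes\Delta$ a genuine coaction on $C\boxtimes A$ and with $\iota=\delta_{C}\colon C\to C\boxtimes A\subseteq M(C\otimes A)$ the embedding (injectivity of $\delta_{C}$ is what makes $\iota$ an honest embedding).

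Next I would unfold the $\triangleright$-notation for this particular dilation in order to match the generating sets appearing in the statement. For $\omega\in A^{*}$ and $c\in C$, applying the definition of $\triangleright$ to the coaction $\id_{C}\otimes\Delta$ and the element $\iota(c)=\delta_{C}(c)$ gives
\begin{align*}
  \omega\triangleright\iota(c)=(\id_{C}\otimes\id_{A}\otimes\omega)(\id_{C}\otimes\Delta)(\delta_{C}(c)),
\end{align*}
which is exactly the first family of generators listed in the theorem. Consequently the $C^{*}$-subalgebra of $C\boxtimes A$ generated by $\iota(C)=\delta_{C}(C)$ together with $A^{*}\triangleright\iota(C)$ coincides with $\Glob(C)$.

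Finally I would apply Proposition \ref{proposition:dilation-minimal}(1) to the dilation $(C\boxtimes A,\id_{C}\otimes\Delta,\delta_{C})$. As $A$ has the slice map property, that proposition asserts precisely that the $C^{*}$-subalgebra $B_{0}$ generated by $\iota(C)$ and $A^{*}\triangleright\iota(C)$ is invariant, in the sense that $\id_{C}\otimes\Delta$ restricts to a coaction on it, and that $(B_{0},\id_{C}\otimes\Delta,\delta_{C})$ is a minimal dilation of $\delta_{C}$. By the identification of the previous paragraph, $B_{0}=\Glob(C)$, so this is exactly the claimed statement (the restricted coaction being in particular a partial coaction).

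The argument is therefore essentially bookkeeping: all the analytic content --- the slice-map closure arguments and the regularity manipulations with the anti-Heisenberg unitary --- has already been absorbed into the two cited propositions. The only points deserving care are that injectivity of $\delta_{C}$ is used to make $\iota=\delta_{C}$ an embedding, and that the generating family written with $\id_{C}\otimes\id_{C}\otimes\omega$ in the statement must be read as slicing the \emph{last} tensor leg, i.e.\ as $(\id_{C}\otimes\id_{A}\otimes\omega)(\id_{C}\otimes\Delta)\delta_{C}(c)$, so that it genuinely equals $A^{*}\triangleright\delta_{C}(C)$. I do not expect any genuine obstacle beyond confirming this identification of the two generating sets.
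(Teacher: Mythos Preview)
Your argument is correct and matches the paper's intended approach exactly: the sentence immediately preceding Theorem~\ref{theorem:globalization} in the paper says precisely that the result is obtained by applying Proposition~\ref{proposition:dilation-minimal} to the canonical dilation $(C\boxtimes A,\id_{C}\otimes\Delta,\delta_{C})$ of Proposition~\ref{proposition:dilation-canonical}, and your proof carries this out cleanly, including the identification $\omega\triangleright\iota(c)=(\id_{C}\otimes\id_{A}\otimes\omega)(\id_{C}\otimes\Delta)\delta_{C}(c)$. The paper's written proof re-verifies the slice-map closure directly on the generators of $\Glob(C)$ rather than quoting the two propositions, but this is just a terser repetition of the same computation, not a different idea.
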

\begin{proof}
By a similar argument as in the proof of Proposition \ref{proposition:dilation-canonical},
  we only need to show that for every $c\in C$ and $\upsilon,\omega \in A^{*}$, the elements
  \begin{align*}
(\id_{C} \otimes \id_{A} \otimes \upsilon) ( (\id_{C} \otimes \Delta)\delta_{C}(c))
\end{align*}
and
\begin{align*}
(\id_{C} \otimes \id_{A} \otimes \upsilon)(    (\id_{C} \otimes \Delta)((\id_{C} \otimes \id_{A} \otimes \omega)(\id_{C} \otimes \Delta)\delta_{C}(c)))
  \end{align*}
lie in $\Glob(C)$. In  the first case, this is trivially true, and in the second case, one finds that the element is equal to
$d = ( \id_{C} \otimes \id_{A} \otimes \upsilon\omega) ((\id_{C} \otimes \Delta)\delta_{C}(C)) \in \Glob(C)$. 
\end{proof}
\begin{remark} Suppose that $(A,\Delta)$ and $\delta_{C}$ are as above.
  \begin{enumerate}
  \item Beware that $\delta_{C}$ is strict as a map from $C$ to $M(C\otimes A)$, but this does not imply that $\delta_{C}$ is strict as a map from $C$ to $\Glob(C)$.
  \item If $\delta_{C}$ is weakly continuous, then \eqref{eq:delta-act} implies that $\Glob(C) \subseteq M(C\otimes A)$ is equal to the 
$C^{*}$-subalgebra generated by
$  \{ (\id_{C}  \otimes \id_{C} \otimes  \omega)(\id_{C} \otimes \Delta)\delta_{C}(c) :\omega \in A^{*}, c\in C\}$ and $\delta_{C}(1_{C})$.
  \end{enumerate}
\end{remark}
\begin{example}  [Case of a partial group action] \label{example:globalization-group-2} Consider the partial coaction $\delta_{C}$ associated to a disconnected partial action $((p_{g}),(\theta_{g})_{g})$ of a discrete group $\Gamma$ on a $C^{*}$-algebra $C$, and identify $M(C\otimes C_{0}(\Gamma))$ with $C_{b}(\Gamma;M(C))$.  In that case, $\Glob(C)$ is the $C^{*}$-algebra generated by all functions of the form
  \begin{align*}
    f_{c,h} = (\id_{C} \otimes \id_{C_{0}(\Gamma)} \otimes \ev_{h})(\id_{C} \otimes \Delta)\delta_{C}(c) \colon  g\mapsto  \theta_{gh}(p_{h^{-1}g^{-1}}c),
  \end{align*}
where $c\in C$ and $g,h\in \Gamma$. The action $\rho$ of $\Gamma$ corresponding to the coaction $\id_{C} \otimes \Delta$ is given by right translation of functions, whence $\rho_{h'} (f_{c,h}) = f_{c,h'h}$ for all $h'\in \Gamma$.
\end{example}
We shall use the following notion of a morphism between dilations:
\begin{definition}
Let  $\delta_{C}$ be a partial coaction of a $C^{*}$-bialgebra $(A,\Delta)$ on some $C^{*}$-algebra $C$. A morphism between dilations $\mathcal{B}=(B,\delta_{B},\iota^{B})$ and $\mathcal{D}=(D,\delta_{D},\iota^{D})$ of $\delta_{C}$ is a $*$-homomorphism $\phi \colon B \to D$ satisfying
\begin{align} \label{eq:dilation-morphism}
  \phi(\iota^{B}(c)) = \iota^{D}(c) \quad \text{and} \quad \delta_{D}(\phi(b))(1_{D} \otimes a) = (\phi \otimes \id_{A})(\delta_{B}(b)(1_{B}\otimes a))
\end{align}
for all $c\in C$, $b\in B$ and $a\in A$. Evidently, all dilations of a fixed partial coaction $\delta_{C}$ form a category; we denote this category by $\Dil(\delta_{C})$.
\end{definition}
\begin{remark}
  The second equation in \eqref{eq:dilation-morphism} is equivalent to the condition that $\phi$ is a morphism of left $A^{*}$-modules, that is, $\omega \triangleright \phi(b) =\phi(\omega \triangleright b)$ for all $b\in B$ and $\omega \in A^{*}$.
\end{remark}

 If $\delta_{C}$ is injective and regular, then the dilation $\Glob(\delta_{C})$ is  terminal among the minimal ones:
 \begin{proposition} \label{proposition:dilation-canonical-morphism} Let $\delta_{C}$ be an injective, regular partial coaction of a $C^{*}$-bialgebra $(A,\Delta)$ on a $C^{*}$-algebra $C$, let $\mathcal{B}=(B,\delta_{B},\iota)$ be a minimal dilation of $\delta_{C}$, and suppose that $A$ has the slice map property.   Then there exists a unique morphism $\phi_{\mathcal{B}}$ from $\mathcal{B}$ to $\Glob(\delta_{C})$, and on the level of $C^{*}$-algebras, $\phi_{\mathcal{B}}$ is surjective.
  For each $b\in B$, the image $\phi_{\mathcal{B}}(b)$ is the restriction of $\delta_{B}(b)$ to the ideal $\iota(C)\otimes A \cong C\otimes A$ in $B\otimes A$.
\end{proposition}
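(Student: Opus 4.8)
The plan is to build $\phi_{\mathcal{B}}$ as the composition of $\delta_{B}$ with the restriction of multipliers to a suitable ideal, and then to verify the two conditions \eqref{eq:dilation-morphism} defining a morphism of dilations. Since $\mathcal{B}$ is minimal, Remark~\ref{remark:dilation-ideal} shows that $\iota(C)$ is an ideal of $B$, so $\iota(C)\otimes A$ is an ideal of $B\otimes A$; as $\iota$ is injective, this ideal is canonically isomorphic to $C\otimes A$. Restriction of multipliers to an ideal is a $*$-homomorphism, so I obtain a $*$-homomorphism $r\colon M(B\otimes A)\to M(C\otimes A)$ and set $\phi_{\mathcal{B}}:=r\circ\delta_{B}$. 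By construction this is a $*$-homomorphism and $\phi_{\mathcal{B}}(b)$ is precisely the restriction of $\delta_{B}(b)$ to $\iota(C)\otimes A\cong C\otimes A$, which is the asserted formula.

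First I would check that $\phi_{\mathcal{B}}(\iota(c))=\delta_{C}(c)$ for all $c\in C$: testing against $c'\otimes a$ and using that $\iota$ is a weak morphism (Definition~\ref{definition:dilation}), one has $\delta_{B}(\iota(c))(\iota(c')\otimes a)=(\iota\otimes\id_{A})(\delta_{C}(c)(c'\otimes a))$, which under the identification $\iota(C)\otimes A\cong C\otimes A$ says exactly that $r(\delta_{B}(\iota(c)))$ acts on $C\otimes A$ as $\delta_{C}(c)$; in particular this is the embedding of $\Glob(\delta_{C})$ and lies in $\Glob(C)$. The crux is then the compatibility with the coactions, which by the remark following Definition~\ref{definition:morphism} is equivalent to the identity $\phi_{\mathcal{B}}(\omega\triangleright b)=\omega\triangleright\phi_{\mathcal{B}}(b)$ for all $\omega\in A^{*}$ and $b\in B$, where on the right the action is formed with the coaction $\id_{C}\otimes\Delta$ on $\Glob(C)$. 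Since $\delta_{B}$ is a genuine (coassociative) coaction, strictness of $\delta_{B}$ together with coassociativity yields $\delta_{B}(\omega\triangleright b)=(\id_{B}\otimes(\id_{A}\otimes\omega)\Delta)(\delta_{B}(b))$, exactly as in \eqref{eq:delta-act}.

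It then remains to show that the restriction map $r$ intertwines the operations acting on the $A$-legs, namely $(\id_{C}\otimes\Delta)\circ r=r^{(2)}\circ(\id_{B}\otimes\Delta)$ and $(\id_{C}\otimes\id_{A}\otimes\omega)\circ r^{(2)}=r\circ(\id_{B}\otimes\id_{A}\otimes\omega)$, where $r^{(2)}\colon M(B\otimes A\otimes A)\to M(C\otimes A\otimes A)$ is the analogous restriction to the ideal $\iota(C)\otimes A\otimes A$. Both are equalities of maps acting on disjoint tensor legs, restriction in the $B$-leg against $\Delta$ or $\omega$ on the $A$-legs, so they can be checked on elementary tensors and extended by strict continuity, using Cohen factorisation to replace $\omega$ by honest multiplications and the slice map property to pass multiplier computations through the minimal tensor products. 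Combining these intertwiners with the displayed formula for $\delta_{B}(\omega\triangleright b)$ gives $\phi_{\mathcal{B}}(\omega\triangleright b)=r(\delta_{B}(\omega\triangleright b))=\omega\triangleright r(\delta_{B}(b))=\omega\triangleright\phi_{\mathcal{B}}(b)$. I expect verifying these two intertwining relations, keeping careful track of which leg each operation touches and justifying the manipulations at the level of multipliers, to be the main obstacle.

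Finally, the module property gives $\phi_{\mathcal{B}}(\omega\triangleright\iota(c))=\omega\triangleright\delta_{C}(c)=(\id_{C}\otimes\id_{A}\otimes\omega)(\id_{C}\otimes\Delta)\delta_{C}(c)$, which is a generator of $\Glob(C)$ in the sense of Theorem~\ref{theorem:globalization}. As $B$ is generated as a $C^{*}$-algebra by $\iota(C)$ and $A^{*}\triangleright\iota(C)$ (minimality) and $\phi_{\mathcal{B}}$ is a $*$-homomorphism, $\phi_{\mathcal{B}}(B)\subseteq\Glob(C)$; moreover the images of these generators are exactly the two generating families of $\Glob(C)$, so the closed image $\phi_{\mathcal{B}}(B)$ equals $\Glob(C)$, proving surjectivity. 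For uniqueness, any morphism of dilations $\psi$ from $\mathcal{B}$ to $\Glob(\delta_{C})$ satisfies $\psi(\iota(c))=\delta_{C}(c)$ and $\psi(\omega\triangleright b)=\omega\triangleright\psi(b)$, hence is determined on $\iota(C)$ and on $A^{*}\triangleright\iota(C)$ and therefore, being a $*$-homomorphism, on all of $B$; thus $\psi=\phi_{\mathcal{B}}$. The closing assertion about $\phi_{\mathcal{B}}(b)$ being the restriction of $\delta_{B}(b)$ is then just the definition $\phi_{\mathcal{B}}=r\circ\delta_{B}$.
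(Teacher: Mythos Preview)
Your proposal is correct and follows essentially the same approach as the paper: define $\phi_{\mathcal{B}}$ as the restriction of $\delta_{B}$ to the ideal $\iota(C)\otimes A$, verify $\phi_{\mathcal{B}}\circ\iota=\delta_{C}$ via the weak morphism property, deduce the intertwining with the coactions from coassociativity of $\delta_{B}$ (the paper checks the relation $(\phi_{\mathcal{B}}\otimes\id_{A})(\delta_{B}(b)(b'\otimes a))=(\id_{C}\otimes\Delta)(\phi_{\mathcal{B}}(b))(\phi_{\mathcal{B}}(b')\otimes a)$ directly, you use the equivalent $A^{*}$-module formulation), and conclude surjectivity and uniqueness from minimality and the description of the generators of $\Glob(C)$. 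One small remark: the slice map property is not actually needed for the intertwining relations you mention, since restriction to an ideal in the $B$-leg automatically commutes with $\id\otimes\Delta$ and with slicing on the $A$-legs; the hypothesis on $A$ enters only to ensure that $\Glob(\delta_{C})$ is well-defined via Theorem~\ref{theorem:globalization}.
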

\begin{proof}
 Uniqueness follows from the fact that $B$ is generated by $\iota(C)$ and $A^{*} \triangleright \iota(C)$.

To prove existence,  define $\phi_{\mathcal{B}}$  as in (3).  Since $\iota$ is a weak morphism, $\phi_{\mathcal{B}} \circ \iota =\delta_{C}$. The relation  $(\id_{B} \otimes \Delta)\delta_{B} = (\delta_{B} \otimes \id_{A})\delta_{B}$ implies that 
 \begin{align*}
   (\phi_{\mathcal{B}} \otimes \id_{A})(\delta_{B}(b)(b' \otimes a)) =  (\id_{C} \otimes \Delta)(\phi_{\mathcal{B}}(b))(\phi_{\mathcal{B}}(b') \otimes a)
 \end{align*}
for all $b,b'\in B$ and $a\in A$; in particular,
\begin{align*}
\phi_{\mathcal{B}}  (\omega \triangleright \iota(c)) \phi_{\mathcal{B}}(b) = (\id_{C} \otimes \id_{A} \otimes \omega)((\id_{C} \otimes \Delta)\delta_{C}(C))  \phi_{\mathcal{B}}(b)
\end{align*}
for all $c \in C$ and $\omega \in C^{*}$. Now, the definition of $\Glob(C)$ and minimality of $\mathcal{B}$ imply $\phi_{\mathcal{B}}(B)=\Glob(C)$.\end{proof}
If $(A,\Delta)$ is a $C^{*}$-quantum group, then the  morphism   above is injective and hence an isomorphism. To show this, we use the following observation:
\begin{lemma} 
  Let $\delta_{B}$ be a coaction of a $C^{*}$-quantum group $(A,\Delta)$ on a $C^{*}$-algebra $B$ and let $b,b' \in M(B)$. Then $\delta_{B}(b)(b'\otimes 1_{A}) = 0$ if and only if $(b\otimes 1_{A})\delta_{B}(b') = 0$.
\end{lemma}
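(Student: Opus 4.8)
The plan is to reduce to a single implication, pass to a faithful representation of $A$, and then use the multiplicative unitary to move the vanishing relation from one side to the other.

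\emph{Reduction.} Taking adjoints shows the asserted equivalence is symmetric: $\delta_{B}(b)(b'\otimes 1_{A})=0$ is adjoint to $(b'^{*}\otimes 1_{A})\delta_{B}(b^{*})=0$, i.e.\ to the second relation for the pair $(b'^{*},b^{*})$, and likewise $(b\otimes 1_{A})\delta_{B}(b')=0$ is adjoint to $\delta_{B}(b'^{*})(b^{*}\otimes 1_{A})=0$. Hence it suffices to prove the implication $\delta_{B}(b)(b'\otimes 1_{A})=0\Rightarrow(b\otimes 1_{A})\delta_{B}(b')=0$ for all $b,b'$; the converse follows by applying it to $(b'^{*},b^{*})$ and taking adjoints. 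Next I would fix an anti-Heisenberg pair $(\pi,\hat\pi)$ on a Hilbert space $K$ with the unitary $V$ of \eqref{eq:right-unitary}, and set $\delta_{\pi}:=(\id_{B}\otimes\pi)\circ\delta_{B}$. Since $\pi$ is faithful and nondegenerate, $\id_{B}\otimes\pi$ is injective on $M(B\otimes A)$, so the hypothesis becomes $\delta_{\pi}(b)(b'\otimes 1)=0$ and the goal becomes $(b\otimes 1)\delta_{\pi}(b')=0$ in $M(B\otimes\mathcal B(K))$.

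\emph{Implementation identity.} Applying $\id_{B}\otimes\id_{A}\otimes\pi$ to the coassociativity relation $(\delta_{B}\otimes\id_{A})\delta_{B}=(\id_{B}\otimes\Delta)\delta_{B}$ and using the anti-Heisenberg relation \eqref{eq:anti-heisenberg} gives
\[ (\delta_{B}\otimes\id_{\mathcal B(K)})(\delta_{\pi}(b))=V_{23}\,\delta_{\pi}(b)_{13}\,V_{23}^{*} \]
in $M(B\otimes A\otimes\mathcal B(K))$ for every $b\in M(B)$. This operator-theoretic form of the coaction is what will carry the argument.

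\emph{The transport, and the main obstacle.} The heart of the matter is to convert a vanishing with $b'$ on the right of $\delta_{\pi}(b)$ into one with $b'$ fed through the coaction on the right of $b$. The guiding model is $A=C_{0}(\Gamma)$, where the coaction is an action $\alpha$, the hypothesis reads $\alpha_{g}(b)b'=0$ for all $g$, and applying the inverse automorphism $\alpha_{g^{-1}}$ yields $b\,\alpha_{g^{-1}}(b')=0$, which is the claim after the reindexing $g\mapsto g^{-1}$. The quantum counterpart of this use of group inversion is the invertibility of $V$, the antipodal information being encoded in the multiplicative unitary; concretely I would combine the implementation identity with the corepresentation property $(\Delta\otimes\id)(V)=V_{13}V_{23}$ and the unitarity of $V$ to transport the annihilator, or, equivalently, construct the canonical bijection $T\colon B\otimes A\to B\otimes A$ determined by $T(\delta_{B}(b)(1\otimes a))=b\otimes a$, apply it to $\delta_{B}(b)(b'\otimes a)=0$, and read off $(b\otimes 1_{A})\delta_{B}(b')=0$; injectivity of $\id_{B}\otimes\pi$ then returns the statement for $\delta_{B}$. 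The main obstacle is exactly this step: coassociativity by itself only relates the two sides up to conjugation by $V$, and that conjugation cancels and yields no new information, so genuinely moving $b'$ across requires the invertibility (antipode) carried by the manageable multiplicative unitary, and the delicate point is to make the ``apply $\alpha_{g^{-1}}$'' move rigorous at the operator level in the absence of a bounded counit or antipode, working only through $V$.
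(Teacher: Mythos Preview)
Your setup is sound up to and including the implementation identity, and your own diagnosis of the gap is honest: you do not have a way to ``move $b'$ across.'' The trouble is that the device you propose for this---the canonical map $T$ with $T(\delta_{B}(b)(1\otimes a))=b\otimes a$---is the inverse of the Galois map $b\otimes a\mapsto\delta_{B}(b)(1\otimes a)$, and for an arbitrary coaction this map is not bijective, so $T$ need not exist. Likewise, a bare appeal to ``invertibility of $V$'' does not help: conjugation by $V_{23}$ in your identity simply cancels, as you noticed.

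The paper avoids this entirely by not trying to transform one relation into the other. Instead, it shows that \emph{both} are equivalent to the single symmetric condition
\[
\delta_{B}(b)\,(1_{B}\otimes\hat A)\,\delta_{B}(b')=0.
\]
Concretely: apply $\delta_{B}\otimes\id_{A}$ (injective) to $\delta_{B}(b)(b'\otimes 1_{A})$, use coassociativity and the formula $\Delta(a)=W(a\otimes 1)W^{*}$ for a modular multiplicative unitary $W$ to rewrite this as $W_{23}(\delta_{B}(b)\otimes 1)W_{23}^{*}(\delta_{B}(b')\otimes 1)$; hence $\delta_{B}(b)(b'\otimes 1_{A})=0$ iff $(\delta_{B}(b)\otimes 1)W_{23}^{*}(\delta_{B}(b')\otimes 1)=0$. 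The same manipulation applied to $(b\otimes 1_{A})\delta_{B}(b')$ yields the equivalence with $(\delta_{B}(b)\otimes 1)W_{23}(\delta_{B}(b')\otimes 1)=0$. Now the key observation: the right slices of $W$ and of $W^{*}$ both span $\hat A$ (the latter because $\hat A$ is $*$-closed), so each of these vanishing conditions is equivalent to $\delta_{B}(b)(1_{B}\otimes\hat A)\delta_{B}(b')=0$. This is the missing idea---no antipode, no Galois map, just the dual algebra $\hat A$ as a common meeting point.
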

\begin{proof}
 Choose a modular  multiplicative  unitary $W$ for $(A,\Delta)$ so that $\Delta(a)=W(a\otimes 1)W^{*}$ for all $a\in A$. Then
  \begin{align*}
  (\delta_{B}\otimes \id_{A})(\delta_{B}(b)) \cdot (\delta_{B}(b')\otimes 1_{A}) &=   (\id_{B} \otimes \Delta)(\delta_{B}(b)) \cdot (\delta_{B}(b') \otimes 1_{A}) \\ &= W_{23}(\delta_{B}(b) \otimes 1_{A})W_{23}^{*} (\delta_{B}(b') \otimes 1_{A}).
  \end{align*}
  Since $\delta_{B} \otimes \id_{A}$ is injective and $W$ is unitary, we can conclude   that  $ \delta_{B}(b)(b'\otimes 1_{A})=0$ if
  \begin{align}
    \label{eq:1}
    (\delta_{B}(b) \otimes 1_{A})  W_{23}^{*} (\delta_{B}(b') \otimes 1_{A}) = 0.
  \end{align}
 A similar  argument shows that  $(b\otimes 1_{A})\delta_{B}(b') = 0$ if and only if
  \begin{align}
    \label{eq:2}
    (\delta_{B}(b) \otimes 1_{A})  W_{23} (\delta_{B}(b') \otimes 1_{A}) = 0.
  \end{align}
Now, both \eqref{eq:1} and \eqref{eq:2} are equivalent to the condition $ \delta_{B}(b) (1_{B} \otimes \hat{A}) \delta_{B}(b')=0$.
  \end{proof}

We can now prove claim (2) stated in the introduction to this section:
\begin{proposition} \label{proposition:dilation-injective}
  Let $\delta_{C}$ be an injective, regular partial coaction of a  $C^{*}$-quantum group $(A,\Delta)$ on a $C^{*}$-algebra $C$,  suppose that $A$ has the slice map property, and let $\mathcal{B}$ be a minimal dilation of $\delta_{C}$. Then the morphism $\phi_{\mathcal{B}}$ from $\mathcal{B}$ to $\Glob(\delta_{C})$ is an isomorphism.
\end{proposition}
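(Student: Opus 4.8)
The plan is to combine the preceding two results. By Proposition \ref{proposition:dilation-canonical-morphism} the morphism $\phi_{\mathcal{B}}$ already exists and is surjective, and $\phi_{\mathcal{B}}(b)$ is the restriction of $\delta_{B}(b)$ to the ideal $\iota(C)\otimes A\cong C\otimes A$ in $B\otimes A$; so it remains only to prove that $\phi_{\mathcal{B}}$ is injective. I would study the kernel $N:=\ker\phi_{\mathcal{B}}$, which is a closed two-sided $*$-ideal of $B$ because $\phi_{\mathcal{B}}$ is a $*$-homomorphism, and aim to show $N=0$ by proving that $N$ annihilates all of $B$.

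First, since $\phi_{\mathcal{B}}\circ\iota=\delta_{C}$ and $\delta_{C}$ is injective, no nonzero element of $\iota(C)$ lies in $N$, so $N\cap\iota(C)=0$. As $\iota(C)\subseteq B$ is an ideal by Remark \ref{remark:dilation-ideal} and $N$ is an ideal, this forces $N\cdot\iota(C)\subseteq N\cap\iota(C)=0$. The heart of the argument is to propagate this annihilation from $\iota(C)$ to the generators $A^{*}\triangleright\iota(C)$. For $n\in N$, reading $\phi_{\mathcal{B}}(n)=0$ through the explicit description gives $\delta_{B}(n)(\iota(c)\otimes 1_{A})=0$ for all $c\in C$, and here the hypothesis that $(A,\Delta)$ is a $C^{*}$-quantum group enters: the preceding Lemma yields the equivalent statement $(n\otimes 1_{A})\delta_{B}(\iota(c))=0$. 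Slicing with $\omega\in A^{*}$ and using that the slice maps are left $M(B)$-module maps then gives $n\cdot(\omega\triangleright\iota(c))=(\id_{B}\otimes\omega)\big((n\otimes 1_{A})\delta_{B}(\iota(c))\big)=0$, so that $N\cdot(A^{*}\triangleright\iota(C))=0$.

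At this point $N$ annihilates the set $S:=\iota(C)\cup(A^{*}\triangleright\iota(C))$ from the left. I would check that $S$ is self-adjoint, using $(\omega\triangleright\iota(c))^{*}=\omega^{*}\triangleright\iota(c^{*})$ with $\omega^{*}(a):=\overline{\omega(a^{*})}$; since $N$ is a $*$-ideal, the right-hand annihilation $S\cdot N=0$ then follows by taking adjoints. As $S$ generates $B$ as a $C^{*}$-algebra, $B$ is the closed linear span of products of elements of $S$, and $N\cdot(s_{1}\cdots s_{k})=(Ns_{1})s_{2}\cdots s_{k}=0$, so continuity gives $N\cdot B=0$. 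In particular $N\cdot N=0$, whence $nn^{*}=0$ and $n=0$ for every $n\in N$, so $N=0$. Combined with surjectivity from Proposition \ref{proposition:dilation-canonical-morphism}, this shows that $\phi_{\mathcal{B}}$ is an isomorphism.

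The main obstacle, and the only place the full $C^{*}$-quantum group structure is used, is the propagation step, turning $\delta_{B}(n)(\iota(c)\otimes 1_{A})=0$ into $(n\otimes 1_{A})\delta_{B}(\iota(c))=0$ via the preceding Lemma; once the annihilation reaches the generating set $A^{*}\triangleright\iota(C)$, the remainder is formal $C^{*}$-algebraic bookkeeping. I would also verify carefully that ``$\phi_{\mathcal{B}}(b)=0$'' is correctly interpreted as two-sided annihilation $\delta_{B}(b)(\iota(C)\otimes A)=0=(\iota(C)\otimes A)\delta_{B}(b)$, but since $N$ is a $*$-ideal and $S$ is self-adjoint, working with one side and passing to adjoints suffices.
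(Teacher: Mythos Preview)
Your proof is correct and uses the same key ingredient as the paper, namely the preceding Lemma to pass from $\delta_{B}(n)(\iota(c)\otimes 1_{A})=0$ to $(n\otimes 1_{A})\delta_{B}(\iota(c))=0$, together with minimality of the dilation. The packaging differs slightly: the paper decomposes $B$ into the two direct summands $\iota(C)$ and $(1_{B}-\iota(1_{C}))B$, argues injectivity on the first summand directly from injectivity of $\delta_{C}$, and on the complementary summand argues by contraposition that a nonzero $b$ yields some $c$ with $\delta_{B}(\iota(c))(b\otimes 1_{A})\neq 0$, whence $(\iota(c)\otimes 1_{A})\delta_{B}(b)\neq 0$ by the Lemma, so $\phi_{\mathcal{B}}(b)\neq 0$. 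Your argument instead treats $N=\ker\phi_{\mathcal{B}}$ as an ideal and shows it annihilates the generating set $\iota(C)\cup(A^{*}\triangleright\iota(C))$, hence all of $B$. This has the minor advantage that you never invoke $\iota(1_{C})$ or the direct summand decomposition, so your argument goes through verbatim without any implicit strictness of $\iota$; the paper's formulation tacitly uses $\iota(1_{C})\in M(B)$ when splitting off the complementary summand.
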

\begin{proof}
Write $\mathcal{B}=(B,\delta_{B},\iota)$. It suffices to show that $\phi_{\mathcal{B}}$ is injective on the level of $C^{*}$-algebras. 
On the direct summand $\iota_{C}(C) \subseteq B$, the morphism $\phi_{\mathcal{B}}$ is given by $\iota_{C}(c)\mapsto \delta_{C}(c)$ and hence injective.
Since $B$ is minimal, the direct summand $(1_{B}-\iota(1_{C}))B$ of $B$ is generated by $(1_{B} - \iota(1_{C}))(A^{*} \triangleright \iota(C))$. Given a non-zero $b \in (1_{B}-\iota(1_{C}))B$, we therefore find some $c\in C$ such that $\delta_{B}(\iota(c))(b\otimes 1_{A})$ is non-zero, and then $(\iota(c) \otimes 1_{A})\delta_{B}(b)$ is non-zero by the lemma above, whence $\phi_{\mathcal{B}}(b)$ is non-zero.
\end{proof}
\begin{corollary} \label{corollary:dilation-minimal}
Let $(A,\Delta)$ be a $C^{*}$-quantum group and suppose that $A$ has the slice map property. Then all minimal dilations of an injective, regular partial coaction of $(A,\Delta)$ are isomorphic.
\end{corollary}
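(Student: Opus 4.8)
The plan is to deduce the statement immediately from the two preceding propositions, using $\Glob(\delta_{C})$ as a common reference object. Fix an injective, regular partial coaction $\delta_{C}$ of the $C^{*}$-quantum group $(A,\Delta)$ on a $C^{*}$-algebra $C$, where $A$ has the slice map property. By Theorem \ref{theorem:globalization}, the triple $\Glob(\delta_{C})=(\Glob(C),\id_{C}\otimes\Delta,\delta_{C})$ is itself a minimal dilation of $\delta_{C}$. Hence it suffices to show that every minimal dilation of $\delta_{C}$ is isomorphic to $\Glob(\delta_{C})$ in the category $\Dil(\delta_{C})$, for then any two minimal dilations are isomorphic to each other.

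First I would invoke Proposition \ref{proposition:dilation-injective}: for any minimal dilation $\mathcal{B}=(B,\delta_{B},\iota)$ of $\delta_{C}$, the canonical morphism $\phi_{\mathcal{B}}\colon \mathcal{B}\to\Glob(\delta_{C})$ supplied by Proposition \ref{proposition:dilation-canonical-morphism} is a $*$-isomorphism of $C^{*}$-algebras. The only point left to check is that $\phi_{\mathcal{B}}$ is an isomorphism inside $\Dil(\delta_{C})$, that is, that its inverse is again a morphism of dilations. This is purely formal: applying $\phi_{\mathcal{B}}^{-1}$, respectively $\phi_{\mathcal{B}}^{-1}\otimes\id_{A}$, to the two defining relations in \eqref{eq:dilation-morphism} for $\phi_{\mathcal{B}}$ yields the corresponding relations for $\phi_{\mathcal{B}}^{-1}$ with the roles of $\mathcal{B}$ and $\Glob(\delta_{C})$ interchanged, so $\phi_{\mathcal{B}}^{-1}$ is a morphism from $\Glob(\delta_{C})$ to $\mathcal{B}$.

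Finally, given two minimal dilations $\mathcal{B}_{1}$ and $\mathcal{B}_{2}$ of $\delta_{C}$, both $\phi_{\mathcal{B}_{1}}$ and $\phi_{\mathcal{B}_{2}}$ are isomorphisms onto $\Glob(\delta_{C})$ by the above, so the composition $\phi_{\mathcal{B}_{2}}^{-1}\circ\phi_{\mathcal{B}_{1}}\colon \mathcal{B}_{1}\to\mathcal{B}_{2}$ is the desired isomorphism of dilations. Since the hypotheses of the corollary match exactly those of Proposition \ref{proposition:dilation-injective} --- injectivity and regularity of $\delta_{C}$, the slice map property of $A$, and that $(A,\Delta)$ is a $C^{*}$-quantum group --- no further work is needed. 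The substance of the result is carried entirely by Propositions \ref{proposition:dilation-canonical-morphism} and \ref{proposition:dilation-injective}; I do not expect any genuine obstacle here, the only mildly nontrivial observation being the formal verification that the inverse of a dilation isomorphism is again a dilation morphism.
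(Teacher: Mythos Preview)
Your proposal is correct and matches the paper's approach: the corollary is stated without proof, as an immediate consequence of Propositions \ref{proposition:dilation-canonical-morphism} and \ref{proposition:dilation-injective}, which is exactly what you spell out. Your extra remark that the inverse of a bijective dilation morphism is again a dilation morphism is a harmless formal check that the paper leaves implicit.
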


\section*{Acknowledgements} 
We  thank the referees for helpful comments that improved the quality of the manuscript. 
\bibliographystyle{amsplain}

\def\cprime{$'$}
\providecommand{\bysame}{\leavevmode\hbox to3em{\hrulefill}\thinspace}
\renewcommand{\MR}[1]{}
\providecommand{\MRhref}[2]{%
}
\providecommand{\href}[2]{#2}

\end{document}